\documentclass[12pt,oneside]{amsart}
\usepackage[margin=1in]{geometry}
\usepackage{enumitem,verbatim}
\usepackage{microtype}
\allowdisplaybreaks

\usepackage{stmaryrd}
\usepackage[export]{adjustbox}
\usepackage{amsmath,amssymb,amsthm}
\usepackage{graphicx}
\usepackage[hyphens]{url}
\usepackage[pdfusetitle]{hyperref}
\usepackage[nameinlink]{cleveref}
\usepackage{color}
\hypersetup{colorlinks,linkcolor=blue,citecolor=cyan}
\usepackage{physics}
\usepackage{tikz-cd, tikz-3dplot}
\tikzset{anchorbase/.style={baseline={([yshift=-0.5ex]current bounding box.center)}}}
\tikzstyle directed=[postaction={decorate,decoration={markings,
    mark=at position #1 with {\arrow{>}}}}]
\tikzset{cross/.style={cross out, draw=black, minimum size=2*(#1-\pgflinewidth), inner sep=0pt, outer sep=0pt},
cross/.default={1pt}}
\usetikzlibrary{patterns, knots, calc, arrows.meta, decorations.markings,shapes.misc,math,hobby,intersections}

\usepackage{asymptote}
\usepackage{standalone}
\usepackage{subcaption}
\usepackage{wasysym}

\newtheorem{thm}{Theorem}[section]

\newtheorem{lem}[thm]{Lemma}
\newtheorem{prop}[thm]{Proposition}
\newtheorem{conj}[thm]{Conjecture}

\theoremstyle{definition}
\newtheorem{defn}[thm]{Definition}
\newtheorem{eg}[thm]{Example}

\newtheorem{rmk}[thm]{Remark}

\title{On the length conjecture for twist knots}

\author[S. Panitch]{Samuel Panitch}
\address{Department of Mathematics, Yale University, New Haven, CT 06511, USA}
\email{\href{mailto:sam.panitch@yale.edu}{sam.panitch@yale.edu}}

\author[M. Romo]{Mauricio Romo}
\address{Center for Mathematics and Interdisciplinary Sciences,
Fudan University, Shanghai, 200433, China}
\address{Shanghai Institute for Mathematics and Interdisciplinary Sciences (SIMIS),
Shanghai, 200433, China}
\email{\href{mailto:mromoj@simis.cn}{mromoj@simis.cn}}

\begin{document}

\begin{abstract}
Using the newly developed $3$d quantum trace map, we compile more evidence for the \textit{length conjecture}, a refinement of the all-order volume conjecture that incorporates insertions of additional links in the skein module of the knot complement. 
In particular, we prove that the length conjecture holds up to first order for an infinite family of twist knots.
Along the way, we form a conjecture that the $3$d quantum trace map behaves naturally with respect to Dehn filling.
\end{abstract}

\maketitle

\tableofcontents

\section{Introduction}

The volume conjecture, first formulated by Kashaev~\cite{Kas} and reformulated in terms of the colored Jones polynomial by Murakami and Murakami~\cite{MM}, predicts that the asymptotic growth rate of the colored Jones polynomial of a hyperbolic knot $\mathcal{K}$, evaluated at a root of unity, recovers the hyperbolic volume of its complement:
\begin{equation}
2\pi \lim_{n \to \infty} \left. \frac{\log \left| J_n\left(\mathcal{K};q\right) \right|}{n} \right|_{q = e^{2 \pi i/ n}} = \mathrm{Vol}(S^3 \setminus \mathcal{K}).
\end{equation}
This has been sharpened to an equality of complex numbers incorporating the Chern-Simons invariant \cite{MMOTY02}, and further to an all-orders statement, with the full asymptotic expansion of $\log J_n(\mathcal{K};q)|_{q=e^{2 \pi i/n}}$ as $n \to \infty$ conjectured to match the perturbative partition function of $\mathrm{SL}(2, \mathbb{C})$ Chern-Simons theory on $S^3 \setminus \mathcal{K}$, expanded around the geometric flat connection \cite{Guk05, GM08}.
See, e.g.~\cite{KT00, Oht16, Ohtsuki:2017osb,ohtsuki2018asymptotic, Mur08}, for the substantial progress since.

In order to tackle the side of the conjecture involving the perturbative expansion of the $\mathrm{SL}(2, \mathbb{C})$ Chern-Simons partition function on a hyperbolic knot complement, we need a concrete, computable model: the \emph{state integral}, a finite-dimensional integral built from an ideal triangulation of the manifold.
State integral models were developed independently by Hikami~\cite{HIKAMI_2001, Hikami:2006cv}, Andersen and Kashaev~\cite{EllegaardAndersen:2011vps}, and Dimofte and Garoufalidis~\cite{Dimofte:2011gm, Dimofte:2012qj}. 
Each assigns to a cusped hyperbolic $3$-manifold $M$ with ideal triangulation $\mathcal{T}$ a finite-dimensional integral $Z_\hbar(\mathcal{T})$ whose asymptotic expansion, around the saddle point corresponding to the complete hyperbolic structure on $M$, is a formal power series in $\hbar$ recovering the complex hyperbolic volume of $M$ at leading order and higher-loop quantum corrections at subleading orders — conjecturally matching the asymptotic expansion of the colored Jones polynomial under the identification $\hbar = 2 \pi i / n$.

The volume conjecture and its refinements can be enriched further by including additional
links inside $S^3 \setminus \mathcal{K}$.
In Chern-Simons theory, the observable attached to a link $K \subset S^3 \setminus \mathcal{K}$
is the holonomy along $K$ of an $\mathrm{SL}(2,\mathbb{C})$ flat connection on
$S^3 \setminus \mathcal{K}$, whose trace is a regular function on the character variety
\[
X_{\mathrm{SL}(2,\mathbb{C})} := \mathrm{Hom}\!\left(\pi_1(S^3 \setminus \mathcal{K}),
\mathrm{SL}(2,\mathbb{C})\right) \sslash \mathrm{SL}(2,\mathbb{C}).
\]
A natural question is: \textit{can we incorporate $K$ into the state integral
$Z_\hbar(\mathcal{T})$ and relate its asymptotics to the colored Jones polynomial of
$\mathcal{K} \cup K$, under an appropriate limit on the colors of the components?}

The quantum counterpart of the aforementioned trace function is an element of the skein module
$\mathrm{Sk}(S^3 \setminus \mathcal{K})$
\cite{przytycki2006skein,turaev1990conway,przytyckia2000skein}, which quantizes
$X_{\mathrm{SL}(2,\mathbb{C})}$.  The state integral, however, is built from an ideal
triangulation $\mathcal{T}$, which the skein module does not see; we therefore need a
dictionary between the two.  In dimension two, that dictionary is the quantum trace map of
Bonahon and Wong~\cite{BW}, embedding the Kauffman bracket skein algebra of a punctured
surface into a quantum torus.  Agarwal, Gang, Lee, and the second author~\cite{AGLR}
conjectured a $3$-dimensional analog, which was constructed independently by the first author and
Park~\cite{PP} and by Garoufalidis and Yu~\cite{GY26}.

With this $3$d quantum trace map in hand, we can state the \textit{length conjecture} informally.
Fix a hyperbolic knot $\mathcal{K} \subset S^3$ together with an auxiliary curve $K \subset S^3 \setminus \mathcal{K}$, which we think of as a ``light'' knot inside the complement of the ``heavy'' $\mathcal{K}$.
The colored Jones polynomial of the link $\mathcal{K} \cup K$, with $\mathcal{K}$ assigned color $n$ and $K$ assigned color $2$, defines a sequence indexed by $n$, which we normalize by dividing by $J_n(\mathcal{K}; q)$.
The length conjecture predicts that the asymptotic expansion of this ratio at $q = e^{2 \pi i / n}$ as $n \to \infty$ matches the perturbative expansion of the state integral on $S^3 \setminus \mathcal{K}$ with an insertion encoding the $3$d quantum trace of $K$.
The leading order of this expansion is a simple function of the complex hyperbolic length of the geodesic representative of $K$ in $S^3 \setminus \mathcal{K}$, which is the source of the conjecture's name.

The length conjecture was originally proposed by Agarwal, Gang, Lee, and the second author~\cite{AGLR}, who verified it for the figure-eight knot $\mathcal{K} = 4_1$ with $K = K_b$ (a specific link wrapping around the figure-eight) and its $2$-cabling, analytically to first order in $\hbar = 2 \pi i / n$ and numerically to second order.
Beyond this single example, the conjecture has remained untested.
Our main contribution is to verify it to first order for an infinite family of twist knots $\mathcal{K}_p$, with arbitrarily many parallel cables of an analogous curve $K_b \subset S^3 \setminus \mathcal{K}_p$.

A key obstacle is that existing ideal triangulations of twist knot complements grow in combinatorial complexity with $|p|$, making a uniform formulation of the state integral across the family difficult.
To bypass this, we use the classical fact that twist knots can be realized as Dehn fillings of the Whitehead link complement.
On the state integral side, we then use the Dehn-filled state integral developed in \cite{Gang:2017cwq}, applied to a fixed four-tetrahedron ideal triangulation $\mathcal{T}$ of the Whitehead link complement.
For the quantum trace, the curve $K_b$ lifts to a curve in the unfilled Whitehead link complement; abusing notation, we denote this lift by $K_b$ as well.
Its image under Dehn filling is isotopic to the original $K_b \subset S^3 \setminus \mathcal{K}_p$.
This lift allows us to compute insertions using the established $3$d quantum trace of \cite{PP, GY26}, without any modification.
Our main theorem is:

\begin{thm}[See Theorem~\ref{thm:main_theorem}]
\label{thm:main}
Take $p \in \mathbb{Z}$ with $p \geq 6$, let $\mathcal{K}_p$ denote the twist knot with $p$ full twists, and let $K_b \subset S^3 \setminus \mathcal{K}_p$ be the curve depicted in Figure~\ref{fig:twist_knot_curves}, regarded as a curve in the Whitehead link complement via its preimage under Dehn filling.
For every integer $l$ with $0 \leq l \leq 2|p|$, the asymptotic expansion of
\[
\frac{J_{n, 2}\!\left( \mathcal{K}_p \cup K_b^{l};\, q \right)}{J_n(\mathcal{K}_p;\, q)} \bigg|_{q = e^{2 \pi i / n}} \quad \text{as } n \to \infty
\]
agrees, to first order in $\hbar = 2 \pi i / n$, with the perturbative expansion of the Dehn-filled state integral on $\mathcal{T}$ with insertion $\Tr_{\mathcal{T}}\!\left( K_b^{l} \right)$ around the geometric saddle point.
\end{thm}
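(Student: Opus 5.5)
The plan is to compute both sides of the comparison explicitly, each to first order in $\hbar$, and check that they agree.

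\textbf{Colored Jones side.} Since $K_b$ is a light curve colored $2$ encircling the twist region, I would write $J_{n,2}(\mathcal{K}_p\cup K_b^{l};q)$ as a cyclotomic/Habiro-type sum for twist knots in which each parallel copy of $K_b$ inserts a skein-theoretic factor into the summand. The colored Jones polynomial of $\mathcal{K}_p$ has a standard Masbaum-type expansion, $\sum_k$ (twist eigenvalues)$\times$(cyclotomic terms). Encircling one strand of the twist region by a color-$2$ unknot acts diagonally on the fusion basis of the two twisted strands. Concretely, it multiplies the channel of spin $j$ by $-(q^{j+1/2}+q^{-j-1/2})$, up to normalization. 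Hence $l$ parallel copies multiply that summand by the $l$-th power of this eigenvalue.

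At $q=e^{2\pi i/n}$ I would then apply the standard saddle-point analysis of the Kashaev/colored-Jones sum. This replaces quantum factorials by Faddeev quantum dilogarithms and converts the sum into an integral whose dominant saddle is the geometric one. The insertion is a polynomial in $x=q^{k}$. The ratio by $J_n(\mathcal{K}_p;q)$ is therefore the normalized expectation of that polynomial at the saddle, to first order: the leading term is the insertion evaluated at the saddle, and the $\hbar$-correction comes from second derivatives of the insertion against the inverse Hessian, plus the cross term with the one-loop cubic vertex of the potential.

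\textbf{State integral side.} I would use the Dehn-filled state integral of \cite{Gang:2017cwq} on the four-tetrahedron triangulation $\mathcal{T}$ of the Whitehead link complement. The $3$d quantum trace $\Tr_{\mathcal{T}}(K_b)$ from \cite{PP,GY26} is a Laurent polynomial in quantum shape variables; since $K_b^{l}$ is $l$ parallel copies, its trace is a (Weyl-ordered) $l$-th power. I would compute the first-order expectation of this insertion around the geometric saddle by the same Feynman-diagram recipe: leading value, plus propagator and cubic-vertex terms. The Dehn filling by slope $-1/p$ enters only through the filled gluing equations and the modified Gaussian kernel. The leading order on both sides is $(-2\cosh(\ell/2))^{l}$, where $\ell$ is the complex length of the geodesic of $K_b$. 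Matching the leading order amounts to noting that both saddles give the same point of the character variety of $\mathcal{K}_p$ and that the classical trace is a holonomy trace.

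\textbf{Matching the first-order terms.} This is the main obstacle, because $p$ enters only implicitly through the geometric solution of the filled gluing equations, and there is no closed form in $p$. I would first try to identify the two integrals as functions of a common variable. Change variables in the state integral so that the integration variable matching $q^{k}$ is the eigenvalue of the meridian of the filled component, and integrate out the remaining Gaussian directions. The aim is to reduce both sides to a one-dimensional integral with the same potential, the same insertion, and one-loop data agreeing up to a factor independent of the insertion, which cancels in the ratio.

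The hypotheses $p\ge 6$ and $0\le l\le 2|p|$ are presumably what guarantee that this reduction works. They should ensure that the geometric saddle is the dominant contributing one and that contour deformation is allowed despite the polynomial growth $|x|^{\pm l}$ of the insertion relative to the Gaussian twist factor $q^{pk^2}$. I would verify these conditions by estimating the shape parameters for $p\ge6$ near their limit at the Whitehead link.

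If the exact reduction fails, the fallback is to express the first-order coefficient on each side as an explicit rational function of the filled shape parameters. I would then show the two agree identically modulo the filled gluing equations, treating the shape parameters as algebraic functions of $p$.
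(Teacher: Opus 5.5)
Your fallback is essentially the paper's proof. The insertion becomes the factor $\lambda_{2k}^l=(-q^{k+1/2}-q^{-k-1/2})^l$ in the Masbaum/Chen--Zhu double sum. Your eigenvalue argument reaches this more directly than the paper's $d_{i,p,l}$ bookkeeping, though $K_b$ encircles \emph{both} strands of the twist region, not one. The quantum trace of $K_b^l$ is the $l$-th power of that of $K_b$, both integrals are expanded by Feynman diagrams, and the first-order data are compared on the critical locus.

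What the proposal lacks is the step that makes this comparison possible. The Chen--Zhu coefficients are functions of their critical point $(z_1,z_2)$, and the state-integral coefficients are functions of the filled shapes $(y,x_1)$. You supply neither a dictionary between them nor an argument that it carries the Chen--Zhu saddle to the geometric one. The paper's key input is the explicit substitution $p=-u$, $z_1=y^2$, $z_2=(x_1^{-2}+y^2)/(x_1^{-2}y^2+1)$. Under it the Chen--Zhu critical equations become $x_1^2y^{4u}=1$, together with a factorized equation whose nondegenerate branches are $x_1^2=-y(y+\varepsilon)/(1-\varepsilon y)$, $\varepsilon=\pm1$. On these branches $z_2=1+\varepsilon(y-y^{-1})$. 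By Prop.~6.3 of \cite{CZ1}, the nondegenerate critical point is unique and has $-\pi<\mathrm{Im}\,Z_i<0$, which forces $\varepsilon=1$; that is exactly the filled Whitehead gluing system. Only then do the identities $\widetilde S^{\mathbf v}_1=\tfrac12+\widetilde S^{\mathbf e}_1$ and $\mathbf v_i\Pi_{ij}\mathbf v_j=\mathbf e_i\Pi_{ij}\mathbf e_j$, checked by computer algebra, give $W_1^{\mathcal J}=W_1^{\mathcal I}$. Your leading-order remark that both saddles give the same point of the character variety presupposes this identification. Nothing in the saddle analysis of the colored Jones sum tells you a priori that $e^{2\pi i s}$ at the saddle is the squared holonomy eigenvalue of $K_b$.

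Your primary plan, an exact reduction of both sides to a common one-dimensional integral, does not work as stated. In the state integral only $X_1$ enters Gaussianly; the $Y_i$ sit inside four factors $\psi_\hbar(Y_i)$. The Chen--Zhu integral is a two-variable integral used only through its expansion truncated at $O(n^{-2})$. So there is no exact identity to find, and one would give all-orders agreement, far more than is claimed. Moreover, the variable matching $z_1=e^{2\pi i s}$ is $y^2=y_1y_2$, the holonomy eigenvalue of $K_b$ (parallel to the crossing circle), not the meridian variable $x_1$.

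You also misread the hypotheses. $p\geq 6$ is inherited from the asymptotic theorem of \cite{CZ1}. The range $0\leq l\leq 2|p|$ only reflects the basis of \cite{BL}; the argument is uniform in $l$. Since the insertion does not depend on $n$, it plays no role in saddle dominance or contour deformation, so no estimates near the Whitehead limit are needed.

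Finally, when computing $\widetilde S_1$, keep two contributions you did not mention. One is the term $a_i\Pi_{ij}\Gamma_j$ from the $O(\hbar^0)$ linear vertex. The other is the coefficients $c_{\vec a,1}$ coming from the factors $A^{\mp2}=e^{\mp\hbar/2}$ in the quantum trace of $K_b$. These produce the $-\tfrac12$ that cancels the $\tfrac12$ offset between $\widetilde S^{\mathbf v}_1$ and $\widetilde S^{\mathbf e}_1$; without them the first-order terms do not match.
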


The restriction $p \geq 6$ is needed because our proof draws on the asymptotic expansion of the colored Jones polynomial of twist knots established in \cite{CZ1}, which is proved under this hypothesis.

We emphasize that Theorem~\ref{thm:main} is stated and proved entirely in terms of the established $3$d quantum trace of \cite{PP, GY26} on the unfilled Whitehead link complement, and the insertion $\Tr_{\mathcal{T}}(K_b^{l})$ is an element of the standard square-root quantum gluing module of the Whitehead link.
In particular, the theorem makes no reference to any quantum trace map intrinsic to the filled manifold $S^3 \setminus \mathcal{K}_p$.
Nonetheless, the structure of the proof suggests that such an intrinsic map should exist.
Classical results on skein modules \cite{Pr} guarantee that the skein module of a Dehn-filled $3$-manifold is a quotient of the skein module of the parent, and we conjecture that this topological quotient corresponds to an algebraic quotient on the quantum trace side:

\begin{conj}[See Conjecture~\ref{conj:dehn_filled_trace}]
\label{conj:intro}
Let $M$ be an oriented, ideally triangulated $3$-manifold with toroidal cusps, and let $M(\mathfrak{p}, \mathfrak{q})$ denote the Dehn filling of $M$ along a chosen cusp with slope $(\mathfrak{p}, \mathfrak{q})$.
Then there exists a nontrivial quotient $\mathrm{SQGM}_{\mathcal{T}}(M)_{\mathfrak{p}, \mathfrak{q}}$ of the square-root quantum gluing module of $M$ and a Dehn-filled quantum trace map
\[
\Tr_{\mathcal{T}}^{\mathfrak{p}, \mathfrak{q}} : \mathrm{Sk}(M(\mathfrak{p}, \mathfrak{q})) \to \mathrm{SQGM}_{\mathcal{T}}(M)_{\mathfrak{p}, \mathfrak{q}}
\]
that is compatible with the $3$d quantum trace map of \cite{PP, GY26} on $M$.
\end{conj}

Conjecture~\ref{conj:intro} is the subject of ongoing work by the first author.  

\subsection*{Organization of the paper}
This paper is organized as follows.

We begin in Section \ref{sec:2} by introducing all of the necessary ingredients for a proper statement of the length conjecture, including our conventions for the colored Jones polynomial, the construction of the $3$d quantum trace, and the definition of the state integral. 

The actual proof of the length conjecture for twist knots begins in Section \ref{sec:skein_modules_and_triangulations}, where we give a basis for the skein module of twist knots and establish the triangulation of the Whitehead link complement we will use to construct the state integral for twist knots.
Section \ref{sec:colored_jones_polynomials} computes the colored Jones polynomial of twist knots with insertions and finds an integral approximation for its asymptotic expansion.
Section \ref{sec:state_integrals} computes the state integral for twist knots, including the insertion of the quantum trace of the basis elements of the skein module. 
Finally, Section \ref{sec:proof} brings it all together by expanding both the colored Jones and state integrals to first order in $\hbar$ and making the comparison. 

We conclude with Section \ref{sec:filled_quantum_trace}, which gives some more detail on Conjecture~\ref{conj:intro}.

\subsection*{Acknowledgements}
We give many thanks to Andrew Neitzke for countless helpful discussions.
We also thank Dongmin Gang, Pavel Putrov, and Ka Ho Wong for useful conversations.
This collaboration began during the program ``Geometric, Algebraic, and Physical
Structures around the Moduli of Meromorphic Quadratic Differentials'' at the Simons
Center for Geometry and Physics, whose hospitality we gratefully acknowledge. In addition, MR thanks Chebyshev Laboratory at St. Petersburg State U. for hospitality while part
of this work was performed. The first author was supported in part by NSF grant DMS-2405256.

%%%%%%%%%%%%%%%%%%%%%%%%%%%%%%%%%%%%%%%%%%%%%%%%

\section{Statement of the length conjecture} \label{sec:2}

This section is dedicated to defining our conventions and introducing the necessary ingredients for the full statement of the length conjecture. 

\subsection{Colored Jones polynomials}

In this subsection, we remind the reader of the construction of the colored Jones polynomial from the Kauffman bracket, as well as establish some of our conventions for the colored Jones polynomials of links used in this paper. 

\begin{defn}
    The \textit{Kauffman bracket} of an unoriented, framed link diagram $L$, denoted $\langle L \rangle$, is the polynomial in $A$ defined by the following recursive relations: 
    \begin{enumerate}
        \item 
        \begin{gather*}
        \left \langle
        \vcenter{\hbox{
        \begin{tikzpicture}[scale=0.7]
        \draw[dotted] (0,0) circle (1);
        \draw[line width=3] ({sqrt(2)/2},{-sqrt(2)/2}) -- ({-sqrt(2)/2},{sqrt(2)/2});
        \draw[white, line width=6] ({-sqrt(2)/2},{-sqrt(2)/2}) -- ({sqrt(2)/2},{sqrt(2)/2});
        \draw[line width=3] ({-sqrt(2)/2},{-sqrt(2)/2}) -- ({sqrt(2)/2},{sqrt(2)/2});
        \end{tikzpicture}
        }}
        \right \rangle
        \;\;=\;\;
        A\;
        \left \langle
        \vcenter{\hbox{
        \begin{tikzpicture}[scale=0.7]
        \draw[dotted] (0,0) circle (1);
        \draw[line width=3] ({sqrt(2)/2},{sqrt(2)/2}) arc (135:225:1);
        \draw[line width=3] ({-sqrt(2)/2},{-sqrt(2)/2}) arc (-45:45:1);
        \end{tikzpicture}
        }}
        \right \rangle
        \;\;+\;\;
        A^{-1}\;
        \left \langle
        \vcenter{\hbox{
        \begin{tikzpicture}[scale=0.7]
        \draw[dotted] (0,0) circle (1);
        \draw[line width=3] ({sqrt(2)/2},{sqrt(2)/2}) arc (315:225:1);
        \draw[line width=3] ({-sqrt(2)/2},{-sqrt(2)/2}) arc (135:45:1);
        \end{tikzpicture}
        }}
        \right \rangle,
        \end{gather*}
    \item 
        \begin{gather*}
        \left \langle
        \vcenter{\hbox{
        \begin{tikzpicture}[scale=0.7]
        \draw[dotted] (0,0) circle (1);
        \draw[line width=3] (0.5,0) arc (0:370:0.5);
        \end{tikzpicture}
        }}
        \right \rangle
        \;\;=\;\;
        (-A^{2}-A^{-2})\;
        \left \langle
        \vcenter{\hbox{
        \begin{tikzpicture}[scale=0.7]
        \draw[dotted] (0,0) circle (1);
        \end{tikzpicture}
        }}
        \right \rangle,
        \end{gather*}
    \end{enumerate}
    in addition to being normalized so that the bracket of the empty link is $1$.
\end{defn}

The colored Jones polynomial can then be defined from the Kauffman bracket. 
Fix a framed, unoriented link diagram $L$ with $k$ components $L_1, \ldots, L_k$. 
\begin{defn}
Denote by $L^{(n_1,\ldots,n_k)}$ the diagram obtained from $L$ by replacing each component $L_i$ with $n_i$ of its parallels. 
Then, the \textit{cabling of $L$ by the polynomial} 
\[f(z_1,\ldots,z_k) = \sum_{i_1=0}^{n_1} \ldots \sum_{i_k=0}^{n_k} a_{i_1,\ldots,i_k}z_1^{i_1}\ldots z_k^{i_k},\]
denoted $L_{f(z_1,\ldots,z_k)}$, is the linear combination of link diagrams 
\[\sum_{i_1=0}^{n_1} \ldots \sum_{i_k=0}^{n_k} a_{i_1,\ldots,i_k}L^{(i_1,\ldots,i_k)}.\]
\end{defn}

\begin{defn}
The \textit{Chebyshev polynomials of the second kind}, denoted $S_n(z)$, are defined by the following recursive sequence: 
\begin{align*}
    S_0(z) &= 1, \\
    S_1(z) &= z, \\
    S_n(z) &= zS_{n-1}(z)-S_{n-2}(z).
\end{align*}
\end{defn}

\begin{defn}
The \textit{$(n_1,\ldots,n_k)$ colored Jones polynomial of $L$} is 
\[J_{n_1,\ldots,n_k}(L) := \langle \, L_{S_{n_1-1}(z_1)S_{n_2-1}(z_2)\cdots S_{n_k-1}(z_k)} \, \rangle \big|_{A = q^{1/4}}.\]
\end{defn}

As an example, the usual Jones polynomial of a knot $K$, in our conventions, is the $2$-colored Jones polynomial of $K$.\footnote{The usual Jones polynomial is an invariant of unframed, oriented knots, and relies on an additional factor of $(-A^3)^{-\textrm{w}(K)}$ to be well-defined with respect to Reidemeister moves of type I ($\textrm{w}(K)$ is the writhe of the knot $K$). Using framed links sidesteps the need for this normalization.} 

\subsection{State integrals}

State integral models for analytically continued $\mathrm{SL}(2,\mathbb{C})$ Chern-Simons theory \cite{Witten:1989ip,Witten:2010cx} on $r$-cusped, oriented hyperbolic $3$-manifolds $M$ are finite integrals that serve as models for the Feynman path integrals. There exist different approaches to these models \cite{HIKAMI_2001,Hikami:2006cv,EllegaardAndersen:2011vps,Dimofte:2011gm,Dimofte:2012qj}, but we will follow the approach of \cite{Dimofte:2011gm,Dimofte:2012qj}. First, we need to review some aspects of ideal triangulations of $M$ \cite{Thurston2022GeometryTopology,Neumann:1985yjj}. An ideal triangulation 
\begin{equation}
\mathcal{T}=\bigcup_{i=1}^{N}\Delta_{i}
\end{equation}
of an $r$-cusped $3$-manifold consists of a collection of $N$ ideal tetrahedra whose shape parameters are chosen as in Figure~\ref{fig:shape_parameter_labels}, where we denote $z_{i}=\exp Z_{i}$ and likewise for $z'_{i}$ and $z''_{i}$. For each ideal tetrahedron, the shape parameters satisfy the relations
\begin{equation}
Z_{i}+Z'_{i}+Z''_{i}=-i\pi,\quad z_i=1-z_i''^{-1},\quad z_i''=1-z_i'^{-1},
 \end{equation}
along with $N$ gluing equations (or gluing constraints). 
There is one such relation for each edge of $\mathcal{T}$, taking the form
 \begin{equation}
\mathcal{C}_{i}:=\sum_{j=1}^{N}\left(M_{i,j}Z_{j}+M'_{i,j}Z'_{j}+M''_{i,j}Z''_{j}\right)=-2\pi i,\qquad i=1,\ldots,N
 \end{equation}
where $M_{i,j},M'_{i,j},M''_{i,j}\in\{0,1,2\}$ for all $i,j$ and only $N-r$ of these equations are linearly independent. We will be interested in nondegenerate configurations, i.e. solutions satisfying
\begin{equation}
-\pi<\mathrm{Im}(Z_{i})< 0,\qquad \text{for all \ }i=1,\ldots,N,
 \end{equation}
since these configurations will include the geometric connection. This requires us to slightly modify our equations, in comparison to the usual conventions used in \cite{Dimofte:2011gm,Dimofte:2012qj}. We can associate to each cusp a meridian and longitude equation:
\begin{eqnarray}
\text{Meridian:}\quad &&\sum_{j=1}^{N}\left(\mathfrak{M}_{\alpha,j}Z_{j}+\mathfrak{M}'_{\alpha,j}Z'_{j}+\mathfrak{M}''_{\alpha,j}Z''_{j}\right)=0,\nonumber\\
\text{Longitude:}\quad &&\sum_{j=1}^{N}\left(\mathfrak{L}_{\alpha,j}Z_{j}+\mathfrak{L}'_{\alpha,j}Z'_{j}+\mathfrak{L}''_{\alpha,j}Z''_{j}\right)=0,\qquad\alpha=1,\dots,r
 \end{eqnarray}
where $\mathfrak{M}$ and $\mathfrak{L}$ are $\mathbb{Z}$-valued $r\times N$ matrices.
\begin{defn}[\cite{Dimofte:2012qj}]
We define a \textit{choice of quad}, for a triangulation $\mathcal{T}$, as a choice of a pair of opposite edges for each tetrahedron $\Delta_{i}\in\mathcal{T}$. In general we have $3^N$ possible choices of quads for $\mathcal{T}$. Once a choice of quad is made, we will denote as $y_{i}=\exp Y_{i}$ the chosen variables collectively, and $y''_{i}$ as the variable satisfying
$$
y''_{i}=1-y_{i}^{-1}.
$$
\end{defn}

Given $\mathcal{T}$ and a choice of quad, we can choose $N-r$ equations from $\mathcal{C}_{i}$ and $r$ meridian equations to get $N$ independent equations that we write in terms of $y_{i}$ and  $y''_{i}$ as\footnote{Throughout this work, we adopt the convention that, in \eqref{gluingclass}, the equations $i=1,\ldots, N-r$ correspond to a subset of the (exponentiated) gluing constraints and $i=N-r+1,\ldots, N$ correspond to the meridian equations.}
\begin{equation}\label{gluingclass}
\prod_{j=1}^{N}y_{j}^{A_{ij}}y''^{B_{ij}}_{j}=e^{-i\pi \nu_{i}},\qquad i=1,\ldots, N.
 \end{equation}
The equations \eqref{gluingclass} define the $N\times N$ $\mathbb{Z}$-valued matrices $A,B$ and the $\mathbb{Z}$-valued vector $\nu$ of length $N$. Likewise, we can write the longitude equations as
\begin{equation}\label{longclass}
\prod_{j=1}^{N}y_{j}^{2C_{\alpha, j}}y''^{2D_{\alpha,j}}_{j}=e^{-2i\pi \nu'_{\alpha}},\qquad \alpha=1,\ldots,r.
\end{equation}
Then, the equations \eqref{longclass} define the $r\times N$ $\frac{1}{2}\mathbb{Z}$-valued matrices of components $C_{\alpha,j},D_{\alpha,j}$ and the $\frac{1}{2}\mathbb{Z}$-valued vector $\nu'$ of length $r$. These matrices can be completed to $N\times N$ $\frac{1}{2}\mathbb{Z}$-valued matrices $C,D$ satisfying the relations:
\begin{eqnarray}\label{condsympl1}
D^{t}A-B^{t}C=\mathbf{1},\qquad B^{t}D-D^{t}B=0.
\end{eqnarray}
When $B$ is invertible, conditions \eqref{condsympl1} are equivalent to imposing\footnote{In our conventions $M\in \mathrm{Sp}(2n,\mathbb{R})$ iff $M^{t}\Omega M=\Omega$ with $\Omega=\left[\begin{array}{cc}
0 & \mathbf{1}_{n}
\\
-\mathbf{1}_{n} & 0 
\end{array}\right]$.}
\begin{eqnarray}\label{condsympl}
\left[\begin{array}{cc}
A & B
\\
C & D
\end{array}\right]\in \mathrm{Sp}(2N,\mathbb{R}).
\end{eqnarray}
We also will need the definition of a combinatorial flattening \cite{Dimofte:2012qj}:
\begin{defn} 
A \textit{combinatorial flattening}, compatible with the longitude, consists of a pair of vectors $f,f''\in\mathbb{Z}^{N}$ satisfying
\begin{eqnarray}
A\cdot f+B\cdot f''=\nu,\qquad \sum_{i=1}^{N}(C_{\alpha,i}f_{i}+D_{\alpha,i}f''_{i})=\nu'_\alpha,\quad \alpha=1,\ldots,r.
\end{eqnarray}
Given an ideal triangulation $\mathcal{T}$, a combinatorial flattening always exists \cite{neumann1992combinatorics}.
\end{defn}
We are ready to define the state integral model for $\mathcal{T}$ associated to $M$:
\begin{defn} \label{def:state_integral} Given an ideal triangulation $\mathcal{T}$, a choice of quad, the matrices $(A,B,C,D,\nu)$ defined in \eqref{gluingclass}, \eqref{longclass} and \eqref{condsympl}, and a choice of combinatorial flattening $(f,f'')$, we define the state integral model associated to $(\mathcal{T},A,B,C,D,\nu,f,f'')$, whenever\footnote{It is shown in \cite{Dimofte:2012qj} that there always exists a choice of quad such that this is possible.} $\mathrm{det}B\neq 0$, as
\begin{equation}\label{PartFn}
Z_{\hbar}(\mathcal{T};X)=\frac{2}{\sqrt{\mathrm{det}B}}\int \prod_{i=1}^{N}\frac{dY_{i}}{\sqrt{2\pi \hbar}}\exp\left(\frac{1}{\hbar}Q(Y,X)\right) \prod_{i=1}^{N}\psi_{\hbar}(Y_{i}),
\end{equation}
where $X\in \mathbb{C}^{N}$ is a vector of parameters of the form
\begin{eqnarray}
X=(\underbrace{0,\ldots,0}_{N-r},X_{1},\ldots,X_{r})
\end{eqnarray}
and
\begin{eqnarray}\label{Qstateint}
Q(Y,X)&=&\frac{1}{2}YB^{-1}AY+2XDB^{-1}X+(\hbar-2\pi i)fB^{-1}X+\frac{1}{2}\left(\frac{\hbar}{2}-i\pi\right)^{2}fB^{-1}\nu \nonumber\\
&-&YB^{-1}\left( \left(\frac{\hbar}{2}-i\pi\right)\nu+2X\right), \qquad Y:=(Y_{1},\ldots,Y_{N}).
\end{eqnarray}
Because of the choice of quad, the combinatorial flattening and completion \eqref{condsympl} of the matrices $C,D$ are not unique: \eqref{PartFn} is defined up to a factor
\begin{eqnarray}
\exp\left(a\frac{\pi^{2}}{6\hbar}+b\frac{i\pi}{4}+c\frac{\hbar}{24}\right),\qquad a,b,c\in\mathbb{Z}.
\end{eqnarray}
\end{defn}
The function $\psi_{\hbar}$ in \eqref{PartFn} is called Faddeev's quantum dilogarithm and we collect several properties relevant for our computations in Appendix \ref{App:QDL}.
\begin{rmk} \begin{enumerate}
    \item The expression \eqref{PartFn} is defined with the integration contour unspecified. This is because \eqref{PartFn} should be understood as defined by its asymptotic expansion around any choice of critical point of the integrand, via the techniques detailed in Appendix \ref{app:Feynmann}.
    \item The conventions for \eqref{PartFn} are slightly different from the ones in \cite{Dimofte:2012qj}, because we are interested in computing asymptotic expansions around the hyperbolic or geometric connection, which is equivalent to the critical point satisfying
    \begin{eqnarray}
-\pi<\mathrm{Im}(Y_{i})<0,\qquad \text{for all \ }i=1,\ldots, N.
\end{eqnarray}
This critical point satisfies (see \cite{BGP}):
 \begin{eqnarray}
\mathrm{Im}(W_{0})= \mathrm{Vol}(M).
\end{eqnarray}
Therefore, we need to use the conjugate of the state integral model conventions used in \cite{Dimofte:2012qj}, which translates into the slight change of \eqref{Qstateint} used here. We will always work with
\begin{eqnarray}
\hbar\in i\mathbb{R}_{<0}.
\end{eqnarray}
This is the \emph{conjugate} of the conventions used in \cite{Gang:2017cwq}, for instance. 
\end{enumerate}
\end{rmk}

\subsection{The Dehn-filled state integral}\label{sec:DehnFillsi}
Triangulations of twist knots have been built in the literature (see, for instance, \cite{BGP}), but these triangulations increase in complexity with the knot.
To bypass this complication, we will study twist knots via Dehn fillings on the Whitehead link. 
This requires understanding how state integrals are modified under filling.

Let's first briefly establish our conventions for Dehn filling.
Let $M$ be an oriented $3$-manifold whose boundary consists of a union of $r$ toroidal cusps, equipped with an ideal triangulation $\mathcal{T}$.
Choose one specific boundary torus, which we will denote by $\partial_i M$.
Fix a choice of a meridian $\mu$ and a longitude $\lambda$ on $\partial_i M$, which together form a basis for the first homology.
Any isotopy class of a simple closed curve on this boundary component can then be uniquely parameterized by a pair of co-prime integers $(\mathfrak{p},\mathfrak{q})$, corresponding to the curve $\mathfrak{p}\mu + \mathfrak{q}\lambda$.
The \textit{$(\mathfrak{p},\mathfrak{q})$ Dehn-filled manifold, denoted $M(\mathfrak{p},\mathfrak{q})$}, is obtained by gluing a solid torus to $M$ such that the meridian curve of the newly attached solid torus is glued to the curve $\mathfrak{p}\mu + \mathfrak{q}\lambda$.

In the following we will focus on the case $r=2$, which is the most relevant for the present work. The state integral \eqref{PartFn} can be modified to implement Dehn filling along any of its cusps. This modification is inspired by the physics of M-theory \cite{Bae:2016jpi} and it was tested in \cite{Gang:2017cwq}, against the volume conjecture for certain closed $3$-manifolds. In order to implement the Dehn surgery along the boundary cycle $\mathfrak{p}\mu_{1}+\mathfrak{q}\lambda_{1}$ (with holonomies $2 X_{1}$ and $L_{1}$ around $\mu_1$ and $\lambda_1$, respectively)\footnote{The factor of $2$ in $2X_{1}$ is purely conventional, in order to follow the notation of \cite{Gang:2017cwq}.} on the state integral side, we just insert the following function in  
\eqref{PartFn}:
\begin{eqnarray}\label{kerpq}
\mathrm{K}_{\mathfrak{p},\mathfrak{q}}(X_1)&:=&\exp\left(\frac{\mathfrak{s}}{\mathfrak{q}}\left(\frac{\pi^{2}}{\hbar}-\frac{\hbar}{4}\right)+\frac{\mathfrak{p}X_1^2}{\mathfrak{q}\hbar}\right)\nonumber\\
&\times&\left(e^{-\frac{2\pi iX_1}{\hbar \mathfrak{q}}}\sinh\left(\frac{X_1+i\pi \mathfrak{s} }{\mathfrak{q}}\right)-e^{\frac{2\pi iX_1}{\hbar \mathfrak{q}}}\sinh\left(\frac{X_1-i\pi \mathfrak{s} }{\mathfrak{q}}\right)\right),
\end{eqnarray}
where 
\begin{eqnarray}\label{conddhen}
\left[\begin{array}{cc}
\mathfrak{h} & \mathfrak{s}
\\
\mathfrak{p} & \mathfrak{q}
\end{array}\right]\in \mathrm{PSL}(2,\mathbb{Z})
\end{eqnarray}
and the parameter $\mathfrak{h}$ does not appear explicitly in \eqref{kerpq} when we assume $\mathfrak{q}\neq 0$. So, altogether, we define
\begin{eqnarray}\label{dehnfillpf}
Z_{\hbar}^{(\mathfrak{p},\mathfrak{q})}(\mathcal{T};X_{2}):=\int \frac{dX_{1}}{\sqrt{2\pi \hbar \mathfrak{q}}}\mathrm{K}_{\mathfrak{p},\mathfrak{q}}(X_1)Z_{\hbar}(\mathcal{T};X_{1},X_{2}),
\end{eqnarray}
with $Z_{\hbar}^{(\mathfrak{p},\mathfrak{q})}(\mathcal{T};X_{2})$ identified with the partition function after Dehn surgery. The formula \eqref{dehnfillpf} must be understood in the same sense as \eqref{PartFn}, i.e. the integration $\int dX_{1}$ in \eqref{dehnfillpf} is left undefined and we just understand this formula as an asymptotic expansion around the different critical points of the integrand.
\begin{rmk}
The function \eqref{kerpq} differs slightly from the one used in \cite{Bae:2016jpi,Gang:2017cwq}, for the same reasons as \eqref{PartFn}: we need to take the conjugate in order to expand around the critical point satisfying $-\pi<\mathrm{Im}(Y)<0$ and $-\pi<\mathrm{Im}(X)<0$. In addition, we also need to consider $\hbar\in i\mathbb{R}_{<0}$.
\end{rmk}

\subsection{$3$d quantum trace map} \label{sec:quantum_trace}

In this subsection, we give a brief overview of the $3$d quantum trace map, following \cite{PP, PP2}.

\begin{defn}
    Let $M$ be an oriented $3$-manifold with boundary. 
    
    A \textit{boundary marking} for $M$ is a smoothly embedded, oriented graph $\Gamma$ in $\partial M$ such that every vertex of $\Gamma$ is either a source or a sink. 
    The pair $(M,\Gamma)$ is called a \textit{boundary-marked $3$-manifold}.

    A \textit{ribbon tangle in $(M,\Gamma)$} is a ribbon tangle whose boundary points all lie on $\Gamma$ away from the vertices, and whose framing at these boundary points is parallel to the tangent vector of the boundary marking. 
    We call such a tangle \textit{stated} if each of its boundary points is also adorned with an extra sign $\mu \in \{\pm 1\}.$
\end{defn}

Fix $R:= \mathbb{Z}[A^{\pm \frac 1 2},(-A^2)^{\pm \frac 1 2}].$

\begin{defn}
    Given a boundary marked $3$-manifold $(M,\Gamma)$, the \textit{stated skein module} $\mathrm{Sk}(M,\Gamma)$ is the $R$-module generated by the isotopy classes of stated ribbon tangles in $M$, modulo the following skein relations:\footnote{The orange arrows depict sections of the boundary marking.}
    \begin{gather}
    \vcenter{\hbox{
    \begin{tikzpicture}[scale=0.7]
    \draw[dotted] (0,0) circle (1);
    \draw[line width=3] ({sqrt(2)/2},{-sqrt(2)/2}) -- ({-sqrt(2)/2},{sqrt(2)/2});
    \draw[white, line width=6] ({-sqrt(2)/2},{-sqrt(2)/2}) -- ({sqrt(2)/2},{sqrt(2)/2});
    \draw[line width=3] ({-sqrt(2)/2},{-sqrt(2)/2}) -- ({sqrt(2)/2},{sqrt(2)/2});
    \end{tikzpicture}
    }}
    \;\;=\;\;
    A\;
    \vcenter{\hbox{
    \begin{tikzpicture}[scale=0.7]
    \draw[dotted] (0,0) circle (1);
    \draw[line width=3] ({sqrt(2)/2},{sqrt(2)/2}) arc (135:225:1);
    \draw[line width=3] ({-sqrt(2)/2},{-sqrt(2)/2}) arc (-45:45:1);
    \end{tikzpicture}
    }}
    \;\;+\;\;
    A^{-1}\;
    \vcenter{\hbox{
    \begin{tikzpicture}[scale=0.7]
    \draw[dotted] (0,0) circle (1);
    \draw[line width=3] ({sqrt(2)/2},{sqrt(2)/2}) arc (315:225:1);
    \draw[line width=3] ({-sqrt(2)/2},{-sqrt(2)/2}) arc (135:45:1);
    \end{tikzpicture}
    }}
    \;, \label{item:skeinRelation1}
    \\
    \vcenter{\hbox{
    \begin{tikzpicture}[scale=0.7]
    \draw[dotted] (0,0) circle (1);
    \draw[line width=3] (0.5,0) arc (0:370:0.5);
    \end{tikzpicture}
    }}
    \;\;=\;\;
    (-A^{2}-A^{-2})\;
    \vcenter{\hbox{
    \begin{tikzpicture}[scale=0.7]
    \draw[dotted] (0,0) circle (1);
    \end{tikzpicture}
    }}
    \;, \label{item:skeinRelation2}
    \\
    \vcenter{\hbox{
    \tdplotsetmaincoords{20}{30}
    \begin{tikzpicture}[tdplot_main_coords, rotate=30]
    \begin{scope}[scale = 1.0, tdplot_main_coords]
    \fill[color=lightgray, opacity=0.3] (0, -1, -1) -- (0, -1, 1) -- (0, 1, 1) -- (0, 1, -1) -- cycle;
    \draw[ultra thick, orange, ->] (0, -1, 0) -- (0, 0, 0);
    \draw[ultra thick, orange] (0, -1, 0) -- (0, 1, 0);
    \draw[dotted] (0, -1, -1) -- (0, -1, 1) -- (0, 1, 1) -- (0, 1, -1) -- cycle;
    \draw[dotted] (-1.5, -1, -1) -- (-1.5, -1, 1) -- (-1.5, 1, 1) -- (-1.5, 1, -1) -- cycle;
    \draw[dotted] (0, -1, -1) -- (-1.5, -1, -1);
    \draw[dotted] (0, -1, 1) -- (-1.5, -1, 1);
    \draw[dotted] (0, 1, -1) -- (-1.5, 1, -1);
    \draw[dotted] (0, 1, 1) -- (-1.5, 1, 1);
    \draw[line width=5] (0, 0.5, 0) .. controls (-0.8, 0.5, 0) and (-0.8, -0.5, 0) .. (0, -0.5, 0);
    \node[anchor = west] at (0, 0.5, 0) {$\mu$};
    \node[anchor = west] at (0, -0.5, 0) {$\nu$};
    \end{scope}
    \end{tikzpicture}
    }}
    \;\;=\;\;
    \delta_{\mu, -\nu}\,(-A^2)^{\frac{\mu}{2}}
    \;,
    \quad\quad \mu, \nu \in \{\pm 1\}\;,
    \\
    \vcenter{\hbox{
    \tdplotsetmaincoords{20}{30}
    \begin{tikzpicture}[tdplot_main_coords, rotate=30]
    \begin{scope}[scale = 1.0, tdplot_main_coords]
    \fill[color=lightgray, opacity=0.3] (0, -1, -1) -- (0, -1, 1) -- (0, 1, 1) -- (0, 1, -1) -- cycle;
    \draw[ultra thick, orange, ->] (0, -1, 0) -- (0, 0, 0);
    \draw[ultra thick, orange] (0, -1, 0) -- (0, 1, 0);
    \draw[dotted] (0, -1, -1) -- (0, -1, 1) -- (0, 1, 1) -- (0, 1, -1) -- cycle;
    \draw[dotted] (-1.5, -1, -1) -- (-1.5, -1, 1) -- (-1.5, 1, 1) -- (-1.5, 1, -1) -- cycle;
    \draw[dotted] (0, -1, -1) -- (-1.5, -1, -1);
    \draw[dotted] (0, -1, 1) -- (-1.5, -1, 1);
    \draw[dotted] (0, 1, -1) -- (-1.5, 1, -1);
    \draw[dotted] (0, 1, 1) -- (-1.5, 1, 1);
    \draw[line width=5] (-1.5, 0.5, 0) .. controls (-0.5, 0.5, 0) and (-0.5, -0.5, 0) .. (-1.5, -0.5, 0);
    \end{scope}
    \end{tikzpicture}
    }}
    \;\;=\;\;
    \sum_{\mu \in \{\pm 1\}}\,(-A^2)^{\frac{\mu}{2}}
    \vcenter{\hbox{
    \tdplotsetmaincoords{20}{30}
    \begin{tikzpicture}[tdplot_main_coords, rotate=30]
    \begin{scope}[scale = 1.0, tdplot_main_coords]
    \fill[color=lightgray, opacity=0.3] (0, -1, -1) -- (0, -1, 1) -- (0, 1, 1) -- (0, 1, -1) -- cycle;
    \draw[ultra thick, orange, ->] (0, -1, 0) -- (0, 0, 0);
    \draw[ultra thick, orange] (0, -1, 0) -- (0, 1, 0);
    \draw[dotted] (0, -1, -1) -- (0, -1, 1) -- (0, 1, 1) -- (0, 1, -1) -- cycle;
    \draw[dotted] (-1.5, -1, -1) -- (-1.5, -1, 1) -- (-1.5, 1, 1) -- (-1.5, 1, -1) -- cycle;
    \draw[dotted] (0, -1, -1) -- (-1.5, -1, -1);
    \draw[dotted] (0, -1, 1) -- (-1.5, -1, 1);
    \draw[dotted] (0, 1, -1) -- (-1.5, 1, -1);
    \draw[dotted] (0, 1, 1) -- (-1.5, 1, 1);
    \draw[line width=5] (0, 0.5, 0) -- (-1.5, 0.5, 0);
    \draw[line width=5] (0, -0.5, 0) -- (-1.5, -0.5, 0);
    \node[anchor = west] at (0, 0.5, 0) {$\mu$};
    \node[anchor = west] at (0, -0.5, 0) {$-\mu$};
    \end{scope}
    \end{tikzpicture}
    }}
    \\
    \vcenter{\hbox{
    \begin{tikzpicture}[scale=0.7]
    \fill[top color=black, bottom color=white] (-0.085, 1) to[out=-90, in=100] (0, 0) to[out=80, in=-90] (0.085, 1)--cycle;
    \fill[bottom color=black, top color=white] (-0.085, -1) to[out=90, in=-100] (0, 0) to[out=-80, in=90] (0.085, -1)--cycle;
    \draw[line width=1] (-0.075, -1) to[out=90, in=-90] (0.075, 1);
    \draw[dotted] (0,0) circle (1);
    \end{tikzpicture}
    }}
    \;\;=\;\;
    (-A^3)^{\frac{1}{2}}\;
    \vcenter{\hbox{
    \begin{tikzpicture}[scale=0.7]
    \draw[dotted] (0,0) circle (1);
    \draw[line width=3] (0, -1) -- (0, 1);
    \end{tikzpicture}
    }}
    \;. \label{item:halfTwistRel}
    \end{gather}
\end{defn}

Stated skein modules are useful because they admit a splitting map, allowing them to be chopped into simpler pieces. 
For the purposes of the $3$d quantum trace, the relevant splitting map comes from cutting an ideally triangulated $3$-manifold into \textit{face suspensions}, denoted $Sf$, and shown in Figure~\ref{fig:face_suspension}. 
Face suspensions have a natural boundary marking, shown in Figure~\ref{fig:face_suspension_markings}. 
We call each face of a face suspension an \textit{edge cone}, as they naturally correspond to an edge of a tetrahedron. 
Stated skein modules have natural module structures over skein algebras associated to vertices of their boundary marking. 
Face suspensions are naturally left modules over the skein algebra of the triangle, one copy for each of the two sinks, and right modules over the skein algebra of the biangle, one copy for each of the three sources.

\begin{figure}[htbp]
    \centering
    \begin{subfigure}[b]{0.45\textwidth}
        \centering
        \tdplotsetmaincoords{90}{90}
        \begin{tikzpicture}[tdplot_main_coords]
        %rotated coords for a (imo) better pov
        \tdplotsetrotatedcoords{-30}{5}{45}
        \begin{scope}[scale = 0.5, tdplot_rotated_coords]
            %fix coordinates of the first tetrahedra
            \coordinate (o) at (0, 0, 0);
            \coordinate (a) at (0, 0, 3);
            \coordinate (b) at ({2*sqrt(2)}, 0, -1);    
            \coordinate (c) at ({-sqrt(2)}, {sqrt(6)}, -1);    
            \coordinate (d) at ({-sqrt(2)}, {-sqrt(6)}, -1);
            %extra coordinates for the second tetrahedra
            \coordinate (e) at (0,0,-5); 
            \coordinate (o2) at (0,0,-2);
            %draw dotted edge
            \draw[very thick, dashed] (c) -- (d);
            %draw the edge cones in tet 1
            \filldraw[blue, opacity=0.3, line width=0] (d)--(c)--(o)--cycle;
            \filldraw[blue, opacity=0.1, line width=0] (d)--(b)--(o)--cycle;
            \filldraw[blue, opacity=0.3, line width=0] (c)--(b)--(o)--cycle;
            %draw edge cones in tet 2
            \filldraw[blue, opacity=0.3, line width=0] (d)--(c)--(o2)--cycle;
            \filldraw[blue, opacity=0.2, line width=0] (d)--(b)--(o2)--cycle;
            \filldraw[blue, opacity=0.4, line width=0] (c)--(b)--(o2)--cycle;
            %draw vertex cones in tet 1
            \draw[very thick, gray] (o) -- (b);
            \draw[very thick, gray] (o) -- (c);
            \draw[very thick, gray] (o) -- (d);
            %draw vertex cones in tet 2
            \draw[very thick, gray] (o2) -- (b);
            \draw[very thick, gray] (o2) -- (c);
            \draw[very thick, gray] (o2) -- (d);
            %draw the barycenters
            \filldraw[orange] (o) circle (2pt);
            \filldraw[orange] (o2) circle (2pt);
            %fill in the rest of the edges of the tetrehedra
            \draw[ultra thick, white] (b) -- (e);
            \draw[very thick] (b) -- (e);
            \draw[very thick] (c) -- (e);
            \draw[very thick] (d) -- (e);
            \draw[very thick] (b) -- (c);
            \draw[white, ultra thick] (a) -- (b);
            \draw[very thick] (a) -- (b);
            \draw[very thick] (a) -- (c);
            \draw[very thick] (a) -- (d);
            \draw[very thick] (b) -- (d);
            %label vertices of central face
            \filldraw[] (b) circle (.5pt);
            \filldraw[] (d) circle (.5pt);
            \filldraw[] (c) circle (.5pt);
            \filldraw[] (a) circle (.5pt);
            \filldraw[] (e) circle (.5pt);
        \end{scope}
        \end{tikzpicture}
        \caption{A face suspension as seen in an ideal triangulation.}
        \label{fig:face_suspension}
    \end{subfigure}
    \hfill
    \begin{subfigure}[b]{0.45\textwidth}
        \centering
        \tdplotsetmaincoords{25}{60}
        \begin{tikzpicture}[tdplot_main_coords]
        \begin{scope}[scale = 0.8, tdplot_main_coords]
            \coordinate (o) at (0, 0, 1);
            \coordinate (p) at (0, 0, -1);
            \coordinate (a) at (0, 0, 4);
            \coordinate (a1) at (0, 0, 0);
            
            \coordinate (b) at ({2*sqrt(2)}, 0, 0);
            \coordinate (b1) at ({-2*sqrt(2)/3}, 0, 4/3);
            
            \coordinate (c) at ({-sqrt(2)}, {sqrt(6)}, 0);
            \coordinate (c1) at ({sqrt(2)/3}, {-sqrt(6)/3}, 4/3);
            
            \coordinate (d) at ({-sqrt(2)}, {-sqrt(6)}, 0);
            \coordinate (d1) at ({sqrt(2)/3}, {sqrt(6)/3}, 4/3);
            
            \coordinate (ab) at ({sqrt(2)}, 0, 2);
            \coordinate (ac) at ({-sqrt(2)/2}, {sqrt(6)/2}, 2);
            \coordinate (ad) at ({-sqrt(2)/2}, {-sqrt(6)/2}, 2);
            \coordinate (bc) at ({sqrt(2)/2}, {sqrt(6)/2}, 0);
            \coordinate (bd) at ({sqrt(2)/2}, {-sqrt(6)/2}, 0);
            \coordinate (cd) at ({-sqrt(2)}, 0, 0);

            \draw[white, ultra thick] (o) -- (a1);
            
            \begin{scope}[thick,decoration={
            markings,
            mark=at position 0.5 with {\arrow{>}}}
            ] 
            \draw[postaction={decorate}, orange] (bd) -- (o);
            \draw[postaction={decorate}, orange] (bc) -- (o);
            \draw[postaction={decorate}, orange] (cd) -- (o);
            \draw[postaction={decorate}, dashed, orange] (bd) -- (p);
            \draw[postaction={decorate}, dashed, orange] (bc) -- (p);
            \draw[postaction={decorate}, dashed, orange] (cd) -- (p);
            \end{scope}
        
            \draw[very thick] (b) -- (c);
            \draw[very thick] (b) -- (d);
            \draw[very thick] (c) -- (d);
            
            \filldraw[orange] (o) circle (0.05em);
            \filldraw[orange] (p) circle (0.05em);
            \filldraw (b) circle (0.05em);
            \filldraw (c) circle (0.05em);
            \filldraw (d) circle (0.05em);
        
            \node[anchor=north east] at (bd) {$a$};
            \node[anchor=west] at (bc) {$b$};
            \node[anchor=south] at (cd) {$c$};
            \node[anchor=south] at (c) {$\alpha$};
            \node[anchor=east] at (d) {$\beta$};
            \node[anchor=north] at (b) {$\gamma$};
            
            % \node at (b) {$(b)$};
            % \node at (c) {$(c)$};
            % \node at (d) {$(d)$};
            % \node at (o) {$(o)$};
            % \node at (p) {$(p)$};
        \end{scope}
        \end{tikzpicture}
        \caption{The standard boundary markings of a face suspension.}
        \label{fig:face_suspension_markings}
    \end{subfigure}
    \caption{Face suspensions and their boundary markings.}
\end{figure}

It is necessary to take an additional quotient of the stated skein module of the face suspension. 
\begin{defn}
The reduced stated skein module of the face suspension is defined as 
\[\overline{\mathrm{Sk}}(Sf) := I^{\mathrm{bad},+} \setminus \mathrm{Sk}(Sf) \,/ \, I^{\mathrm{bad},-},\]
where $I^{\mathrm{bad},+}$ denotes the right ideal generated by the \textit{bad arcs} near the sinks, shown in Figure~\ref{fig:triangle_bad_arcs}, and $I^{\mathrm{bad},-}$ denotes the left ideal generated by the \textit{bad arcs} near the sources, shown in Figure~\ref{fig:biangle_bad_arcs}.
\end{defn}

\begin{figure}[htbp]
    \centering
    \begin{subfigure}[b]{0.45\textwidth}
        \centering
        \tdplotsetmaincoords{45}{60}

        \begin{tikzpicture}[tdplot_main_coords]
        
        \begin{scope}[tdplot_main_coords]
        \coordinate (N) at (0, 0, 1);
        \coordinate (S) at (0, 0, -1);  
        \coordinate (b) at ({2*sqrt(2)*cos(0)}, {2*sqrt(2)*sin(0)}, 0);
        \coordinate (c) at ({2*sqrt(2)*cos(120)}, {2*sqrt(2)*sin(120)}, 0);
        \coordinate (d) at ({2*sqrt(2)*cos(240)}, {2*sqrt(2)*sin(240)}, 0);   
        \coordinate (bc) at ($1/2*(b) + 1/2*(c)$);
        \coordinate (bd) at ($1/2*(b) + 1/2*(d)$);
        \coordinate (cd) at ($1/2*(c) + 1/2*(d)$);
        \coordinate (kb1) at ($(bd)!.6!(N)$);
        \coordinate (kb2) at ($(bc)!.6!(N)$);
        \coordinate (kb3) at ($(bc)!.4!(N)$);
        \coordinate (kb4) at ($(bc)!.4!(S)$);   
        \begin{scope}[thick,decoration={
        markings,
        mark=at position 0.5 with {\arrow{>}}}
        ] 
        %kb and the marking it is "above"
        \draw[blue] (kb1) .. controls (0,0,.5) .. (kb2);
        \draw[postaction={decorate}, dashed, orange] (bc) -- (S);
        %markings
        \draw[postaction={decorate}, orange] (bd) -- (N);
        \draw[postaction={decorate}, orange] (bc) -- (N);
        \draw[postaction={decorate}, orange] (cd) -- (N);
        \draw[postaction={decorate}, dashed, orange] (bd) -- (S);
        \draw[postaction={decorate}, dashed, orange] (cd) -- (S);
        \end{scope}
        %edges
        \begin{scope}[decoration={
        markings,
        mark=between positions 0.6 and .66 step 0.06 with \arrow{stealth}},very thick]
        \draw[] (b)--(c);
        \draw[] (b)--(d);
        \end{scope}
        \begin{scope}[decoration={
        markings,
        mark=at position 0.6 with \arrow{stealth}},very thick]
        \draw[] (d)--(c);
        \end{scope}    
        \filldraw[orange] (N) circle (0.05em);
        \filldraw[orange] (S) circle (0.05em);
        \filldraw (b) circle (0.05em);
        \filldraw (c) circle (0.05em);
        \filldraw (d) circle (0.05em);
        %\node[anchor=north east] at (bd) {$a$};
        %\node[anchor=west] at (bc) {$b$};
        %\node[anchor=south] at (cd) {$c$};
        %\node[anchor=south] at (c) {$\alpha$};
        %\node[anchor=east] at (d) {$\beta$};
        %\node[anchor=north] at (b) {$\gamma$};
        \node[right,scale=.75] at (kb1) {$+$};
        \node[above,scale=.75] at (kb2) {$-$};  
        \end{scope}
        \end{tikzpicture}
        \caption{Triangle bad arcs.}
        \label{fig:triangle_bad_arcs}
    \end{subfigure}
    \hfill
    \begin{subfigure}[b]{0.45\textwidth}
        \centering
        \tdplotsetmaincoords{45}{60}
        \begin{tikzpicture}[tdplot_main_coords]
        
        \begin{scope}[tdplot_main_coords]
        \coordinate (N) at (0, 0, 1);
        \coordinate (S) at (0, 0, -1);  
        \coordinate (b) at ({2*sqrt(2)*cos(0)}, {2*sqrt(2)*sin(0)}, 0);
        \coordinate (c) at ({2*sqrt(2)*cos(120)}, {2*sqrt(2)*sin(120)}, 0);
        \coordinate (d) at ({2*sqrt(2)*cos(240)}, {2*sqrt(2)*sin(240)}, 0);   
        \coordinate (bc) at ($1/2*(b) + 1/2*(c)$);
        \coordinate (bd) at ($1/2*(b) + 1/2*(d)$);
        \coordinate (cd) at ($1/2*(c) + 1/2*(d)$);
        \coordinate (kb1) at ($(bd)!.6!(N)$);
        \coordinate (kb2) at ($(bc)!.6!(N)$);
        \coordinate (kb3) at ($(bc)!.4!(N)$);
        \coordinate (kb4) at ($(bc)!.4!(S)$);   
        \begin{scope}[thick,decoration={
        markings,
        mark=at position 0.5 with {\arrow{>}}}
        ] 
        %kb and the marking it is "above"
        \draw[postaction={decorate}, dashed, orange] (bc) -- (S);
        \draw[blue] (kb3) .. controls ({3/4*cos(60)},{3/4*sin(60)},0) .. (kb4);
        %markings
        \draw[postaction={decorate}, orange] (bd) -- (N);
        \draw[postaction={decorate}, orange] (bc) -- (N);
        \draw[postaction={decorate}, orange] (cd) -- (N);
        \draw[postaction={decorate}, dashed, orange] (bd) -- (S);
        \draw[postaction={decorate}, dashed, orange] (cd) -- (S);
        \end{scope}
        %edges
        \begin{scope}[decoration={
        markings,
        mark=between positions 0.6 and .66 step 0.06 with \arrow{stealth}},very thick]
        \draw[] (b)--(c);
        \draw[] (b)--(d);
        \end{scope}
        \begin{scope}[decoration={
        markings,
        mark=at position 0.6 with \arrow{stealth}},very thick]
        \draw[] (d)--(c);
        \end{scope}    
        \filldraw[orange] (N) circle (0.05em);
        \filldraw[orange] (S) circle (0.05em);
        \filldraw (b) circle (0.05em);
        \filldraw (c) circle (0.05em);
        \filldraw (d) circle (0.05em);
        %\node[anchor=north east] at (bd) {$a$};
        %\node[anchor=west] at (bc) {$b$};
        %\node[anchor=south] at (cd) {$c$};
        %\node[anchor=south] at (c) {$\alpha$};
        %\node[anchor=east] at (d) {$\beta$};
        %\node[anchor=north] at (b) {$\gamma$};
        \node[above right,scale=.75] at (kb3) {$+$};
        \node[below right,scale=.75] at (kb4) {$-$};    
        \end{scope}
        \end{tikzpicture}
        \caption{Biangle bad arcs.}
        \label{fig:biangle_bad_arcs}
    \end{subfigure}
    \caption{Bad arcs in face suspensions.}
    \label{fig:main}
\end{figure}

Now, let $Y$ be a $3$-manifold equipped with an ideal triangulation $\mathcal{T}$.
Denote by $F$ the set of faces of this ideal triangulation. 
\begin{thm} \label{thm:splitting_map}
    There is a well-defined splitting map, 
    \begin{align*}
    \overline{\sigma}: \mathrm{Sk}(Y) &\rightarrow \overline{\bigotimes}_{f\in F} \overline{\mathrm{Sk}}(Sf), \\
    [L] &\mapsto \left[ \sum_{\vec{\epsilon}\in\{\pm 1\}^{\cup_f Sf \cap L}} \otimes_{f\in F}\left[L_f^{\vec{\epsilon}}\right] \right],
    \end{align*}
    where each $\vec{\epsilon}$ is an assignment of states to the newly created boundary points of $L$\footnote{As usual, we require this assignment of states to be \textit{compatible}, i.e., two boundary points that were ``glued'' together in the uncut manifold must carry the same state.}, $L_f^{\vec{\epsilon}}$ is the part of $L$ in $Sf$ after splitting with its newly assigned boundary states, and $\overline{\bigotimes}_{f\in F} \overline{\mathrm{Sk}}(Sf)$ is the relative tensor product of the $\overline{\mathrm{Sk}}(Sf)$ bimodules (see Definition $2.7$ in \cite{PP2}).
\end{thm}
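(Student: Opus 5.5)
To prove Theorem~\ref{thm:splitting_map}, I would first define $\overline{\sigma}$ on diagrams and then show it respects every relation in $\mathrm{Sk}(Y)$. The ideal triangulation gives a decomposition $Y = \bigcup_{f\in F} Sf$: each tetrahedron is cut into the four face suspensions through its barycenter, and each face suspension is shared between the two tetrahedra adjacent to $f$. The cutting surfaces are the edge cones, each shared by two face suspensions and carrying the orange boundary marking of Figure~\ref{fig:face_suspension_markings}.

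\textbf{Step 1: define the map on good representatives.} Given a framed link $L \subset Y$, isotope it to be transverse to the union of edge cones, missing the cone vertices and the edges. Also make its framing at each intersection point parallel to the marking direction. Cutting along the edge cones produces the tangles $L_f$. Summing over compatible state assignments $\vec\epsilon$ gives an element of $\bigotimes_f \mathrm{Sk}(Sf)$ over $R$, which we push to the reduced quotients. The internal skein relations \eqref{item:skeinRelation1}, \eqref{item:skeinRelation2} and \eqref{item:halfTwistRel} are local. So we may isotope them into the interior of a single $Sf$, where they hold termwise. This is the standard argument in the surface case of \cite{BW} and the stated-skein literature.

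\textbf{Step 2: isotopy invariance.} By a general position argument, any two good representatives are related by a finite sequence of moves of three kinds. First, isotopies within the complement of the cutting locus. Second, creation or cancellation of a pair of adjacent intersection points with a single edge cone, i.e.\ a small arc pushed through a cone. Third, moves where a strand passes across a cone edge, which is a gray vertex-cone segment or a tetrahedron edge, or across a cone vertex (a sink or source), or where the height order of intersection points along a marking changes. For the second kind, the computation is exactly the pair of stated relations in the definition of $\mathrm{Sk}(M,\Gamma)$: the cap relation with $\delta_{\mu,-\nu}(-A^2)^{\mu/2}$, combined with the cup-splitting relation, makes the state sum collapse to the original arc. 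For height reorderings along a marking, we use the height-exchange consequences of the stated relations together with the relative tensor product structure, following Definition $2.7$ of \cite{PP2}.

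\textbf{Step 3, the main obstacle: moves near the sinks and sources.} When a strand is pushed across a barycenter (sink) or across the midpoint of an edge of the triangulation (source), the result differs from the original only by terms in which some $L_f^{\vec\epsilon}$ contains a bad arc of the type in Figure~\ref{fig:triangle_bad_arcs} or Figure~\ref{fig:biangle_bad_arcs}. These terms are killed precisely because we work in $\overline{\mathrm{Sk}}(Sf) = I^{\mathrm{bad},+}\backslash \mathrm{Sk}(Sf)/I^{\mathrm{bad},-}$. To show they are killed, I would expand the pushed strand with the skein relations, separate the terms with matching states (which reproduce the original tangle) from those with a bad-arc state, and check that the latter lie in the appropriate ideal. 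Near a sink the relevant ideal is the right ideal $I^{\mathrm{bad},+}$; near a source it is the left ideal $I^{\mathrm{bad},-}$. The triangle/biangle module structures ensure that the relative tensor product identifies the results across neighboring face suspensions. This sidedness bookkeeping is the delicate point, and it is why the relative tensor product over the triangle and biangle skein algebras is needed rather than a plain tensor product.

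Finally, linearity over $R$ and the independence of the isotopy class together give a well-defined map on $\mathrm{Sk}(Y)$.
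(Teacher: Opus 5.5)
\textbf{There is a genuine gap in Step 3.} The paper does not prove this statement; it imports it from \cite{PP, PP2}, so your argument has to stand on its own. Steps 1 and 2 are fine: locality of the interior relations, the cap/cup computation for a strand pushed through a single edge cone, and height exchanges along a marking are the standard stated-skein argument. The difficulty sits entirely in Step 3, and two things go wrong there.

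First, a strand passing through a barycenter or an edge midpoint is a codimension-three event and does not occur in a generic isotopy. The events you actually have to handle are crossings of the one-dimensional strata of the cutting locus:
\begin{itemize}
\item a tetrahedron edge $E$, where the $k$ edge cones around $E$ meet;
\item a vertex-cone ray, where three edge cones meet.
\end{itemize}

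Second, and more seriously, your mechanism for these crossings is wrong. After the crossing, the strand runs around the \emph{other} side of the stratum, through different face suspensions and ending on different marking arcs. So the terms with non-bad states do not ``reproduce the original tangle''; they differ from it by a monodromy. Slide the intersection points to the markings near the source $m$ on $E$ (resp. near the sink $o$). Then the two sides differ by the product of the biangle arcs at $m$ in all $k$ face suspensions around $E$ (resp. the product of the three triangle corner arcs around the vertex direction at $o$). Killing bad arcs cannot identify these products with scalars.

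A clean test case shows this. Take a small meridian unknot $U$ around $E$ near $m$. In $\mathrm{Sk}(Y)$ it equals $(-A^2-A^{-2})\,\emptyset$. Its state sum, however, contains the all-$+$ and all-$-$ terms. These are not bad, and up to normalization they are the products $\prod_j x_{m,j}^{\pm 1}$ of the biangle generators of the face suspensions around $E$. Under the face-suspension traces these become $\hat e$ and $\hat e^{-1}$, which are not scalars in a plain tensor product of the $\overline{\mathrm{Sk}}(Sf)$ or of the $\mathbb{S}f$.

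The missing idea is what the relative tensor product $\overline{\bigotimes}$ must impose at every sink and source: the relations coming from skeins supported in a small ball around that point, whose skein module is $R$. Concretely, an arc going once around a vertex-cone ray at $o$, or once around $E$ at $m$, must be identified with the appropriate scalar multiple of the trivial arc. On the quantum-trace side these are exactly the relations $[\hat z\hat z'\hat z'']=-(-A^2)^{1/2}$ and $\hat e=-A^2$ of $\mathrm{SQGM}_{\mathcal{T}}(Y)$.

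Your proof needs to state these relations explicitly. This is the content of Definition 2.7 of \cite{PP2}, which you only gesture at as ``sidedness bookkeeping''. You then need to verify invariance under the two codimension-two moves using them. The bad-arc ideals play a different role: they only remove some mixed-state terms from these state sums, and they are there so that the maps $\mathrm{Tr}_{Sf}$ are well defined.
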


The reduced stated skein module of a face suspension is quite simple. 
\begin{prop}
    \[
    \overline{\mathrm{Sk}}(Sf) \cong \frac{\mathbb{T}^{\otimes 2} \otimes \mathbb{B}^{\otimes 3}}{
    \langle
    (-A^2)\alpha_1\alpha_2 = x_b x_c, \;
    (-A^2)\beta_1\beta_2 = x_c x_a, \;
    (-A^2)\gamma_1\gamma_2 = x_a x_b
    \rangle}
    \]
    as a $\mathbb{T}^{\otimes 2} \otimes \mathbb{B}^{\otimes 3}$-module,
    where
    \begin{align*}
    \mathbb{T}^{\otimes 2} &=
    \frac{R\langle \alpha_1^{\pm 1}, \beta_1^{\pm 1}, \gamma_1^{\pm 1}\rangle}{\langle \beta_1\alpha_1 = A\alpha_1\beta_1,\; \gamma_1\beta_1 = A\beta_1\gamma_1,\; \alpha_1\gamma_1 = A\gamma_1\alpha_1 \rangle}\\
    &\quad \otimes
    \frac{R\langle \alpha_2^{\pm 1}, \beta_2^{\pm 1}, \gamma_2^{\pm 1}\rangle}{\langle \alpha_2\beta_2 = A\beta_2\alpha_2,\; \beta_2\gamma_2 = A\gamma_2\beta_2,\; \gamma_2\alpha_2 = A\alpha_2\gamma_2 \rangle}
    \end{align*}
    and
    \[
    \mathbb{B}^{\otimes 3} = R[x_a^{\pm 1}, x_b^{\pm 1}, x_c^{\pm 1}]. 
    \]
\end{prop}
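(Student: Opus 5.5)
We plan to prove the proposition by exhibiting explicit generators of $\overline{\mathrm{Sk}}(Sf)$, identifying the relations they satisfy, and then showing that these relations are complete by a basis argument.

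\textbf{Generators and the two algebra actions.} The face suspension $Sf$ is topologically a ball. Its boundary is divided by the marking into three edge-cone faces. It has two sinks (the barycenters of the two tetrahedra, where three marking arcs meet) and three sources (the midpoints $a,b,c$ of the edges).

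Near each sink, a neighborhood of the vertex in $\partial Sf$ is a triangle. Stated arcs going around its corners $\alpha,\beta,\gamma$ generate the stated skein algebra of the triangle. After quotienting by the triangle bad arcs, only the corner arcs with a fixed ``good'' state pattern survive. Each such arc then becomes invertible, and the $q$-commutation relations of Bonahon--Wong/L\^e type hold. This produces the quantum torus $\mathbb{T}$, once for each sink. The opposite orientation at the second sink accounts for the reversed commutation relations in the second factor.

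Near each source, the relevant algebra is the biangle algebra $\mathcal{O}_q(\mathrm{SL}_2)$. Modulo the biangle bad arcs (the left ideal), it collapses to the commutative Laurent ring generated by the diagonal arc $x$, with $x^{-1}$ given by the opposite-state arc. This produces $\mathbb{B}^{\otimes 3}$.

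\textbf{Step 1: the module is cyclic.} We first show that $\overline{\mathrm{Sk}}(Sf)$ is generated by the empty skein as a $\mathbb{T}^{\otimes2}\otimes\mathbb{B}^{\otimes3}$-module.

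Since $Sf$ is a ball, every stated tangle can be isotoped into a collar of the boundary. Using the skein relations, including the boundary relations and the half-twist relation, we resolve it into a combination of simple stated arcs near $\partial Sf$. Each such arc either sits near a sink (a corner arc), sits near a source, or crosses a face cone from one marking edge to another.

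An arc in an edge cone running between two source markings can be slid to pass through a sink neighborhood. The boundary skein relation then splits it into a product of a corner arc and source arcs. This is exactly where the relations $(-A^2)\alpha_1\alpha_2 = x_bx_c$, and the analogous ones for $\beta$ and $\gamma$, come from: the corner arc $\alpha$ at the top sink, composed with the corner arc $\alpha$ at the bottom sink, is isotopic across the edge cone through the marking arcs at $b$ and $c$. The factor $-A^2$ arises from the state/height-exchange relation.

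\textbf{Step 2: the relations hold.} Verifying that the three displayed relations hold is then a local computation. We perform the isotopy of the arc through the edge cone, apply the cap relation $\delta_{\mu,-\nu}(-A^2)^{\mu/2}$ twice, and use the bad-arc quotients to kill the terms with the wrong states.

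\textbf{Step 3: the relations are complete.} This is the main obstacle. The proposed right-hand side has an evident $R$-basis of monomials
\[
\alpha_1^{i}\beta_1^{j}\gamma_1^{k}\,\alpha_2^{\epsilon_1}\cdots x_a^{m}x_b^{n}x_c^{p},
\]
after using the relations to eliminate, say, the second-sink variables up to a bounded part. We must show that these monomials map to linearly independent elements of $\overline{\mathrm{Sk}}(Sf)$.

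Our first approach is to use the known basis theorem for stated skein modules of a ball with marked boundary: the Costantino--L\^e type basis of simple stated diagrams with increasingly ordered states. We would then check that the reduction by bad arcs kills precisely the basis elements containing a bad arc.

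If that bookkeeping proves too messy, the fallback is to construct a representation. We would define a map from $\overline{\mathrm{Sk}}(Sf)$ to a quantum torus in the shape variables by sending each corner arc to the corresponding monomial, as in the Bonahon--Wong construction, and check that it inverts the natural surjection.

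Either route requires care with the one-sided ideals, since $I^{\mathrm{bad},+}$ and $I^{\mathrm{bad},-}$ act on opposite sides. We would therefore first show that the two quotients commute, which holds because the sink neighborhoods and the source neighborhoods are disjoint.
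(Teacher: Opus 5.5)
\textbf{Main gap: Step 3 is not established.} The paper does not prove this proposition. It quotes it from \cite{PP,PP2} and then uses it to \emph{define} $\mathrm{Tr}_{Sf}$. So your argument has to stand on its own, and its decisive step, Step 3, is not carried out by either route you offer.

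\emph{Route (i).} This invokes a Costantino--L\^e basis, but that theorem is about thickened punctured bordered surfaces. The only marked balls it covers are thickened polygons, whose marking is a family of edges from one source to one sink. $(Sf,\Gamma)$ has two sinks and three sources, and its complementary regions are quadrilaterals, so no basis of $\mathrm{Sk}(Sf)$ is available off the shelf.

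More seriously, your proposed mechanism cannot be right. The diagrams $\alpha_1\alpha_2\cdot[\emptyset]$ and $[\emptyset]\cdot x_bx_c$ are both crossingless stated diagrams with no bad arcs, and their states are vacuously increasing since each marking arc carries at most one endpoint. Suppose both were in your basis and the reduction only deleted basis elements containing a bad arc. Then they would remain independent in $\overline{\mathrm{Sk}}(Sf)$, contradicting the relation $(-A^2)\alpha_1\alpha_2=x_bx_c$ you are trying to prove.

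\emph{Route (ii).} This is circular as stated. Prescribing where corner arcs and biangle arcs go defines a map out of $\overline{\mathrm{Sk}}(Sf)$ only once you already have a presentation of $\overline{\mathrm{Sk}}(Sf)$ in those generators, and that presentation is the proposition.

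What is missing is an independently defined $R$-linear map on \emph{all} stated tangles in $Sf$. This could be a Bonahon--Wong-type state sum, or a splitting into pieces whose reduced skein modules are already known. The map must respect every defining relation (Kauffman, cap, cutting, half-twist), kill both bad-arc submodules, be equivariant for the sink and source actions, and send $[\emptyset]\mapsto 1$.

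Given such a map, injectivity is easy. The three relations eliminate $\alpha_2,\beta_2,\gamma_2$ completely; no ``bounded part'' remains. So the quotient is free on the monomials $\alpha_1^{i}\beta_1^{j}\gamma_1^{k}x_a^{m}x_b^{n}x_c^{r}$. These go to unit multiples of monomials in $\mathbb{S}f$. Their exponent vectors are the images of $(i,j,k,m,n,r)$ under a linear map $\mathbb{Z}^6\to\mathbb{Z}^6$ of determinant $2$, hence pairwise distinct.

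\textbf{Secondary error: where the relations come from.} Your Step 1 account is wrong. $\alpha_1\cup\alpha_2$ is not isotopic to the union of the biangle arcs at $b$ and $c$: they are the two different non-crossing pairings of the four marked points around the region containing $\alpha$. An isotopy cannot change which endpoints are joined, so the relation holds only after reduction.

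A correct derivation runs as follows.
\begin{enumerate}
\item Isotope the single arc $\alpha_1$ within that region until it runs past the bottom sink.
\item Apply the \emph{cutting} relation $\sum_\mu(-A^2)^{\mu/2}(\cdots)$, not the cap relation $\delta_{\mu,-\nu}(-A^2)^{\mu/2}$, on the two lower marking arcs at $b$ and $c$.
\item Discard the terms containing mixed-state biangle arcs. These are bad arcs, and they sit in the extreme position next to the sources.
\item Use that the surviving $(-,-)$ corner arc at the bottom sink is a unit multiple of $\alpha_2^{-1}$ in $\mathbb{T}$.
\end{enumerate}
In Step 1 you also need the height-exchange relations. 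They are what moves endpoints sharing a marking arc into the order required for an element of the form $t\cdot[\emptyset]\cdot s$.
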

In this presentation, $\alpha_1$ is the skein that connects the edge cones abutting the vertex labeled by $\alpha$ in Figure~\ref{fig:face_suspension_markings} and sitting in the top tetrahedron with the state $+$ on both ends. 
The other generators can be realized similarly. 

We use this presentation to build a quantum trace on each face suspension. 
\begin{defn}
    The \emph{face suspension module} $\mathbb{S}f$ of $Sf$ is the quantum torus
    \begin{align*}
    \mathbb{S}f := \widetilde{\mathbb{T}}^{\otimes 2} &= 
    \frac{R\langle a_1^{\pm 1}, b_1^{\pm 1}, c_1^{\pm 1}\rangle}{\langle b_1a_1 = Aa_1b_1,\; c_1b_1 = Ab_1c_1,\; a_1c_1 = Ac_1a_1\rangle} \\
    &\quad \otimes \frac{R\langle a_2^{\pm 1}, b_2^{\pm 1}, c_2^{\pm 1}\rangle}{\langle a_2b_2 = Ab_2a_2,\; b_2c_2 = Ac_2b_2,\; c_2a_2 = Aa_2c_2\rangle},
    \end{align*}
    where we think of the $6$ generators being associated to the $6$ faces (edge cones) of the face suspension.
\end{defn}

\begin{prop}
There is a well-defined $\mathbb{T}^{\otimes 2}$-$\mathbb{B}^{\otimes 3}$-bimodule homomorphism
\[
\Tr_{Sf} : \overline{\mathrm{Sk}}(Sf) \rightarrow \mathbb{S}f
\]
mapping the empty skein $[\emptyset]$ to $1$, induced by the embeddings of algebras
\begin{align*}
\mathbb{T}^{\otimes 2} &\hookrightarrow \widetilde{\mathbb{T}}^{\otimes 2}\\
\alpha_1 &\mapsto -(-A^2)^{-\frac 1 2} [b_1c_1],\\
\beta_1 &\mapsto -(-A^2)^{-\frac 1 2} [c_1a_1],\\
\gamma_1 &\mapsto -(-A^2)^{-\frac 1 2} [a_1b_1],\\
\alpha_2 &\mapsto -(-A^2)^{-\frac 1 2} [b_2c_2],\\
\beta_2 &\mapsto -(-A^2)^{-\frac 1 2} [c_2a_2],\\
\gamma_2 &\mapsto -(-A^2)^{-\frac 1 2} [a_2b_2],
\end{align*}
and
\begin{align*}
\mathbb{B}^{\otimes 3} &\hookrightarrow \widetilde{\mathbb{T}}^{\otimes 2}\\
x_a &\mapsto a_1 \otimes a_2,\\
x_b &\mapsto b_1 \otimes b_2,\\
x_c &\mapsto c_1 \otimes c_2.
\end{align*}
\end{prop}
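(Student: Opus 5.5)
The plan is to use the presentation of $\overline{\mathrm{Sk}}(Sf)$ from the preceding Proposition. By that Proposition, $\overline{\mathrm{Sk}}(Sf)$ is the cyclic $\mathbb{T}^{\otimes 2}$-$\mathbb{B}^{\otimes 3}$-bimodule generated by $[\emptyset]$, modulo the three relations $(-A^2)\alpha_1\alpha_2 = x_bx_c$ (and cyclic). The bad arc quotients are already built into this presentation, so they need no separate treatment. Write $\phi:\mathbb{T}^{\otimes 2}\to\widetilde{\mathbb{T}}^{\otimes 2}$ and $\psi:\mathbb{B}^{\otimes 3}\to\widetilde{\mathbb{T}}^{\otimes 2}$ for the two displayed assignments. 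We define
\[
\Tr_{Sf}\bigl(t\cdot[\emptyset]\cdot x\bigr) := \phi(t)\,\psi(x).
\]
This is automatically compatible with left multiplication by $\mathbb{T}^{\otimes 2}$ and right multiplication by $\mathbb{B}^{\otimes 3}$, by associativity in $\widetilde{\mathbb{T}}^{\otimes 2}$. So well-definedness reduces to three checks: (i) $\phi$ respects the defining relations of $\mathbb{T}^{\otimes 2}$; (ii) $\psi$ respects commutativity of $\mathbb{B}^{\otimes 3}$; (iii) the three extra relations hold in the image.

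All three checks are computations in a quantum torus. In the first factor the relations give $a_1b_1=A^{-1}b_1a_1$, and similarly cyclically. Hence there is an antisymmetric integer form $\omega_1$ on $\mathbb{Z}^3$ with $XY = A^{\omega_1(X,Y)}YX$ for monomials, and the second factor has the opposite form $\omega_2=-\omega_1$. Writing $[\,\cdot\,]$ for Weyl ordering, $[XY]=A^{-\omega(X,Y)/2}XY$, Weyl-ordered monomials satisfy
\[
[X][Y]=A^{\omega(X,Y)/2}[X+Y].
\]
For (i), for instance $\beta_1\alpha_1=A\alpha_1\beta_1$ becomes $[c_1a_1][b_1c_1] = A[b_1c_1][c_1a_1]$. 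This reduces to the identity
\[
\omega_1(c_1+a_1,\;b_1+c_1)=1,
\]
which is read off from the table of $\omega_1$. The other two relations of that factor follow by the cyclic symmetry $a\to b\to c$, and the second factor is identical with all signs flipped. The scalar prefactors $-(-A^2)^{-1/2}$ are central and play no role here.

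For (ii), note that
\[
\psi(x_a)\psi(x_b)=a_1b_1\otimes a_2b_2 \quad\text{and}\quad \psi(x_b)\psi(x_a)=b_1a_1\otimes b_2a_2 .
\]
These differ by $A^{\omega_1(a,b)+\omega_2(a,b)}=1$, so the images commute; $\psi$ also clearly sends the monomial basis injectively into $\widetilde{\mathbb{T}}^{\otimes 2}$. For (iii), we have
\[
(-A^2)\phi(\alpha_1)\phi(\alpha_2)=[b_1c_1]\otimes[b_2c_2]=A^{-\omega_1(b,c)/2-\omega_2(b,c)/2}\,b_1c_1\otimes b_2c_2=\psi(x_b)\psi(x_c),
\]
because the sign $(-1)^2$ and the factor $(-A^2)^{-1}$ cancel against $-A^2$, and the Weyl factors cancel since $\omega_2=-\omega_1$. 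The $\beta$ and $\gamma$ relations follow cyclically. Finally, the identity $\Tr_{Sf}([\emptyset])=1$ holds by construction.

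The only real obstacle is bookkeeping: fixing the orientation conventions of the two tetrahedra so that the Weyl exponents in (i) and (iii) come out exactly right. In particular, one must confirm that the opposite commutation relations in the two tensor factors are precisely what makes the exponents cancel in (ii) and (iii). I would tabulate $\omega_1$ once and derive every check from that table.
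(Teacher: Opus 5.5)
Your proposal is correct. The paper gives no proof of this proposition. It is recalled from \cite{PP, PP2} as part of the overview of the $3$d quantum trace, so there is no argument in the text to compare against. What you give is the natural direct verification from the presentation of $\overline{\mathrm{Sk}}(Sf)$ in the preceding proposition.

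The bookkeeping you flag does come out right:
\begin{itemize}
\item From the relations of $\mathbb{S}f$ one gets $\omega_1(a,b)=\omega_1(b,c)=\omega_1(c,a)=-1$. Hence $\omega_1(c+a,b+c)=\omega_1(c,b)+\omega_1(a,b)+\omega_1(a,c)=1$, as required for $\beta_1\alpha_1=A\alpha_1\beta_1$. The other relations follow cyclically.
\item The second factor works because both its $\mathbb{T}$-relations and its commutation form are reversed.
\item In (iii), $[b_1c_1]=A^{1/2}b_1c_1$ and $[b_2c_2]=A^{-1/2}b_2c_2$, so the Weyl factors cancel.
\end{itemize}

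You are also right that no commutation between $\phi(\mathbb{T}^{\otimes 2})$ and $\psi(\mathbb{B}^{\otimes 3})$ is needed. Indeed $[b_1c_1]\otimes 1$ and $b_1\otimes b_2$ differ by a factor of $A$ when swapped. This is harmless because left and right multiplication on $\widetilde{\mathbb{T}}^{\otimes 2}$ commute by associativity.

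The only assertion you leave unchecked is that $\phi$ is injective, which the word ``embedding'' claims. It follows quickly. The monomial $\alpha_1^i\beta_1^j\gamma_1^k$ maps to a unit times the monomial with exponent $(j+k,\,i+k,\,i+j)$. This linear map on $\mathbb{Z}^3$ has determinant $2$, so it is injective. Hence distinct basis monomials go to unit multiples of distinct basis monomials, and likewise in the second factor. This fact is not needed for well-definedness of the bimodule map itself.
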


After splitting into face suspensions, an edge cone $e$ of a tetrahedron is naturally associated to two faces of two face suspensions. 
Let the corresponding face suspension module variables be denoted $a$ and $a'$. 
The element $\hat{z}_e = a\otimes a' \in \bigotimes_{f \in F}\mathbb{S}f$ is called the \textit{square-root quantized shape parameter} associated to the edge $e$.
The co-domain of the quantum trace is a quotient of the quantum torus generated by these square-root quantized shape parameters. 
\begin{defn}
    The \emph{square-root quantum gluing module} $\mathrm{SQGM}_{\mathcal{T}}(Y)$ is the two-sided quotient of the quantum torus 
    \[
    \bigotimes_{T \in \mathcal{T}} \frac{R \langle \hat{z}^{\pm 1}, \hat{z}^{'\pm 1}, \hat{z}^{''\pm 1}\rangle}{\langle \hat{z}\hat{z}' = A \hat{z}' \hat{z},\; \hat{z}'\hat{z}'' = A \hat{z}'' \hat{z}',\; \hat{z}''\hat{z} = A \hat{z} \hat{z}''\rangle},
    \]
    where $\hat{z}$, $\hat{z}'$, $\hat{z}''$ are generators associated to the 3 pairs of opposite edges of $T$ as shown in Figure~\ref{fig:shape_parameter_labels} by the following relations: 
    \begin{enumerate}
    \item For each tetrahedron, 
    \[
    [\hat{z} \hat{z}' \hat{z}''] = -(-A^2)^{\frac 1 2},
    \]
    and
    \[
    1 = \hat{z}^{2} + \hat{z}''^{-2},
    \]
    \item \label{item:gluing_rel} For each internal edge $e$ of the triangulation, 
    \[
    \hat{e} = -A^2,
    \]
    where
    \[
    \hat{e} := \left[\prod_{\hat{z}\text{ abutting }e} \hat{z}\right].
    \] 
    \end{enumerate}
\end{defn}

\begin{figure}
    \centering
    \tdplotsetmaincoords{60}{80}
    \begin{tikzpicture}[tdplot_main_coords, scale=0.8]
    \begin{scope}[scale = 0.8, tdplot_main_coords]
        \newcommand*{\defcoords}{
            \coordinate (a) at (0, 0, 3);
            \coordinate (b) at ({2*sqrt(2)}, 0, -1);
            \coordinate (c) at ({-sqrt(2)}, {sqrt(6)}, -1);
            \coordinate (d) at ({-sqrt(2)}, {-sqrt(6)}, -1);
            \coordinate (e) at (0, 0, -5);
            \coordinate (ab) at ($1/2*(a) + 1/2*(b)$);
            \coordinate (ac) at ($1/2*(a) + 1/2*(c)$);
            \coordinate (ad) at ($1/2*(a) + 1/2*(d)$);
            \coordinate (eb) at ($1/2*(e) + 1/2*(b)$);
            \coordinate (ec) at ($1/2*(e) + 1/2*(c)$);
            \coordinate (ed) at ($1/2*(e) + 1/2*(d)$);
            \coordinate (bc) at ($1/2*(b) + 1/2*(c)$);
            \coordinate (bd) at ($1/2*(b) + 1/2*(d)$);
            \coordinate (cd) at ($1/2*(c) + 1/2*(d)$);
        }
        
        \defcoords
    
        \draw[white, line width=5] (b) -- (c);
        \draw[white, line width=5] (b) -- (d);
        \draw[very thick] (b) -- (c);
        \draw[very thick] (b) -- (d);
        
        \draw[very thick] (c) -- (d);
        
        \draw[white, line width=5] (a) -- (b);
        \draw[very thick] (a) -- (b);
        
        \draw[very thick] (b) -- (c);
        \draw[very thick] (b) -- (d);
        
        \draw[very thick] (a) -- (c);
        \draw[very thick] (a) -- (d);
        
        \filldraw (b) circle (0.05em);
        \filldraw (c) circle (0.05em);
        \filldraw (d) circle (0.05em);
        \filldraw (a) circle (0.05em);
        
        \node[left] at (ad){$\hat{z}$};
        \node[right] at (bc){$\hat{z}$};
        \node[right] at (ab){$\hat{z}''$};
        \node[below] at (cd){$\hat{z}''$};
        \node[right] at (ac){$\hat{z}'$};
        \node[below] at (bd){$\hat{z}'$};
    
    \end{scope}
    \end{tikzpicture}
    \caption{The ordering of the shape parameters on the pairs of edges of a tetrahedron.}
    \label{fig:shape_parameter_labels}
\end{figure}
The quantum trace is then the composition of splitting and applying the (tensor product of the) face suspension quantum trace. 
\begin{thm} 
    There is an $R$-module homomorphism $\Tr_{\mathcal{T}} : \mathrm{Sk}(Y) \rightarrow \mathrm{SQGM}_{\mathcal{T}}(Y)$ defined as the composition 
    \[
    \begin{tikzcd}
    \mathrm{Sk}(Y) \arrow[rrd, bend right, swap, "\Tr_\mathcal{T}"] \arrow[r, "\overline{\sigma}"] & \underset{f\in F}{\overline{\bigotimes}} \overline{\mathrm{Sk}}(Sf) \arrow[r, "\overline{\otimes}_{f\in F} \Tr_{Sf}"] &[3em] \underset{f\in F}{\overline{\bigotimes}} \mathbb{S}f \\
    & & \mathrm{SQGM}_{\mathcal{T}}(Y) \arrow[hookrightarrow, u, ""]
    \end{tikzcd}.
    \]
\end{thm}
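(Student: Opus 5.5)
The plan is to build $\Tr_{\mathcal{T}}$ one stage at a time and check that each stage is well defined. Then I would identify the image of the composite with the subquotient $\mathrm{SQGM}_{\mathcal{T}}(Y)$ of $\overline{\bigotimes}_{f\in F}\mathbb{S}f$.

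\textbf{Step 1: the splitting map.} Theorem~\ref{thm:splitting_map} already gives $\overline{\sigma}$ as a well-defined $R$-linear map into the relative tensor product $\overline{\bigotimes}_{f\in F}\overline{\mathrm{Sk}}(Sf)$. Nothing further is needed here.

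\textbf{Step 2: tensoring the face suspension traces.} I would show that $\otimes_{f}\Tr_{Sf}$ descends to the relative tensor product. The relative tensor product identifies a right action of a biangle algebra $\mathbb{B}$ (respectively a left action of a triangle algebra $\mathbb{T}$) on one face suspension with the corresponding action on the face suspension glued to it. Each $\Tr_{Sf}$ is a $\mathbb{T}^{\otimes 2}$-$\mathbb{B}^{\otimes 3}$-bimodule map. So $\otimes_f\Tr_{Sf}$ sends the defining relations of $\overline{\bigotimes}\,\overline{\mathrm{Sk}}(Sf)$ to the corresponding relations of $\overline{\bigotimes}\,\mathbb{S}f$. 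Here $\overline{\bigotimes}\,\mathbb{S}f$ is the tensor product with the same actions transported through the embeddings $\mathbb{T}^{\otimes2}\hookrightarrow\widetilde{\mathbb{T}}^{\otimes2}$ and $\mathbb{B}^{\otimes3}\hookrightarrow\widetilde{\mathbb{T}}^{\otimes2}$. This is a formal check once the bimodule structures are written out.

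\textbf{Step 3: the image lies in the span of the $\hat z_e$.} Every stated skein in $Sf$ is, modulo the bad-arc ideals, a product of the generators $\alpha_i,\beta_i,\gamma_i,x_a,x_b,x_c$ acting on $[\emptyset]$. Under $\Tr_{Sf}$ each of these becomes a monomial in the face variables $a_i,b_i,c_i$. After gluing, the face variable on an edge cone $e$ of one face suspension is always accompanied by its partner $a'$ on the other face suspension abutting $e$. This is the content of the compatibility of states together with the biangle/triangle actions: $x_a\mapsto a_1\otimes a_2$ pairs the top and bottom variables, and the relative tensor product merges them with the neighbour's. Hence $\Tr_{\mathcal{T}}([L])$ lies in the subalgebra generated by the square-root quantized shape parameters $\hat z_e=a\otimes a'$. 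I would then check that these satisfy the $A$-commutation relations of the tetrahedral quantum tori, which follows directly from the relations in $\widetilde{\mathbb{T}}^{\otimes 2}$.

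\textbf{Step 4: the SQGM relations.} Finally I would check that the relations defining $\mathrm{SQGM}_{\mathcal{T}}(Y)$ hold on this image, so that the map factors through the quotient.
\begin{itemize}
\item The tetrahedron relation $[\hat z\hat z'\hat z'']=-(-A^2)^{1/2}$ should come from the face-suspension relations $(-A^2)\alpha_1\alpha_2=x_bx_c$ and their cyclic analogues, combined with the prefactors $-(-A^2)^{-1/2}$ in $\Tr_{Sf}$.
\item The edge relation $\hat e=-A^2$ should come from following a small loop around an internal edge through the biangles it crosses. Each crossing contributes one $\hat z$ factor, and the half-twist and state relations produce the normalization.
\item The Lagrangian relation $1=\hat z^2+\hat z''^{-2}$ is imposed in the target rather than derived. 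I would not try to prove it from skeins; the $R$-module map simply takes values in the quotient, where it holds by definition.
\end{itemize}

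\textbf{Main obstacle.} I expect the hardest part to be Step~3 together with the edge relation in Step~4. This is the careful bookkeeping of how the variables of adjacent face suspensions are identified by the relative tensor product, and of the powers of $A$ and $(-A^2)^{1/2}$ coming from Weyl ordering $[\cdot]$. I would carry this out first on a single face suspension, then on a single tetrahedron (four face suspensions meeting in its interior), and finally on the star of one internal edge.
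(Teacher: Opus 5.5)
\paragraph{The gap in Step 4.}
The paper does not reprove this statement; it quotes it from the earlier 3d quantum trace work. Its content is that the composite $\mathrm{Sk}(Y)\to\overline{\bigotimes}_{f}\mathbb{S}f$ lands in a copy of $\mathrm{SQGM}_{\mathcal{T}}(Y)$ sitting \emph{inside} $\overline{\bigotimes}_{f}\mathbb{S}f$ (the vertical inclusion in the diagram). Your Steps 1--3 fit this, but Step 4 has a genuine gap. To corestrict the composite to a submodule you need two things:
\begin{enumerate}
\item[(i)] the elements $\hat z_e=a\otimes a'$ must satisfy every defining relation of $\mathrm{SQGM}_{\mathcal{T}}(Y)$ as identities in $\overline{\bigotimes}_{f}\mathbb{S}f$. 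This includes the fact that the two opposite edges of a tetrahedron give the same element, which you never check: $\mathrm{SQGM}_{\mathcal{T}}(Y)$ has one generator per pair of opposite edges, while $\hat z_e$ is defined per edge cone.
\item[(ii)] the resulting map $\mathrm{SQGM}_{\mathcal{T}}(Y)\to\overline{\bigotimes}_{f}\mathbb{S}f$ must be injective, so that an element of its image determines a unique element of $\mathrm{SQGM}_{\mathcal{T}}(Y)$.
\end{enumerate}
For the Lagrangian relation you abandon (i) and declare that $1=\hat z^2+\hat z''^{-2}$ holds ``by definition'' in the target. That move is only available if $\mathrm{SQGM}_{\mathcal{T}}(Y)$ is a \emph{quotient} of the $\hat z$-span. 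In that reading, what you must prove is the reverse inclusion: every relation among the $a\otimes a'$ that holds in $\overline{\bigotimes}_{f}\mathbb{S}f$ must already hold in $\mathrm{SQGM}_{\mathcal{T}}(Y)$, since otherwise the span does not map to the quotient at all. Step 4 only checks the other direction. As written it therefore establishes neither version, and neither (ii) nor its quotient-side counterpart is addressed anywhere.

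\paragraph{Where the relations actually come from.}
The relations you do propose to derive cannot come from the sources you name. Under $\Tr_{Sf}$, the face-suspension relation $(-A^2)\alpha_1\alpha_2=x_bx_c$ becomes $[b_1c_1][b_2c_2]=(b_1\otimes b_2)(c_1\otimes c_2)$. This identity already holds in the quantum torus $\mathbb{S}f$; that is exactly why $\Tr_{Sf}$ is well defined. So it says nothing about $[\hat z\hat z'\hat z'']$, which involves edge-cone variables from several different face suspensions around one barycenter.

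More decisively, in the plain tensor product of the quantum tori $\mathbb{S}f$, a nontrivial monomial such as $\hat z\hat z'\hat z''$ or $\hat e$ can never equal a scalar, and $1$, $\hat z^2$, $\hat z''^{-2}$ are linearly independent. So every $\mathrm{SQGM}$ relation can hold only because of the identifications imposed by the relative tensor product at the barycenters and around the edges. This reflects the fact that $\overline{\sigma}$ must respect isotopies and skein relations of strands passing through those loci; for instance, a small contractible loop near a barycenter that crosses several edge cones must evaluate to $-A^2-A^{-2}$.

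Extracting the tetrahedron relation, the identification of opposite edges, and above all the quantum Lagrangian relation from that structure, and checking that no further relations appear, is the real content of the theorem. Your plan explicitly sets this aside.
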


\subsection{The length conjecture}

We can finally state the length conjecture.
Start with an oriented, ideally triangulated $r$-cusped hyperbolic $3$-manifold $M$ with triangulation $\mathcal{T}$, together with a designated cusp $\partial_i M$ and a slope $(\mathfrak{p}, \mathfrak{q})$ on $\partial_i M$ such that $M(\mathfrak{p},\mathfrak{q}) = S^3 \setminus \mathcal{K}$ for some hyperbolic link $\mathcal{K} \subset S^3$.
Call a framed link $K \subset M(\mathfrak{p},\mathfrak{q})$ \textit{even} if it represents the zero homology class in $H_1(M(\mathfrak{p},\mathfrak{q}); \mathbb{Z}_2) \cong \mathbb{Z}_2^{r-1}$.
Given such an even link $K \subset M(\mathfrak{p},\mathfrak{q})$ that is disjoint from the core of the filling solid torus, $K$ lifts canonically to a framed link in $M$, which we also denote by $K \subset M$.
The $3$d quantum trace map of Section~\ref{sec:quantum_trace} produces an element $\Tr_\mathcal{T}(K) \in \mathrm{SQGM}_\mathcal{T}(M)$.
This element can be inserted into the integrand of the Dehn-filled state integral $Z_\hbar^{(\mathfrak{p},\mathfrak{q})}(\mathcal{T}; X)$ of Equation \eqref{dehnfillpf}, yielding a modified state integral $Z_\hbar^{(\mathfrak{p},\mathfrak{q})}(\mathcal{T}; X; \Tr_\mathcal{T}(K))$.
Expanding the ratio of Dehn-filled state integrals around the geometric saddle point at $X = 0$ yields a formal power series in $\hbar$:
\[
\log \left( \frac{Z_\hbar^{(\mathfrak{p},\mathfrak{q})}(\mathcal{T}; 0; \Tr_\mathcal{T}(K))}{Z_\hbar^{(\mathfrak{p},\mathfrak{q})}(\mathcal{T}; 0)} \right) \xrightarrow{\hbar \to 0} \sum_{s \geq 0} W_s^{\mathrm{hyp}}(K; \mathcal{T}, \mathfrak{p}, \mathfrak{q})\, \hbar^s.
\]
The leading coefficient $W_0^{\mathrm{hyp}}(K; \mathcal{T}, \mathfrak{p}, \mathfrak{q})$ is a simple function of the complex hyperbolic length $\ell_\mathbb{C}(K)$ of the geodesic representative of $K$ in $M(\mathfrak{p},\mathfrak{q})$:
\[
W_0^{\mathrm{hyp}}(K; \mathcal{T}, \mathfrak{p}, \mathfrak{q}) = \log \left( - 2 \cosh\left( \tfrac{1}{2} \ell_\mathbb{C}(K) \right) \right).
\]
The length conjecture states that this asymptotic series can be completely recovered from the colored Jones polynomial with an ``insertion'' of $K$.
\begin{conj}[Dehn-filled length conjecture, after \cite{AGLR}]
For every oriented, ideally triangulated $r$-cusped hyperbolic $3$-manifold $(M, \mathcal{T})$, every slope $(\mathfrak{p},\mathfrak{q})$ on a designated cusp of $M$ with $M(\mathfrak{p},\mathfrak{q}) = S^3 \setminus \mathcal{K}$ for some hyperbolic link $\mathcal{K} \subset S^3$, and every even framed link $K \subset M(\mathfrak{p},\mathfrak{q})$ disjoint from the core of the filling solid torus, the asymptotic expansion of a particular ratio of colored Jones polynomials, evaluated at the root of unity $q = e^{2 \pi i / n}$, matches the perturbative expansion of the Dehn-filled state integral with insertion of $\Tr_\mathcal{T}(K)$, under the identification $\hbar = 2 \pi i / n$:
\[
\left. \log \left( \frac{J_{n, 2}(\mathcal{K} \cup K;q)}{J_n(\mathcal{K};q)} \right) \right|_{q =  e^{2 \pi i / n}} \xrightarrow{n \to \infty} \sum_{s \geq 0} W_s^{\mathrm{hyp}}(K; \mathcal{T}, \mathfrak{p}, \mathfrak{q})\, \hbar^s.
\]
\end{conj}
In the special case where $M = S^3 \setminus \mathcal{K}$ has a single cusp and no Dehn filling is performed, the Dehn-filled state integral $Z_\hbar^{(\mathfrak{p},\mathfrak{q})}(\mathcal{T}; X)$ reduces to the standard state integral $Z_\hbar(\mathcal{T}; X)$ of Definition~\ref{def:state_integral}, and the conjecture above recovers the original length conjecture of \cite{AGLR}.

%%%%%%%%%%%%%%%%%%%%%%%%%%%%%%%%%%%%%%%%%%%%%%%%

\section{Skein modules and triangulations of twist knots} \label{sec:skein_modules_and_triangulations}

This section is dedicated to establishing our notation for the basis of the skein module of twist knots, in addition to obtaining a triangulation of twist knot complements through a Dehn filling on the complement of the Whitehead link. 

\subsection{A basis for the skein module of twist knots}

For any integer $p$, we define the twist knot $\mathcal{K}_p$ by inserting $p$ full twists into the twist region of the diagram shown in Figure~\ref{fig:twist_knot_curves}, where the sign of $p$ dictates the direction of the twists. Familiar examples include the left-handed trefoil knot ($p=1$) and the figure-eight knot ($p=-1$).

To investigate the length conjecture for this family, we first need a clear picture of the skein module of the knot complement.
We consider two specific curves in the complement of $\mathcal{K}_p$, denoted $K_m$ and $K_b$. 
The curve $K_m$ is the standard meridian of the knot, while $K_b$ is a second curve that wraps around the twist region; these curves are shown in Figure~\ref{fig:twist_knot_curves}.
Equip them with the blackboard framing.

\begin{figure}[h]
    \centering
    \includegraphics[scale=.75]{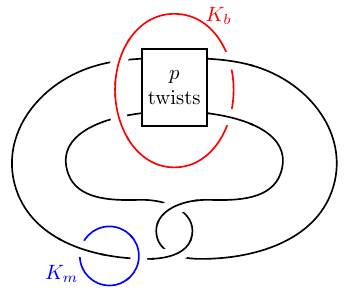}
    \caption{The twist knot $\mathcal{K}_p$ and the curves $K_m$ (shown in blue) and $K_b$ (shown in red).}
    \label{fig:twist_knot_curves}
\end{figure}

The structure of the skein module can be described explicitly using these skeins:

\begin{thm}[\cite{BL}] \label{thm:twist_knot_basis}
The skein module $\mathrm{Sk}(S^3 \setminus \mathcal{K}_p)$ is free with basis 
\[ \{K_m^k K_b^l \mid k \ge 0, \ 0 \le l \le 2|p|\}. \]
\end{thm}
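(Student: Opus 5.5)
The statement is Bullock--Lo Faro's theorem. I would prove it in two halves: a spanning argument that reduces every skein to the claimed monomials, and an independence argument for those monomials.

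\emph{Spanning.} I would present $S^3 \setminus \mathcal{K}_p$ as a genus-two handlebody $H$ with a single $2$-handle attached. Concretely, take a neighborhood of $\mathcal{K}_p$ union an unknotting tunnel and use the standard Heegaard splitting of a two-bridge knot exterior. The skein module of $H$ is the free polynomial algebra $R[x,y,z]$ generated by the three curves on the boundary surface that go around the two handles and their ``product''. Attaching the $2$-handle imposes the handle-slide relations $\alpha = \mathrm{sl}(\alpha)$, one for each skein $\alpha$ in $H$ slid over the attaching curve. I would choose the generators so that $x$ corresponds to the meridian $K_m$ and $y$ to the curve $K_b$ encircling the twist region. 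The $p$ full twists in the twist region then make the handle slide of $y^{l}$ produce a relation of the form
\[
y^{2|p|+1} = \sum_{j \le 2|p|} c_j(x)\, y^j
\]
modulo lower-order terms. I would establish this by applying the Kauffman bracket relation \eqref{item:skeinRelation1} to the $2|p|$ crossings of the twist region and tracking leading terms. Combined with a filtration by degree in $z$, used to eliminate $z$ in favor of $x$ and $y$, this shows that the monomials $K_m^k K_b^l$ with $0 \le l \le 2|p|$ span.

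\emph{Independence.} This is the main obstacle, because skein modules often have torsion. I would first check that, after localizing or specializing $A=-1$, the spanning set maps to a basis of the coordinate ring of the $\mathrm{SL}(2,\mathbb{C})$ character variety. For twist knots, the Riley polynomial has degree $|p|$ in the trace of the $K_b$-type element and is irreducible. The character variety is therefore a degree-$2|p|+1$ cover of the $x$-line: one component for the abelian characters together with the nonabelian ones. This matches the count $0 \le l \le 2|p|$.

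To lift this from $A=-1$ to $R$, I would exhibit the relations coming from the handle slides as a Gröbner-type rewriting system in $R[x,y,z]$ whose leading terms are exactly $y^{2|p|+1}$ and $z$. I would then verify that all overlaps resolve, in the style of the diamond lemma, so that no further relations arise. This overlap check is the step I expect to be hardest. I would try first to show that the slides of $x^a y^b$ generate everything, using that $x$, being the meridian, slides trivially up to framing.
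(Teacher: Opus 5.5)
The paper contains no proof of this statement. It is quoted from Bullock--Lo Faro~\cite{BL}, whose argument, like yours, computes the relations obtained by sliding skeins in a genus-two handlebody over a single $2$-handle. Your proposal has two genuine problems.

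\textbf{The count is wrong for $p>0$.} Your $A=-1$ check is miscomputed, and done correctly it shows the range in the statement is off by one when $p>0$. In the paper's convention ($\mathcal{K}_1$ the trefoil, $\mathcal{K}_{-1}$ the figure-eight), $\mathcal{K}_p$ is the two-bridge knot of determinant $\alpha=|4p-1|$. Its Riley polynomial has degree $(\alpha-1)/2$: that is $2|p|$ for $p<0$ and $2p-1$ for $p>0$, not $|p|$. (Also, $|p|$ plus one abelian component would not give your $2|p|+1$ anyway.) So the character variety has degree $2|p|+1$ over the $x$-line when $p<0$, but only $2p$ when $p>0$. For the trefoil it is $(z-2)(z-x^2+1)=0$, with $x=\mathrm{tr}\,a$ and $z=\mathrm{tr}(ab^{-1})$ for Wirtinger generators $a,b$.

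Since $K_m$ acts peripherally, an $R$-basis $\{K_m^kK_b^l\}$ makes $\{K_b^l\}$ a basis over $R[K_m]$. The number of admissible $l$ is therefore a rank, hence an invariant. The known computations (Bullock for the trefoil, Le for two-bridge knots in general) give this rank as $(\alpha+1)/2$. Hence for $p>0$, including the range $p\ge 6$ used later, the family in the statement has one element too many and cannot be a basis. The range should be $0\le l\le 2p-1$, and the slide relation you predict should have leading term $y^{2p}$ rather than $y^{2p+1}$. As recorded, the uniform range $0\le l\le 2|p|$ is correct only for $p<0$, so no argument can prove it for all $p$.

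\textbf{The independence half is not actually argued, for any $p$.} Independence of the images at $A=-1$ only shows that the kernel of the map to $\mathrm{Sk}$ from the free module $F$ on your spanning set lies in $(A+1)F$. That yields freeness only if $\mathrm{Sk}$ has no $(A+1)$-torsion, which is part of what must be proved; a cyclic module $R/(A+1)$ passes your test.

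Everything therefore rests on the rewriting step, and that step is mis-framed. The slide relations span an $R$-submodule $N\subset R[x,y,z]$, namely the kernel of $\mathrm{Sk}(H)\to\mathrm{Sk}(S^3\setminus\mathcal{K}_p)$. There is no reason for $N$ to be an ideal: that would make stacking with the non-peripheral curve $y$ a well-defined operator on the skein module of the exterior. So a Gröbner basis with leading terms $y^{2|p|+1}$ and $z$ is not the relevant object, and there are no overlaps to resolve. What you need is:
\begin{enumerate}
\item for every non-standard monomial $m=x^ay^bz^c$, an element of $N$ with leading term $m$ and unit coefficient;
\item a proof that these elements span all of $N$.
\end{enumerate}

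The second part cannot be organized around slides of the monomials $x^ay^b$. A slide relation $\alpha-\mathrm{sl}(\alpha)$ depends on the band used. It is linear not in $\alpha\in R[x,y,z]$ but in the relative skein obtained by cutting $\alpha$ at the foot of the band.

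The missing idea is to exploit exactly this linearity. Slide relations are the image of an $R$-linear map out of the relative skein module of $H$ with two marked points next to the attaching curve. So it suffices to slide a spanning set of that relative module (multicurves times arcs, including arcs that wind around the handles) and to show that the resulting family is triangular for a suitable order. That computation is the substance of the theorem, and your proposal defers it.
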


\subsection{A triangulation of the Whitehead link complement}
We will study twist knots via Dehn filling on the Whitehead link. 
To this end, we first need an ideal triangulation of the Whitehead link complement. 

Begin with the standard  diagram of the Whitehead link shown in Figure~\ref{fig:whitehead_link}.
\begin{figure}[h]
    \centering
    \includegraphics[scale=.75]{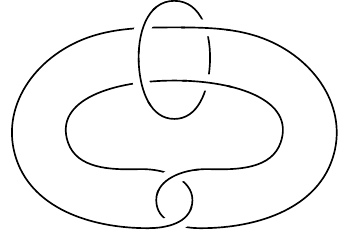}
    \caption{A standard diagram of the Whitehead link.}
    \label{fig:whitehead_link}
\end{figure}
To construct the triangulation, we use a hybrid approach that integrates the standard polyhedral decomposition algorithm (see, e.g., \cite{M2}) with the techniques specifically designed for fully augmented links \cite{Pur2}. 

As in the standard algorithm, we start by slicing the link complement along the projection sphere, which separates the space into two 3-balls. 
Call the upper ball $B_+$ and the lower ball $B_-$. 
The loop surrounding the parallel strands is a crossing circle, so according to the algorithm in \cite{Pur2}, we additionally wish to cut along the disk bounded by this circle. 
The projection sphere splits this spanning disk into two half-disks, one lying in $B_+$ and one in $B_-$. 
We slice the respective 3-balls open across these half-disks. 
Figure~\ref{fig:half_disk_cuts} illustrates this slicing process specifically for the top ball. 

\begin{figure}[h]
    \centering
    \includegraphics{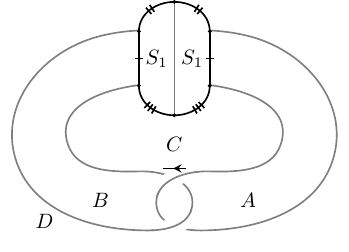}
    \caption{Slicing the upper 3-ball ($B_+$) along the half-disk bounded by the crossing circle. A similar process is carried out in $B_-$. We've also labeled regions of the projection and highlighted the future edges of our polyhedral decomposition in black.}
    \label{fig:half_disk_cuts}
\end{figure}

It is now straightforward to follow the usual triangulation procedure. 
Keeping track of how all of the faces glue back together, we obtain the final configurations on the boundaries of $B_+$ and $B_-$ shown in Figure~\ref{fig:3ball_boundaries}.

\begin{figure}[h]
    \centering
    \begin{subfigure}[t]{.45\textwidth}
        \centering
        \begin{tikzpicture}[scale=1]
        \begin{scope}[thick,
        decoration = {markings,mark=at position 0.5 with 
        {\draw(0,-3pt)--(0,3pt);}}
        ] 
        \draw[thick,postaction={decorate}] (1,-1)--(2,1);
        \draw[thick,postaction={decorate}] (-1,-1)--(-2,1);
        \end{scope}            
        \begin{scope}[thick,decoration = {markings,mark=between positions 0.48 and .52 step .04 with 
        {\draw(0,-3pt)--(0,3pt);}}
        ] 
        \draw[thick,postaction={decorate}] (0,0)--(2,1);
        \draw[thick,postaction={decorate}] (0,0)--(-2,1);
        \end{scope}       
        \begin{scope}[thick,decoration = {markings,mark=between positions 0.46 and .54 step .04 with 
        {\draw(0,-3pt)--(0,3pt);}}
        ] 
        \draw[thick,postaction={decorate}] (0,0)--(1,-1);
        \draw[thick,postaction={decorate}] (0,0)--(-1,-1);
        \end{scope}
        \begin{scope}[thick,decoration={
        markings,
        mark=at position 0.5 with \arrow{stealth}}
        ] 
        \draw[thick,postaction={decorate}] (1,-1) -- (-1,-1);
        \draw[thick,postaction={decorate}] (1,-1) ..controls (-1.75,-1.75) .. (-2,1);
        \draw[thick,postaction={decorate}] (2,1) ..controls (4,-3) and (-4,-3).. (-2,1);
        \end{scope}
        \node[] at (0,-.5) {$C$};
        \node[] at (1,-.25) {$S_1$};
        \node[] at (-1,-.25) {$S_1$};
        \node[] at (1.25,-1) {$A$};
        \node[] at (-1.35,-1) {$B$};
        \node[] at (0,.5) {$D$};
        \end{tikzpicture}
        \caption{The triangulation of $B_+$. Face $D$ wraps above the diagram.}
    \end{subfigure}
    \hspace{1cm}
    \begin{subfigure}[t]{.45\textwidth}
        \centering  
        \begin{tikzpicture}[scale=1]
        \begin{scope}[thick,
        decoration = {markings,mark=at position 0.5 with 
        {\draw(0,-3pt)--(0,3pt);}}
        ] 
        \draw[thick,postaction={decorate}] (1,-1)--(2,1);
        \draw[thick,postaction={decorate}] (-1,-1)--(-2,1);
        \end{scope}       
        \begin{scope}[thick,decoration = {markings,mark=between positions 0.48 and .52 step .04 with 
        {\draw(0,-3pt)--(0,3pt);}}
        ] 
        \draw[thick,postaction={decorate}] (0,0)--(2,1);
        \draw[thick,postaction={decorate}] (0,0)--(-2,1);
        \end{scope}       
        \begin{scope}[thick,decoration = {markings,mark=between positions 0.46 and .54 step .04 with 
        {\draw(0,-3pt)--(0,3pt);}}
        ] 
        \draw[thick,postaction={decorate}] (0,0)--(1,-1);
        \draw[thick,postaction={decorate}] (0,0)--(-1,-1);
        \end{scope}
        \begin{scope}[thick,decoration={
        markings,
        mark=at position 0.5 with \arrow{stealth}}
        ] 
        \draw[thick,postaction={decorate}] (1,-1) -- (-1,-1);
        \draw[thick,postaction={decorate}] (2,1) ..controls (1.75,-1.75) .. (-1,-1);
        \draw[thick,postaction={decorate}] (2,1) ..controls (4,-3) and (-4,-3).. (-2,1);
        \end{scope}
        \node[] at (0,-.5) {$C$};
        \node[] at (1,-.25) {$S_2$};
        \node[] at (-1,-.25) {$S_2$};
        \node[] at (1.35,-1) {$A$};
        \node[] at (-1.4,-1) {$B$};
        \node[] at (0,.5) {$D$};
        \end{tikzpicture}
        \caption{The triangulation of $B_-$. Face $D$ wraps below the diagram.}
    \end{subfigure}    
    \caption{Triangulations of the boundaries of the upper and lower 3-balls induced by the link diagram and the half-disk cuts.}
    \label{fig:3ball_boundaries}
\end{figure}

The polyhedral decomposition can be cut into four ideal tetrahedra, depicted in Figure~\ref{fig:Whitehead_tetrahedra}. 

\begin{figure}[htbp]
    \centering
    \begin{gather*}
    \vcenter{\hbox{
    \tdplotsetmaincoords{60}{80}
    \begin{tikzpicture}[tdplot_main_coords]
    \begin{scope}[scale = 0.8, tdplot_main_coords]
        \coordinate (o) at (0, 0, 0);
        \coordinate (a) at (0, 0, 3);   
        \coordinate (b) at ({2*sqrt(2)}, 0, -1);
        \coordinate (c) at ({-sqrt(2)}, {sqrt(6)}, -1);   
        \coordinate (d) at ({-sqrt(2)}, {-sqrt(6)}, -1);
        \coordinate (ab) at ({sqrt(2)}, 0, 1);
        \coordinate (ac) at ({-sqrt(2)/2}, {sqrt(6)/2}, 1);
        \coordinate (ad) at ({-sqrt(2)/2}, {-sqrt(6)/2}, 1);
        \coordinate (bc) at ({sqrt(2)/2}, {sqrt(6)/2}, -1);
        \coordinate (bd) at ({sqrt(2)/2}, {-sqrt(6)/2}, -1);
        \coordinate (cd) at ({-sqrt(2)}, 0, -1);
        %add back edge
        \begin{scope}[thick,decoration = {markings,mark=between positions 0.46 and .54 step .04 with 
        {\draw(0,-3pt)--(0,3pt);}}
        ] 
        \draw[postaction={decorate},very thick] (d) -- (c);
        \end{scope}
        %adds appearance of crossing info
        \draw[white, ultra thick] (a) -- (b);   
        %draw "blue edges"
        \begin{scope}[thick,decoration={
        markings,
        mark=at position 0.5 with \arrow{stealth}}
        ] 
        \draw[postaction={decorate},very thick] (b) -- (a);
        \draw[postaction={decorate},very thick] (b) -- (d);
        \end{scope}
        %draw other edges 
        \begin{scope}[thick,
        decoration = {markings,mark=between positions 0.48 and .52 step .04 with 
        {\draw(0,-3pt)--(0,3pt);}}
        ] 
        \draw[postaction={decorate},very thick] (c) -- (a);
        \end{scope} 
        \begin{scope}[thick,
        decoration = {markings,mark=at position 0.5 with 
        {\draw(0,-3pt)--(0,3pt);}}
        ] 
        \draw[postaction={decorate},very thick] (d) -- (a);
        \end{scope}   
        \begin{scope}[thick,decoration = {markings,mark=between positions 0.46 and .54 step .04 with 
        {\draw(0,-3pt)--(0,3pt);}}
        ] 
        \draw[postaction={decorate},very thick] (b) -- (c);
        \end{scope}
        \filldraw (a) circle (0.05em);
        \filldraw (b) circle (0.05em);
        \filldraw (c) circle (0.05em);
        \filldraw (d) circle (0.05em);
        \node[anchor = south east] at (ad) {$z''_1$};
        \node[anchor = south west] at (ac) {$z_1$};
        \node[anchor = north east] at (bd) {$z_1$};
        \node[anchor = north west] at (bc) {$z''_1$};
        \node[above left, yshift = 5mm] at (ab) {$z'_1$};
        \node[below] at (cd) {$z'_1$};
        %\node[above] at (a) {$a$};
        %\node[below] at (b) {$b$};
        %\node[right] at (c) {$c$};
        %\node[left] at (d) {$d$};
    \end{scope}
    \end{tikzpicture}
    }}
    \hspace{1.5cm}
    \vcenter{\hbox{
    \tdplotsetmaincoords{60}{80}
    \begin{tikzpicture}[tdplot_main_coords]
    \begin{scope}[scale = 0.8, tdplot_main_coords]
        \coordinate (o) at (0, 0, 0);
        \coordinate (a) at (0, 0, 3);   
        \coordinate (b) at ({2*sqrt(2)}, 0, -1);
        \coordinate (c) at ({-sqrt(2)}, {sqrt(6)}, -1);   
        \coordinate (d) at ({-sqrt(2)}, {-sqrt(6)}, -1);
        \coordinate (ab) at ({sqrt(2)}, 0, 1);
        \coordinate (ac) at ({-sqrt(2)/2}, {sqrt(6)/2}, 1);
        \coordinate (ad) at ({-sqrt(2)/2}, {-sqrt(6)/2}, 1);
        \coordinate (bc) at ({sqrt(2)/2}, {sqrt(6)/2}, -1);
        \coordinate (bd) at ({sqrt(2)/2}, {-sqrt(6)/2}, -1);
        \coordinate (cd) at ({-sqrt(2)}, 0, -1);
        %add back edge
        \begin{scope}[thick,decoration = {markings,mark=between positions 0.48 and .52 step .04 with 
        {\draw(0,-3pt)--(0,3pt);}}
        ] 
        \draw[postaction={decorate},very thick] (d) -- (c);
        \end{scope}
        %adds appearance of crossing info
        \draw[white, ultra thick] (a) -- (b);   
        %draw "blue edges"
        \begin{scope}[thick,decoration={
        markings,
        mark=at position 0.5 with \arrow{stealth}}
        ] 
        \draw[postaction={decorate},very thick] (b) -- (a);
        \draw[postaction={decorate},very thick] (b) -- (d);
        \end{scope}
        %draw other edges 
        \begin{scope}[thick,decoration = {markings,mark=between positions 0.46 and .54 step .04 with 
        {\draw(0,-3pt)--(0,3pt);}}
        ] 
        \draw[postaction={decorate},very thick] (a) -- (c);
        \end{scope} 
        \begin{scope}[thick,
        decoration = {markings,mark=at position 0.5 with 
        {\draw(0,-3pt)--(0,3pt);}}
        ] 
        \draw[postaction={decorate},very thick] (d) -- (a);
        \end{scope}  
        \begin{scope}[thick,decoration = {markings,mark=between positions 0.48 and .52 step .04 with 
        {\draw(0,-3pt)--(0,3pt);}}
        ] 
        \draw[postaction={decorate},very thick] (b) -- (c);
        \end{scope}
        \filldraw (a) circle (0.05em);
        \filldraw (b) circle (0.05em);
        \filldraw (c) circle (0.05em);
        \filldraw (d) circle (0.05em);
        \node[anchor = south east] at (ad) {$z''_2$};
        \node[anchor = south west] at (ac) {$z_2$};
        \node[anchor = north east] at (bd) {$z_2$};
        \node[anchor = north west] at (bc) {$z''_2$};
        \node[above left, yshift = 5mm] at (ab) {$z'_2$};
        \node[below] at (cd) {$z'_2$};
        %\node[above] at (a) {$N$};
        %\node[below] at (b) {$E$};
        %\node[right] at (c) {$S$};
        %\node[left] at (d) {$W$};
    \end{scope}
    \end{tikzpicture}
    }}\\
    \vcenter{\hbox{
    \tdplotsetmaincoords{60}{80}
    \begin{tikzpicture}[tdplot_main_coords]
    \begin{scope}[scale = 0.8, tdplot_main_coords]
        \coordinate (o) at (0, 0, 0);
        \coordinate (a) at (0, 0, 3);   
        \coordinate (b) at ({2*sqrt(2)}, 0, -1);
        \coordinate (c) at ({-sqrt(2)}, {sqrt(6)}, -1);   
        \coordinate (d) at ({-sqrt(2)}, {-sqrt(6)}, -1);
        \coordinate (ab) at ({sqrt(2)}, 0, 1);
        \coordinate (ac) at ({-sqrt(2)/2}, {sqrt(6)/2}, 1);
        \coordinate (ad) at ({-sqrt(2)/2}, {-sqrt(6)/2}, 1);
        \coordinate (bc) at ({sqrt(2)/2}, {sqrt(6)/2}, -1);
        \coordinate (bd) at ({sqrt(2)/2}, {-sqrt(6)/2}, -1);
        \coordinate (cd) at ({-sqrt(2)}, 0, -1);
        %add back edge
        \begin{scope}[thick,decoration = {markings,mark=between positions 0.46 and .54 step .04 with 
        {\draw(0,-3pt)--(0,3pt);}}
        ] 
        \draw[postaction={decorate},very thick] (d) -- (c);
        \end{scope}
        %adds appearance of crossing info
        \draw[white, ultra thick] (a) -- (b);   
        %draw "blue edges"
        \begin{scope}[thick,decoration={
        markings,
        mark=at position 0.5 with \arrow{stealth}}
        ] 
        \draw[postaction={decorate},very thick] (b) -- (a);
        \draw[postaction={decorate},very thick] (d) -- (a);
        \end{scope}
        %draw other edges 
        \begin{scope}[thick,
        decoration = {markings,mark=between positions 0.48 and .52 step .04 with 
        {\draw(0,-3pt)--(0,3pt);}}
        ] 
        \draw[postaction={decorate},very thick] (b) -- (c);
        \end{scope} 
        \begin{scope}[thick,
        decoration = {markings,mark=between positions 0.48 and .52 step .04 with 
        {\draw(0,-3pt)--(0,3pt);}}
        ] 
        \draw[postaction={decorate},very thick] (a) -- (c);
        \end{scope} 
        \begin{scope}[thick,
        decoration = {markings,mark=at position 0.5 with 
        {\draw(0,-3pt)--(0,3pt);}}
        ] 
        \draw[postaction={decorate},very thick] (b) -- (d);
        \end{scope}    
        \filldraw (a) circle (0.05em);
        \filldraw (b) circle (0.05em);
        \filldraw (c) circle (0.05em);
        \filldraw (d) circle (0.05em);
        \node[anchor = south east] at (ad) {$z'_3$};
        \node[anchor = south west] at (ac) {$z''_3$};
        \node[anchor = north east] at (bd) {$z''_3$};
        \node[anchor = north west] at (bc) {$z'_3$};
        \node[above left, yshift = 5mm] at (ab) {$z_3$};
        \node[below] at (cd) {$z_3$};
        %\node[above] at (a) {$N$};
        %\node[below] at (b) {$E$};
        %\node[right] at (c) {$S$};
        %\node[left] at (d) {$W$};
    \end{scope}
    \end{tikzpicture}
    }}
    \hspace{1.5cm}
    \vcenter{\hbox{
    \tdplotsetmaincoords{60}{80}
    \begin{tikzpicture}[tdplot_main_coords]
    \begin{scope}[scale = 0.8, tdplot_main_coords]
        \coordinate (o) at (0, 0, 0);
        \coordinate (a) at (0, 0, 3);   
        \coordinate (b) at ({2*sqrt(2)}, 0, -1);
        \coordinate (c) at ({-sqrt(2)}, {sqrt(6)}, -1);   
        \coordinate (d) at ({-sqrt(2)}, {-sqrt(6)}, -1);
        \coordinate (ab) at ({sqrt(2)}, 0, 1);
        \coordinate (ac) at ({-sqrt(2)/2}, {sqrt(6)/2}, 1);
        \coordinate (ad) at ({-sqrt(2)/2}, {-sqrt(6)/2}, 1);
        \coordinate (bc) at ({sqrt(2)/2}, {sqrt(6)/2}, -1);
        \coordinate (bd) at ({sqrt(2)/2}, {-sqrt(6)/2}, -1);
        \coordinate (cd) at ({-sqrt(2)}, 0, -1);
        %add back edge
        \begin{scope}[thick,
        decoration = {markings,mark=between positions 0.48 and .52 step .04 with 
        {\draw(0,-3pt)--(0,3pt);}}
        ] 
        \draw[postaction={decorate},very thick] (d) -- (c);
        \end{scope}
        %adds appearance of crossing info
        \draw[white, ultra thick] (a) -- (b);   
        %draw "blue edges"
        \begin{scope}[thick,decoration={
        markings,
        mark=at position 0.5 with \arrow{stealth}}
        ] 
        \draw[postaction={decorate},very thick] (b) -- (a);
        \draw[postaction={decorate},very thick] (d) -- (a);
        \end{scope}
        %draw other edges 
        \begin{scope}[thick,
        decoration = {markings,mark=between positions 0.46 and .54 step .04 with 
        {\draw(0,-3pt)--(0,3pt);}}
        ] 
        \draw[postaction={decorate},very thick] (b) -- (c);
        \end{scope} 
        \begin{scope}[thick,
        decoration = {markings,mark=between positions 0.46 and .54 step .04 with 
        {\draw(0,-3pt)--(0,3pt);}}
        ] 
        \draw[postaction={decorate},very thick] (a) -- (c);
        \end{scope} 
        \begin{scope}[thick,
        decoration = {markings,mark=at position 0.5 with 
        {\draw(0,-3pt)--(0,3pt);}}
        ] 
        \draw[postaction={decorate},very thick] (b) -- (d);
        \end{scope}    
        \filldraw (a) circle (0.05em);
        \filldraw (b) circle (0.05em);
        \filldraw (c) circle (0.05em);
        \filldraw (d) circle (0.05em);
        \node[anchor = south east] at (ad) {$z'_4$};
        \node[anchor = south west] at (ac) {$z''_4$};
        \node[anchor = north east] at (bd) {$z''_4$};
        \node[anchor = north west] at (bc) {$z'_4$};
        \node[above left, yshift = 5mm] at (ab) {$z_4$};
        \node[below] at (cd) {$z_4$};
        %\node[above] at (a) {$N$};
        %\node[below] at (b) {$E$};
        %\node[right] at (c) {$S$};
        %\node[left] at (d) {$W$};
    \end{scope}
    \end{tikzpicture}
    }}
    \end{gather*}
    \caption{A triangulation of the Whitehead link complement.}
    \label{fig:Whitehead_tetrahedra}
\end{figure}

With shape parameters assigned as in Figure~\ref{fig:Whitehead_tetrahedra}, it is straightforward to read off the (log) gluing constraints: 
\begin{eqnarray}
\mathcal{C}_{1}:&&Z''_{1} + Z''_{2} + Z''_{3} + Z''_{4} = -2\pi i,\nonumber\\
\mathcal{C}_{2}:&&Z_{1} + Z'_{2} + Z''_{2} + Z'_{3} + Z''_{3} + Z_{4} = -2\pi i,\nonumber\\
\mathcal{C}_{3}:&&Z'_{1} + Z''_{1} + Z_{2} + Z_{3} + Z'_{4} + Z''_{4} = -2\pi i,\nonumber\\
\mathcal{C}_{4}:&&Z_{1} + Z'_{1} + Z_{2} + Z'_{2} + Z_{3} + Z'_{3} + Z_{4} + Z'_{4} = -2\pi i.
\end{eqnarray}

This triangulation also induces triangulations on the toroidal cusps. 
We record these triangulations in Figure~\ref{fig:cusp_triangulations}.

\begin{figure}[htbp]
    \centering
    \begin{subfigure}[b]{0.44\textwidth}
        \centering
        \begin{tikzpicture}[scale=1]
\draw[very thick] (0,0) -- (8,0) -- (8,4) -- (0,4) --cycle;
\draw[very thick] (4,0) -- (4,4);
\draw[very thick] (0,0) -- (4,4);
\draw[very thick] (4,0) -- (8,4);

\draw[fill=black] (0,0) circle (.1mm);
\draw[fill=black] (0,4) circle (.1mm);
\draw[fill=black] (4,4) circle (.1mm);
\draw[fill=black] (4,0) circle (.1mm);
\draw[fill=black] (8,0) circle (.1mm);
\draw[fill=black] (8,4) circle (.1mm);

%(0,0)
\node[shift={({.5*cos(67.5)}, {.5*sin(67.5)})},scale=.75] at (0,0) {$z''_1$};
\node[shift={({.5*cos(22.5)}, {.5*sin(22.5)})},scale=.75] at (0,0) {$z_3$};
%(0,4)
\node[shift={({.3*cos(-45)}, {.5*sin(-45)})},scale=.75] at (0,4) {$z'_1$};
%(4,0)
\node[shift={({.5*cos(22.5)}, {.5*sin(22.5)})},scale=.75] at (4,0) {$z_4$};
\node[shift={({.5*cos(67.5)}, {.5*sin(67.5)})},scale=.75] at (4,0) {$z''_2$};
\node[shift={({.3*cos(135)}, {.3*sin(135)})},scale=.75] at (4,0) {$z'_3$};
%(4,4)
\node[shift={({.5*cos(-112.5)}, {.5*sin(-112.5)})},scale=.75] at (4,4) {$z''_3$};
\node[shift={({.5*cos(-157.5)}, {.5*sin(-157.5)})},scale=.75] at (4,4) {$z_1$};
\node[shift={({.3*cos(-45)}, {.3*sin(-45)})},scale=.75] at (4,4) {$z'_2$};
%(8,0)
\node[shift={({.3*cos(135)}, {.3*sin(135)})},scale=.75] at (8,0) {$z'_4$};
%(8,4)
\node[shift={({.5*cos(-112.5)}, {.5*sin(-112.5)})},scale=.75] at (8,4) {$z''_4$};
\node[shift={({.5*cos(-157.5)}, {.5*sin(-157.5)})},scale=.75] at (8,4) {$z_2$};

%draw in the meridian and longitude we're going to use
\draw[thick, red, ->] (6,-.5) .. controls (6.5,1) and (5.5,3) .. (6,4.5) coordinate[pos=.25] (m);
\node[left,red] at (m) {$\mathfrak{M}_1$};
\draw[thick, red, ->] (-.5,2.5) .. controls (2,3) and (6,2) .. (8.5,2.5) coordinate[pos=.25] (l);
\node[above,red] at (l) {$\mathfrak{L}_1$};

\end{tikzpicture}
        \caption{The triangulation of the ``small'' cusp.}
    \end{subfigure}
    \hfill 
    \begin{subfigure}[b]{0.44\textwidth}
        \centering
        \begin{tikzpicture}[scale=1]
% --- EDGES ---
\draw[very thick] (0,0) -- (8,0) -- (8,4) -- (0,4) --cycle;
\draw[very thick] (2,0) -- (2,4);
\draw[very thick] (4,0) -- (4,4);
\draw[very thick] (6,0) -- (6,4);
\draw[very thick] (0,4) -- (2,0);
\draw[very thick] (0,4) -- (2,2);
\draw[very thick] (2,4) -- (4,0);
\draw[very thick] (2,2) -- (4,0);
\draw[very thick] (4,0) -- (6,4);
\draw[very thick] (4,0) -- (6,2);
\draw[very thick] (6,2) -- (8,0);
\draw[very thick] (6,4) -- (8,0);

% --- VERTICES ---
\draw[fill=black] (0,0) circle (.1mm);
\draw[fill=black] (2,0) circle (.1mm);
\draw[fill=black] (4,0) circle (.1mm);
\draw[fill=black] (6,0) circle (.1mm);
\draw[fill=black] (8,0) circle (.1mm);
\draw[fill=black] (0,4) circle (.1mm);
\draw[fill=black] (2,4) circle (.1mm);
\draw[fill=black] (4,4) circle (.1mm);
\draw[fill=black] (6,4) circle (.1mm);
\draw[fill=black] (8,4) circle (.1mm);
\draw[fill=black] (2,2) circle (.1mm); % Added missing dot
\draw[fill=black] (6,2) circle (.1mm); % Added missing dot

% --- LABELS ---

% (0,0)
\node[shift={({.3*cos(45)}, {.3*sin(45)})},scale=.75] at (0,0) {$z'_2$};

% (2,0)
\node[shift={({.3*cos(45)}, {.3*sin(45)})},scale=.75] at (2,0) {$z_1$};
\node[shift={({.75*cos(105)}, {.75*sin(105)})},scale=.75] at (2,0) {$z'_3$};
\node[shift={({.5*cos(150)}, {.5*sin(150)})},scale=.75] at (2,0) {$z''_2$};

% (4,0)
\node[shift={({.5*cos(22.5)}, {.5*sin(22.5)})},scale=.75] at (4,0) {$z_1$};
\node[shift={({.75*cos(56)}, {.75*sin(56)})},scale=.75] at (4,0) {$z'_2$};
\node[shift={({.75*cos(79)}, {.75*sin(79)})},scale=.75] at (4,0) {$z'_4$};
\node[shift={({.75*cos(101)}, {.75*sin(101)})},scale=.75] at (4,0) {$z_3$};
\node[shift={({.75*cos(125)}, {.75*sin(125)})},scale=.75] at (4,0) {$z_2$};
\node[shift={({.5*cos(157.5)}, {.5*sin(157.5)})},scale=.75] at (4,0) {$z'_1$};

% (6,0)
\node[shift={({.3*cos(45)}, {.3*sin(45)})},scale=.75] at (6,0) {$z_3$};
\node[shift={({.3*cos(135)}, {.3*sin(135)})},scale=.75] at (6,0) {$z'_1$};

% (8,0)
\node[shift={({.75*cos(101)}, {.75*sin(101)})},scale=.75] at (8,0) {$z_1$};
\node[shift={({.75*cos(122.5)}, {.75*sin(122.5)})},scale=.75] at (8,0) {$z_4$};
\node[shift={({.5*cos(157.5)}, {.5*sin(157.5)})},scale=.75] at (8,0) {$z'_3$};

% (0,4)
\node[shift={({.5*cos(-20)}, {.5*sin(-20)})},scale=.75] at (0,4) {$z'_4$};
\node[shift={({.75*cos(-55)}, {.75*sin(-55)})},scale=.75] at (0,4) {$z_3$};
\node[shift={({.75*cos(-78.5)}, {.75*sin(-78.5)})},scale=.75] at (0,4) {$z_2$};

% (2,4)
\node[shift={({.5*cos(-30)}, {.5*sin(-30)})},scale=.75] at (2,4) {$z''_3$};
\node[shift={({.4*cos(-135)}, {.4*sin(-135)})},scale=.75] at (2,4) {$z_4$};
\node[shift={({.5*cos(-75)}, {.5*sin(-75)})},scale=.75] at (2,4) {$z'_2$};

% (4,4)
\node[shift={({.4*cos(-45)}, {.4*sin(-45)})},scale=.75] at (4,4) {$z_4$};
\node[shift={({.4*cos(-135)}, {.4*sin(-135)})},scale=.75] at (4,4) {$z'_3$};

% (6,4)
\node[shift={({.5*cos(-25)}, {.5*sin(-25)})},scale=.75] at (6,4) {$z''_1$};
\node[shift={({.75*cos(-75)}, {.75*sin(-75)})},scale=.75] at (6,4) {$z'_4$};
\node[shift={({.75*cos(-105)}, {.75*sin(-105)})},scale=.75] at (6,4) {$z_2$};
\node[shift={({.5*cos(-150)}, {.5*sin(-150)})},scale=.75] at (6,4) {$z''_4$};

% (8,4)
\node[shift={({.4*cos(-135)}, {.4*sin(-135)})},scale=.75] at (8,4) {$z'_1$};

% (2,2)
\node[shift={({.25*cos(0)}, {.25*sin(0)})},scale=.75] at (2,2) {$z''_2$};
\node[shift={({.5*cos(112.5)}, {.5*sin(112.5)})},scale=.75] at (2,2) {$z''_4$};
\node[shift={({.25*cos(180)}, {.25*sin(180)})},scale=.75] at (2,2) {$z''_3$};
\node[shift={({.5*cos(-67.5)}, {.5*sin(-67.5)})},scale=.75] at (2,2) {$z''_1$};

% (6,2)
\node[shift={({.3*cos(22.5)}, {.3*sin(22.5)})},scale=.75] at (6,2) {$z''_4$};
\node[shift={({.3*cos(157.5)}, {.3*sin(157.5)})},scale=.75] at (6,2) {$z''_2$};
\node[shift={({.5*cos(247.5)}, {.5*sin(247.5)})},scale=.75] at (6,2) {$z''_1$};
\node[shift={({.5*cos(-67.5)}, {.5*sin(-67.5)})},scale=.75] at (6,2) {$z''_3$};

%draw in the meridian and longitude we're going to use
\draw[thick, red, ->] (3,-.5) .. controls (3.5,1) and (2.5,3) .. (3,4.5) coordinate[pos=.7] (m);
\node[right,red] at (m) {$\mathfrak{M}_2$};
\draw[thick, red, ->] (-.5,1) .. controls (2,1.5) and (6,.5) .. (8.5,1) coordinate[pos=.15] (l);
\node[above,red] at (l) {$\mathfrak{L}_2$};

\end{tikzpicture}
        \caption{The triangulation of the ``large'' cusp.}
    \end{subfigure}   
    \caption{Triangulations of the boundary tori induced by $\mathcal{T}$.}
    \label{fig:cusp_triangulations}
\end{figure}

From the triangulations of the cusps, we can find the equations for the meridians and longitudes:
\begin{eqnarray}
\mathfrak{M}_{1}: &&Z_{4}-Z_{2}=0 ,\nonumber\\
\mathfrak{M}_{2}: &&Z''_{3} - Z'_{1} - Z_{2}=0,
\end{eqnarray}
and
\begin{eqnarray}\label{longitudes}
\mathfrak{L}_{1}: && Z''_{3} + Z''_{4} - Z''_{1} - Z''_{2}=0,\nonumber\\
\mathfrak{L}_{2}: && 2Z''_1 - Z'_3 - Z_3 - Z'_4 - Z'_2 + Z''_3 - Z_4 - Z_1 = 0.
 \end{eqnarray}

\subsection{Twist knots through Dehn filling the Whitehead link} \label{sec:dehnFillingParams}

It is a well-known result in low-dimensional topology that the complement of any twist knot can be obtained by performing a specific Dehn filling on the complement of the Whitehead link.
This brief section is dedicated to elucidating the specific parameters of this filling. 

Take $M$ to be the complement of the Whitehead link with the ideal triangulation from Figure~\ref{fig:Whitehead_tetrahedra} with boundary tori in Figure~\ref{fig:cusp_triangulations}.
Then, the complement of the twist knot $\mathcal{K}_p$ is the $(1,p)$ Dehn filling of $M$ along the boundary component corresponding to the crossing circle.
That is, $S^3 \setminus \mathcal{K}_p$ is the $3$-manifold obtained by gluing a solid torus into the Whitehead link complement so that the curve $\mathfrak{M}_1 + p \mathfrak{L}_1$ bounds a disk. 

To obtain the shape parameters that yield the complement of $\mathcal{K}_p$, one solves the usual gluing equations with the completeness equation for the crossing circle replaced by the equation 
\[Z_4 - Z_2 + p (Z''_3+Z''_4-Z''_1-Z''_2) = -2\pi i.\]

%%%%%%%%%%%%%%%%%%%%%%%%%%%%%%%%%%%%%%%%%%%%%%%%
\section{Colored Jones polynomials of twist knots with insertions} \label{sec:colored_jones_polynomials}

This section is dedicated to obtaining the colored Jones polynomial of $\mathcal{K}_p$ with the insertion of an arbitrary number of cables of the knot $K_b$ (see Figure~\ref{fig:twist_knot_curves}), and subsequently computing the asymptotic expansion of this expression as the color approaches infinity. 
Fortunately for us, much of the work has already been done. 
In particular, we will follow \cite{M} to obtain our expression for the colored Jones polynomial, and then \cite{CZ1,CZ2} for its asymptotics. 

Before proceeding, let's establish the standard quantum integer notation that will be used throughout this section. 
Set $a=A^2=q^{1/2}$.
Define the quantum integers,
\begin{equation}
    \{n\} = a^n - a^{-n}, \quad \text{and} \quad [n] = \frac{a^n - a^{-n}}{a - a^{-1}},
\end{equation}
and the corresponding quantum factorials,
\begin{equation}
    \{n\}! = \prod_{i=1}^n \{i\}, \quad \text{and} \quad [n]! = \prod_{i=1}^n [i].
\end{equation}
We also introduce two other sequences:
\begin{align}
    \mu_n &= (-1)^n A^{n^2+2n}, \\
    \lambda_n &= -a^{n+1} - a^{-n-1}.
\end{align}
Finally, let $h_k(x_1, \dots, x_n)$ denote the complete homogeneous symmetric polynomial of degree $k$ in the variables $x_1, \dots, x_n$. 
As an example,
\[h_3(x_1,x_2,x_3) = x_1^3+x_2^3+x_3^3+x_1^2x_2+x_1^2x_3+x_2^2x_1+x_2^2x_3+x_3^2x_1+x_3^2x_2+x_1x_2x_3.\]

\subsection{Computation of the colored Jones polynomial}
We begin by introducing a formal skein-theoretical twist operator $\omega$, which acts on an even number of strands by inducing a full positive twist: 
\begin{gather*}
    \vcenter{\hbox{
    \includegraphics[scale=.7]{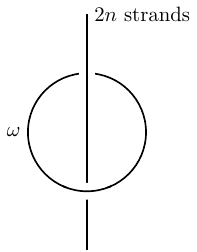}
    }}
    \;\;=\;\;
    \vcenter{\hbox{
    \includegraphics[scale=.7]{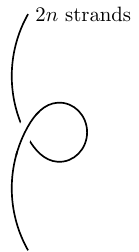}
    }}
\end{gather*}
Correspondingly, for any $p \in \mathbb{Z}$, the operator $\omega^p$ applies $p$ full twists (positive or negative, depending on the sign of $p$).
$\omega$ is not a standard, finite skein element, but rather a formal power series of skein elements.

Working within the skein algebra of the solid torus $\mathrm{Sk}(S^1 \times D^2) \cong \mathbb{Z}[A^{\pm 1}][z]$, we can define a convenient basis $\{R_n\}_{n=0}^\infty$:
\begin{equation}
R_n := \prod_{i=0}^{n-1} (z - \lambda_{2i}), \text{ with } R_0 := 1.
\end{equation}
The twist operator $\omega^p$ can be expanded as an infinite linear combination of these basis elements.

\begin{thm}[\cite{M}]
The twist operator can be expanded in the basis $R_n$ as $\omega^p = \sum_{n=0}^\infty c_{n,p} R_n$, where the coefficients are given by:
\begin{equation}
c_{n,p} = \frac{1}{(a - a^{-1})^{2n}} \sum_{k=0}^n (-1)^k \mu_{2k}^{p} \frac{[2k + 1]}{[n + k + 1]![n - k]!}.
\end{equation}
\end{thm}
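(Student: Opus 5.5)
The plan is to diagonalize $\omega$ in the Chebyshev basis and then change basis to $\{R_n\}$. In $\mathrm{Sk}(S^1\times D^2)$ the elements $e_j:=S_j(z)$ are eigenvectors of the full-twist (framing-change) operator. A full positive twist on the closure of the $j$-th Jones–Wenzl idempotent acts by the scalar $\mu_j=(-1)^jA^{j^2+2j}$. Since $\omega$ acts on an even number of strands, it should be read as the formal element $\omega=\sum_{k\ge0}\langle e_{2k}\rangle\, e_{2k}$, normalized as in \cite{M} so that encircling strands by it produces one full twist. Then $\omega^p$ has Chebyshev expansion
\[
\omega^p=\sum_{k\ge 0} (-1)^k\mu_{2k}^{p}\,[2k+1]\, e_{2k},
\]
up to the normalization conventions of \cite{M}. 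Here $(-1)^k[2k+1]$ is the quantum dimension $\langle e_{2k}\rangle$ (up to sign conventions), and $\mu_{2k}^p$ is the $p$-fold twist eigenvalue. I would first verify the case $p=\pm1$, where the statement is the standard Kirby-color/twist formula. The general $p$ then follows from multiplicativity on eigenvectors, because composing twists multiplies eigenvalues.

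Next, express each $e_{2k}$ in the basis $R_n$. The key fact is that $e_{2k}$ has roots exactly at $z=\lambda_j$ for the appropriate indices. Indeed, $S_m(-a^{j+1}-a^{-j-1})=(-1)^m[m+1]_{a^{j+1}}$-type expressions vanish when $(m+1)(j+1)$ makes the quantum integer vanish. More to the point, the $R_n$ are Newton interpolation polynomials with nodes $\lambda_0,\lambda_2,\lambda_4,\dots$. So I would compute the coefficient of $R_n$ in $e_{2k}$ by Newton divided differences:
\[
e_{2k}=\sum_{n} d_{n,k}R_n,\qquad d_{n,k}=e_{2k}[\lambda_0,\lambda_2,\dots,\lambda_{2n}].
\]
Using $S_m(-x-x^{-1})=(-1)^m\frac{x^{m+1}-x^{-m-1}}{x-x^{-1}}$ with $x=a^{2i+1}$, the values $e_{2k}(\lambda_{2i})=\frac{\{(2i+1)(2k+1)\}}{\{2i+1\}}$ are explicit. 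Alternatively one can use the known expansion $e_{2k}=\sum_n \frac{(\text{q-binomial})}{\cdots}R_n$. I expect the divided difference to collapse to
\[
d_{n,k}=\frac{[2k+1]^{-1}\,\{2k+1\}\cdots}{(a-a^{-1})^{2n}[n+k+1]![n-k]!}\times(\text{normalization}),
\]
vanishing for $n<k$. Summing $\sum_k (-1)^k\mu_{2k}^p[2k+1]\,d_{n,k}$ then gives the claimed $c_{n,p}$.

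The main obstacle is the closed-form evaluation of $d_{n,k}$, a $q$-hypergeometric identity. My first attempt would be the reverse direction: expand $R_n$ in the $e_{j}$, where the product $\prod_{i<n}(z-\lambda_{2i})$ behaves well under the Clebsch–Gordan rule $z e_j=e_{j+1}+e_{j-1}$, and induct on $n$. If that fails, I would instead pair against the dual functionals "evaluate at $\lambda_{2i}$" and use a $q$-Vandermonde/partial-fraction identity
\[
\sum_{i=0}^{n}\frac{f(\lambda_{2i})}{\prod_{j\ne i}(\lambda_{2i}-\lambda_{2j})}.
\]
In this formula, $\lambda_{2i}-\lambda_{2j}=-\{i-j\}\{i+j+1\}$ makes the denominators products of quantum factorials, which is exactly where $[n+k+1]![n-k]!$ arises. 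A final check is the convergence issue: $\omega^p$ is formal, but each $c_{n,p}$ is a finite sum, so everything is an identity of coefficientwise finite sums.
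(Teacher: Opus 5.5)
There is a genuine gap, and it is in your first step. The twist operator is characterized by its \emph{encircling} action, not by the framing-change map $t$ on $\mathrm{Sk}(S^1\times D^2)$. Encircling is an algebra homomorphism: an element $x\in\mathbb{Z}[A^{\pm1}][z]$ placed around strands carrying the Jones--Wenzl idempotent $f_{2k}$ acts by the scalar $x(\lambda_{2k})$. So $\omega^p$, the $p$-th power in the solid-torus algebra, is determined by $\omega^p(\lambda_{2k})=\mu_{2k}^p$ for all $k\ge0$.

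Your formula $\omega^p=\sum_k(-1)^k\mu_{2k}^p[2k+1]e_{2k}$ instead treats $\omega^p$ as $t^p(\omega)$. The $e_j$ are eigenvectors of $t$, but they are not orthogonal idempotents for the product (they multiply by the Clebsch--Gordan rule). So multiplicativity of twist eigenvalues says nothing about Chebyshev coefficients. A quick test is $p=0$: the stated formula gives $c_{0,0}=1$ and $c_{n,0}=0$ for $n\ge1$, i.e.\ $\omega^0=1$. Yours gives $\sum_k(-1)^k[2k+1]e_{2k}$, whose $R_0$-coefficient would be the divergent series $\sum_k(-1)^k[2k+1]^2$.

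The Kirby-color heuristic behind your formula only makes sense at roots of unity and up to a Gauss-sum scalar. It does not converge in the $R_n$-adic completion where $\sum_n c_{n,p}R_n$ lives. For $|p|\ge2$, a $\pm p$-framed Kirby-colored unknot encodes surgery producing a lens space, not $p$ full twists.

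Your second step also has the triangularity backwards. Since $e_{2k}$ has degree $2k$, its $R_n$-coefficients vanish for $n>2k$, not for $n<k$; for example $e_{2k}(\lambda_0)=[2k+1]\neq0$. The coefficient of $R_n$ would then be an infinite sum over $k\ge n/2$, never the finite sum $\sum_{k=0}^n$.

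The paper gives no proof of its own (it cites Masbaum). The standard argument, which your last paragraph nearly reaches, drops the Chebyshev basis altogether. Since $R_n(\lambda_{2k})=\prod_{i<n}(\lambda_{2k}-\lambda_{2i})$ vanishes for $k<n$ and $R_k(\lambda_{2k})=(-1)^k\{2k\}!\neq0$, the conditions $\sum_n c_{n,p}R_n(\lambda_{2k})=\mu_{2k}^p$ form a triangular system. Its unique solution is the Newton divided difference of the data $\mu_{2k}^p$ at the nodes $\lambda_0,\lambda_2,\dots$:
\[
c_{n,p}=\sum_{k=0}^{n}\frac{\mu_{2k}^p}{\prod_{0\le j\le n,\ j\neq k}(\lambda_{2k}-\lambda_{2j})}.
\]
Using $\lambda_{2k}-\lambda_{2j}=-\{k-j\}\{k+j+1\}$, the denominator equals $(-1)^k\{n-k\}!\,\{n+k+1\}!/\{2k+1\}$. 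Rewriting $\{m\}=(a-a^{-1})[m]$ then gives exactly the stated formula, including the factor $(a-a^{-1})^{-2n}$.

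You already wrote down both ingredients (the Lagrange form and the factorization of $\lambda_{2i}-\lambda_{2j}$). Apply them to the values $\mu_{2k}^p$ directly rather than to each $e_{2k}$. Note that $[2k+1]$ here is not a quantum dimension: it comes from the missing $j=k$ factor in $\prod_{j\neq k}\{k+j+1\}$, and the sign $(-1)^k$ comes from the $k$ factors with $j<k$.
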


Our primary goal in this subsection is to compute the normalized $n$-th colored Jones polynomial 
\[J'_{n,2}(\mathcal{K}_p \cup K_b^l;q) := \frac{J_{n,2}(\mathcal{K}_p \cup K_b^l;q)}{[n]},\]
for $0 \le l \le 2|p|$. 
The key observation is that the insertion of $l$ cables of $K_b$ into the complement of $\mathcal{K}_p$ corresponds precisely to multiplying the formal twist operator $\omega^p$ by $z^l$; see Figure~\ref{fig:mult_by_z}.

\begin{figure}[htbp]
    \centering
    \begin{tabular}{cccccc}
        \includegraphics[scale=.75,valign=c]{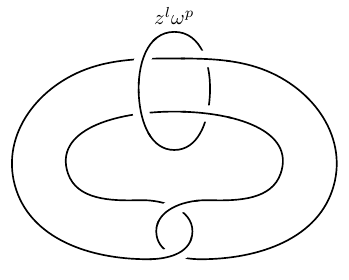}&
        $=$&
        \includegraphics[scale=.75,valign=c]{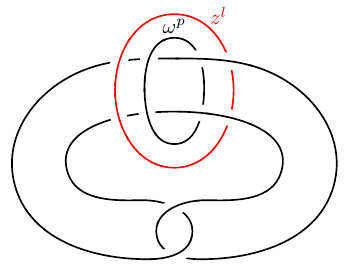}&
        $=$&
        \includegraphics[scale=.75,valign=c]{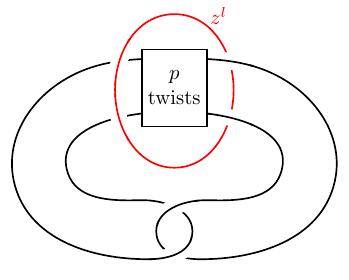}&       
    \end{tabular}
  \caption{Multiplying the operator $\omega^p$ by $z^l$ inserts $l$ cables of $K_b$ into the complement of $\mathcal{K}_p$.}
  \label{fig:mult_by_z}
\end{figure}

With this observation in hand, our task reduces to finding a formula for $z^l \omega^p$ as a linear combination of the basis elements $R_n$. 
Once we obtain this expansion, we can follow Masbaum's original argument exactly. 
We first compute the action of $z^l$ on the basis $R_n$, using a simple induction argument.

\begin{lem} \label{lem:zl_Rn}
    For any integer $l \ge 0$,
    \begin{equation}
        z^l R_n = \sum_{i=0}^l h_{l-i}(\lambda_{2n}, \lambda_{2n+2}, \dots, \lambda_{2n+2i}) R_{n+i}.
    \end{equation}
\end{lem}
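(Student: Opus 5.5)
We argue by induction on $l$, with $n$ fixed but arbitrary. The only input is the defining recursion of the basis $R_n$: since $R_{n+1} = R_n\,(z-\lambda_{2n})$ and the skein algebra of the solid torus is commutative (it is $\mathbb{Z}[A^{\pm1}][z]$), we have
\[
zR_m = R_{m+1} + \lambda_{2m} R_m \qquad \text{for all } m \ge 0.
\]
For $l=0$ the claimed formula reads $R_n = h_0(\lambda_{2n})R_n$, which holds because $h_0 = 1$. For $l=1$ it reads $zR_n = h_1(\lambda_{2n})R_n + h_0(\lambda_{2n},\lambda_{2n+2})R_{n+1} = \lambda_{2n}R_n + R_{n+1}$, which is exactly the recursion above.

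For the inductive step, assume the formula holds for $l$ and multiply both sides by $z$. Applying $zR_{n+i} = R_{n+i+1} + \lambda_{2n+2i}R_{n+i}$ to each term gives
\[
z^{l+1}R_n = \sum_{i=0}^{l} h_{l-i}(\lambda_{2n},\dots,\lambda_{2n+2i})\bigl(R_{n+i+1} + \lambda_{2n+2i}R_{n+i}\bigr).
\]
Shift the index in the first group of terms and collect the coefficient of $R_{n+j}$ for $0\le j\le l+1$. This coefficient is
\[
h_{l-j+1}(\lambda_{2n},\dots,\lambda_{2n+2j-2}) + \lambda_{2n+2j}\,h_{l-j}(\lambda_{2n},\dots,\lambda_{2n+2j}),
\]
with the conventions that the first term is absent when $j=0$ and the second is absent when $j=l+1$.

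To finish, we use the standard recursion for complete homogeneous symmetric polynomials, obtained by splitting monomials according to whether they contain the last variable:
\[
h_k(x_1,\dots,x_m) = h_k(x_1,\dots,x_{m-1}) + x_m\,h_{k-1}(x_1,\dots,x_m).
\]
Taking $k = l+1-j$ and $x_m = \lambda_{2n+2j}$ shows that the coefficient of $R_{n+j}$ equals $h_{l+1-j}(\lambda_{2n},\dots,\lambda_{2n+2j})$, as required. The boundary cases need a quick separate check. For $j=0$ the coefficient is $\lambda_{2n}h_l(\lambda_{2n}) = \lambda_{2n}^{l+1} = h_{l+1}(\lambda_{2n})$. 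For $j=l+1$ it is $h_0 = 1$. This completes the induction.

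The only step that needs care is this index bookkeeping: matching the shifted sum against the unshifted one and handling the two endpoint terms. Nothing deeper is required, since commutativity of $\mathrm{Sk}(S^1\times D^2)$ makes left versus right multiplication by $z$ irrelevant.
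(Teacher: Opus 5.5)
Your proof is correct and takes the route the paper indicates: induction on $l$, using $zR_m = R_{m+1} + \lambda_{2m}R_m$ together with the recursion $h_k(x_1,\dots,x_m) = h_k(x_1,\dots,x_{m-1}) + x_m h_{k-1}(x_1,\dots,x_m)$. The paper only says ``a simple induction argument'' and omits the details; your coefficient matching, including the endpoint cases $j=0$ and $j=l+1$, supplies them correctly.
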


It's useful to define a new set of coefficients:
\begin{equation}
    d_{i,p,l} := \sum_{j=0}^l c_{i-j, p} \, h_{l-j}(\lambda_{2i-2j}, \dots, \lambda_{2i}).
\end{equation}

The next lemma then follows immediately.
\begin{lem}
The operator $z^l \omega^p$ expands in the $R_n$ basis as:
\begin{equation}
z^l \omega^p = \sum_{i=0}^\infty d_{i,p,l} R_i.
\end{equation}
\end{lem}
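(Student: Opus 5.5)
The plan is to substitute Masbaum's expansion $\omega^p=\sum_{m\ge0}c_{m,p}R_m$ and apply Lemma~\ref{lem:zl_Rn} termwise. Then we regroup the resulting double sum according to the index of the basis element $R_i$.

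Concretely, multiplication by $z^l$ is $\mathbb{Z}[A^{\pm1}]$-linear on $\mathrm{Sk}(S^1\times D^2)\cong\mathbb{Z}[A^{\pm1}][z]$. We extend it termwise to the formal series defining $\omega^p$, which is how $\omega^p$ is treated throughout. This gives
\[
z^l\omega^p=\sum_{m=0}^\infty c_{m,p}\, z^lR_m=\sum_{m=0}^\infty\sum_{s=0}^{l} c_{m,p}\,h_{l-s}(\lambda_{2m},\lambda_{2m+2},\dots,\lambda_{2m+2s})\,R_{m+s}.
\]
Next we set $i=m+s$ and write $j=s$. The coefficient of $R_i$ then collects the pairs $(m,s)=(i-j,j)$ with $0\le j\le l$, subject to $m=i-j\ge0$. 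The arguments of $h_{l-j}$ become $\lambda_{2(i-j)},\dots,\lambda_{2(i-j)+2j}=\lambda_{2i}$, which is exactly the list appearing in the definition of $d_{i,p,l}$. So the coefficient of $R_i$ is $\sum_{j}c_{i-j,p}\,h_{l-j}(\lambda_{2i-2j},\dots,\lambda_{2i})=d_{i,p,l}$.

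Two bookkeeping points need checking.

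First, the regrouping of the formal series is legitimate. For each fixed $i$ only finitely many pairs $(m,s)$ contribute, namely at most $l+1$. So the rearrangement is a finite reindexing in each degree, and no convergence issue arises beyond the one already implicit in Masbaum's expansion.

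Second, the definition of $d_{i,p,l}$ sums $j$ from $0$ to $l$, even when $i<l$. For those terms $c_{i-j,p}$ has a negative index. We adopt, and state explicitly, the convention that $c_{n,p}=0$ for $n<0$. This matches the empty range of $m$, so the identity holds for all $i\ge0$.

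No step is a genuine obstacle; the only care required is this index shift.
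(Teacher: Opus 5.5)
Your proposal is correct and is essentially the paper's argument. The paper says the lemma "follows immediately": one substitutes Masbaum's expansion of $\omega^p$, applies Lemma~\ref{lem:zl_Rn} termwise, and reindexes with $i=m+s$. Your explicit convention $c_{n,p}=0$ for $n<0$, and your remark that each $R_i$ receives only finitely many contributions, make precise two points the paper leaves implicit.
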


From here, it is straightforward to obtain the desired colored Jones polynomial. 
We omit the proof here, as it follows Masbaum's derivation for the standard twist knot virtually line by line, simply replacing the coefficients $c_{n,p}$ with $d_{n,p,l}$.

\begin{thm} \label{thm:jones_insertion}
The normalized colored Jones polynomial of $\mathcal{K}_p$ with $l$ insertions of $K_b$ is given by:
\begin{equation}
J'_{n,2}(\mathcal{K}_p \cup K_b^l) = \sum_{i=0}^{n-1} d_{i,p,l} a^{\frac{i(i+3)}{2}} \frac{\{n + i\} \dots \{n - i\}}{\{n\}}\{i\}!.
\end{equation}
\end{thm}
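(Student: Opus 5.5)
The plan is to follow Masbaum's skein-theoretic derivation of the colored Jones polynomial of twist knots, with $\omega^p$ replaced by $z^l\omega^p$. By the preceding lemma, the twist region decorated with $l$ parallel copies of $K_b$ is encoded by the element $z^l\omega^p=\sum_i d_{i,p,l}R_i$ of $\mathrm{Sk}(S^1\times D^2)$. This element is placed on the unknotted circle encircling the two parallel strands of the clasp diagram (the crossing circle of the Whitehead link). The knot $\mathcal{K}_p$ itself is then the closure of those strands through the clasp, colored by $S_{n-1}$, i.e.\ carrying the Jones--Wenzl idempotent $f_{n-1}$. Concretely, I would view $\mathcal{K}_p\cup K_b^l$ as the Whitehead link with one component colored $n-1$ and the crossing-circle component decorated by $z^l\omega^p$. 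Since the Kauffman bracket is linear in this decoration, we get
\[
J_{n,2}(\mathcal{K}_p\cup K_b^l)=\sum_i d_{i,p,l}\,\bigl\langle W(S_{n-1},R_i)\bigr\rangle,
\]
where $W(\cdot,\cdot)$ denotes the Whitehead link diagram decorated by the given skein elements. The sum is formally infinite, but it will truncate.

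The key input, taken verbatim from Masbaum, is the evaluation of $\langle W(S_{n-1},R_i)\rangle$. Its justification is that $R_i=\prod_{j=0}^{i-1}(z-\lambda_{2j})$, and that $z-\lambda_{2j}$ acting on a strand colored $2j$ kills the $f_{2j}$ component. The two parallel strands colored $n-1$ fuse into $\bigoplus_{k} f_{2k}$ for $0\le k\le n-1$. A circle colored by $z$ encircling $f_{2k}$ acts by the scalar $\lambda_{2k}$, so $R_i$ acts on the $f_{2k}$ channel by the scalar $\prod_{j<i}(\lambda_{2k}-\lambda_{2j})$. This vanishes for $k<i$ and for $i>n-1$, which explains the truncation at $i=n-1$.

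The remaining clasp and closure evaluation is then a sum over $k$ of fusion coefficients, theta and tetrahedron evaluations, and twist factors $\mu$ coming from the clasp. Masbaum's computation collapses this sum, via a $q$-binomial identity, to
\[
[n]\,a^{i(i+3)/2}\frac{\{n+i\}\cdots\{n-i\}}{\{n\}}\{i\}!.
\]
This quantity depends only on $i$ and $n$ and not on the coefficients $c$, which is exactly why replacing $c_{i,p}$ by $d_{i,p,l}$ is harmless. Dividing by $[n]$ gives the stated formula.

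The main obstacle I expect is justifying that this per-basis-element evaluation is genuinely independent of the decoration. This requires checking that the $l$ cables of $K_b$ can be isotoped into the same solid torus neighborhood of the crossing circle that carries $\omega^p$, as asserted in Figure~\ref{fig:mult_by_z}, with blackboard framings matching so that no extra framing factors $(-A^3)^{\pm}$ appear. It also requires confirming that the formal infinite sum behaves well. For the latter, I would note that each $d_{i,p,l}$ is a finite combination of $c_{i-j,p}$, and that for fixed $n$ only $i\le n-1$ contribute. Hence only finitely many terms are involved and the formal series $\omega^p$ causes no convergence issue.
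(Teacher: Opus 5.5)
Your proposal is correct and follows the same route as the paper. The paper omits the proof, saying only that it follows Masbaum's derivation with $c_{i,p}$ replaced by $d_{i,p,l}$; you expand $z^l\omega^p=\sum_i d_{i,p,l}R_i$, use linearity of the bracket in the crossing-circle decoration, and reuse Masbaum's per-$R_i$ evaluation unchanged. Your remarks on truncation (since $R_i$ kills every fusion channel $f_{2k}$ with $k<i$) and on that evaluation not depending on the coefficients are exactly what justifies the paper's ``line by line'' claim.
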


\begin{eg}
Let us use Theorem \ref{thm:jones_insertion} to compute the colored Jones polynomial with insertions for the figure-eight knot, which corresponds to the twist knot $\mathcal{K}_{-1}$ (i.e., $p = -1$).
Define
\begin{equation}
w_{n,i} := \frac{\{n + i\} \dots \{n - i\}}{\{n\}}.
\end{equation}
When $p = -1$, the usual coefficients simplify to:
\begin{equation}
c_{i,-1} = \frac{a^{-\frac{i(i+3)}{2}}}{\{i\}!}.
\end{equation}
We can now compute the modified coefficients $d_{i,-1,l}$ for small numbers of insertions.
For a single insertion ($l = 1$):
\begin{equation}
        d_{i,-1,1} = c_{i,-1}\lambda_{2i} + c_{i-1,-1} = \frac{a^{\frac{-i(i+3)}{2}}}{\{i\}!}\lambda_{2i} + \frac{a^{\frac{-(i-1)(i+2)}{2}}}{\{i-1\}!}.
\end{equation}
Substituting this into the formula from Theorem \ref{thm:jones_insertion}:
\begin{align}
    J'_{n,2}(4_1 \cup K_b) &= \sum_{i=0}^{n-1} \left( \frac{a^{\frac{-i(i+3)}{2}}}{\{i\}!}\lambda_{2i} + \frac{a^{\frac{-(i-1)(i+2)}{2}}}{\{i-1\}!} \right) a^{\frac{i(i+3)}{2}} \{i\}! \, w_{n,i} \nonumber \\
    &= \sum_{i=0}^{n-1} \left( \lambda_{2i} + \{i\}a^{i+1} \right) w_{n,i} \nonumber \\
    &= -\sum_{i=0}^{n-1} \left( a^{-2i-1} + a \right) w_{n,i}.
\end{align}
Similarly, for two insertions ($l=2$), the modified coefficient becomes:
\begin{equation}
    d_{i,-1,2} = \frac{a^{\frac{-i(i+3)}{2}}}{\{i\}!}\lambda_{2i}^2 + \frac{a^{\frac{-(i-1)(i+2)}{2}}}{\{i-1\}!}(\lambda_{2i} + \lambda_{2i-2}) + \frac{a^{\frac{-(i-2)(i+1)}{2}}}{\{i-2\}!}.
\end{equation}
Substituting this into the main formula yields:
\begin{align}
    J'_{n,2}(4_1 \cup K_b^2) &= \sum_{i=0}^{n-1} \left( \lambda_{2i}^2 + \{i\}(\lambda_{2i} + \lambda_{2i-2})a^{i+1} + \{i\}\{i-1\}a^{2i+1} \right) w_{n,i} \nonumber \\
    &= \sum_{i=0}^{n-1} \left( 1 + a^{-4i-2} + a^{-2i} + a^{-2i+2} \right) w_{n,i}.
\end{align} 

After adjusting for minor differences in framing and variable conventions, these expressions match the conjectural formulas for the colored Jones polynomials with insertions proposed in \cite{AGLR}.
\end{eg}

\subsection{Asymptotics}

This subsection is dedicated to computing the asymptotic expansion of the colored Jones polynomial of the twist knot $\mathcal{K}_p$ with the insertion of $K_b^l$. 

We begin with the explicit formula for the colored Jones polynomial with an insertion, modified with the replacement $q=a^2$ and with the square bracket quantum integers replaced by their curly cousins. 
Then,
\begin{align}
    J_{n,2}'(\mathcal{K}_p \cup K_b^l;q) = \sum_{i=0}^{n-1} \sum_{j=0}^{l} \sum_{k=0}^{i-j} &\ h_{l-j}(\lambda_{2i}, \dots, \lambda_{2i-2j}) \nonumber \\
    &\times \frac{(-1)^k \{2k+1\} \{i\}! q^{\frac{i(i+3)}{4} + p k(k+1)}}{\{i-j+k+1\}! \{i-j-k\}!} \prod_{m=1}^{i} \{n+m\}\{n-m\}.
\end{align}
Multiply both the numerator and the denominator by $\{i+k+1\}!\{i-k\}!$ and extend the upper bound of the $k$-summation from $i-j$ to $i$ (the newly added terms where $k > i-j$ evaluate to zero), obtaining
\begin{align}
    J_{n,2}'(\mathcal{K}_p \cup K_b^l;q) = \sum_{i=0}^{n-1} \sum_{j=0}^{l} \sum_{k=0}^{i} &\ h_{l-j}(\lambda_{2i}, \dots, \lambda_{2i-2j}) \prod_{m=0}^{j-1}\left(\{i+k+1-m\}\{i-k-m\}\right) \nonumber \\
    &\times \frac{(-1)^k \{2k+1\} \{i\}! q^{\frac{i(i+3)}{4} + p k(k+1)}}{\{i+k+1\}! \{i-k\}!} \prod_{m=1}^{i} \{n+m\}\{n-m\}.
\end{align}
It is straightforward to prove:
\begin{lem}
\[\sum_{j=0}^{l}h_{l-j}(\lambda_{2i},\ldots,\lambda_{2i-2j})\prod_{m=0}^{j-1}\left(\{i+k+1-m\}\{i-k-m\}\right) = \lambda_{2k}^l\]
\end{lem}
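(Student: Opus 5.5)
The plan is to reduce the identity to a purely polynomial statement. The reduction is to recognize each factor of the product as a difference of two $\lambda$'s. Expanding with $a=A^2$,
\[
\{i+k+1-m\}\{i-k-m\} = a^{2i-2m+1} + a^{-(2i-2m+1)} - a^{2k+1} - a^{-(2k+1)} = \lambda_{2k} - \lambda_{2i-2m},
\]
using $\lambda_n = -a^{n+1}-a^{-n-1}$. We then set $x := \lambda_{2k}$ and $\mu_m := \lambda_{2i-2m}$ for $m \ge 0$, so that the node list $(\lambda_{2i},\ldots,\lambda_{2i-2j})$ is exactly $(\mu_0,\ldots,\mu_j)$. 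In these variables the claimed identity becomes
\[
\sum_{j=0}^{l} h_{l-j}(\mu_0,\ldots,\mu_j)\prod_{m=0}^{j-1}(x-\mu_m) = x^l .
\]
This is the Newton-interpolation expansion of $x^l$ in the basis of products of linear factors. It is the same statement as Lemma~\ref{lem:zl_Rn} with $n=0$ ($R_0=1$), except that the nodes are an arbitrary sequence $\mu_0,\mu_1,\ldots$ rather than $\lambda_0,\lambda_2,\ldots$.

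We therefore prove the polynomial identity in $\mathbb{Z}[\mu_0,\mu_1,\ldots][x]$ for arbitrary nodes, by induction on $l$, mirroring the proof of Lemma~\ref{lem:zl_Rn}. Write $P_j := \prod_{m<j}(x-\mu_m)$. The case $l=0$ is trivial. For the inductive step, multiply the identity for $l$ by $x$ and use
\[
x P_j = P_{j+1} + \mu_j P_j .
\]
Regrouping by $P_j$ gives the coefficient $h_{l-j+1}(\mu_0,\ldots,\mu_{j-1}) + \mu_j\, h_{l-j}(\mu_0,\ldots,\mu_j)$, where the first term comes from index $j-1$ and the second from index $j$. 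This equals $h_{l+1-j}(\mu_0,\ldots,\mu_j)$ by the standard recursion for complete homogeneous symmetric polynomials, obtained by splitting according to whether the variable $\mu_j$ appears. Finally, specializing $x=\lambda_{2k}$ and $\mu_m=\lambda_{2i-2m}$ yields the lemma.

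The main point to check is that no boundary issues arise. When $j>i-k$ some factor $\{i-k-m\}$ vanishes, and the sequence $\lambda_{2i-2m}$ may have negative indices when $j>i$. Neither matters: the identity is a formal algebraic identity valid for any specialization of the nodes, with $\lambda_n$ defined for all integers $n$ by the same formula. So no case split on $i,j,k$ is needed. The only genuinely computational step is the factorization $\{i+k+1-m\}\{i-k-m\} = \lambda_{2k}-\lambda_{2i-2m}$, which is a one-line expansion.
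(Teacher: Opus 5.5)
Your proof is correct: the factorization $\{i+k+1-m\}\{i-k-m\}=\lambda_{2k}-\lambda_{2i-2m}$ turns the lemma into the Newton-interpolation expansion of $x^l$ with arbitrary nodes. Your induction, via $xP_j=P_{j+1}+\mu_jP_j$ and the recursion for $h$, is exactly the induction behind Lemma~\ref{lem:zl_Rn}; the paper only calls this lemma ``straightforward'' without writing a proof, and your argument is evidently the intended one.
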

Thus, we obtain 
\begin{equation}
    J_{n,2}'(\mathcal{K}_p \cup K_b^l;q) = \sum_{i=0}^{n-1} \sum_{k=0}^{i} \frac{(-1)^k \lambda_{2k}^l \{2k+1\} \{i\}! q^{\frac{i(i+3)}{4} + p k(k+1)}}{\{i+k+1\}! \{i-k\}!} \prod_{m=1}^{i} \{n+m\}\{n-m\}.
\end{equation}

Crucially, except for the addition of the insertion factor $\lambda_{2k}^l$, this expression is structurally identical to the colored Jones polynomial of the twist knot $\mathcal{K}_p$. Thus, we can directly follow the asymptotic analysis established by Chen and Zhu in \cite{CZ1,CZ2}.

We evaluate the polynomial at the root of unity $q = e^{\frac{2\pi i}{n+\frac 1 m}}$ and take the limit as $n \to \infty$. 
Following their methodology, we rewrite the $q$-factorials in terms of the quantum dilogarithm function $\varphi_n(x)$ and apply the Poisson summation formula to convert the discrete sums into a double integral over the continuous variables $t = \frac{i+1/2}{n+\frac 1 m}$ and $s = \frac{k+1/2}{n + \frac 1 m}$. 
Under this substitution, the new insertion term can be written as:
\begin{equation}
    \lambda_{2k}^l \rightarrow \left(-e^{\pi i (2s)} - e^{-\pi i (2s)} \right)^l = \left(-2\cos(2\pi s)\right)^l.
\end{equation}

By taking $m \rightarrow \infty$ and mimicking the argument of \cite{CZ2}, we obtain:
\begin{thm}\label{thmCZ}
For a twist knot $\mathcal{K}_p$ with $l$ insertions of $K_b$, the colored Jones polynomial evaluated at the root of unity $q = e^{\frac{2\pi i}{n}}$ admits the following asymptotic integral representation as $n \to \infty$:
\begin{equation}\label{integralCZ}
    J'_{n,2} (\mathcal{K}_p \cup K_b^l;q) \sim \frac{2(-1)^p e^{\frac{\pi i}{4}} n^{\frac{3}{2}}} {\sin\left(\frac{\pi}{2n}\right)} \iint (-2 \cos(2\pi s))^l \sin(2\pi s) e^{n V_n (p,t,s)} dt \, ds,
\end{equation}
where the accompanying potential function $V_n(p, t, s)$ is given by
\begin{equation}\label{VnCZ}
\begin{aligned}
    V_n(p,t,s) &:= \pi i\left((2p+1)s^2 - (2p+3)s + \left(\frac{2}{n}-2\right)t - \frac{3p+2}{6n^2}\right) \\
    &\qquad + \frac{1}{n}\varphi\left(t+s+\frac{1}{2n}-1\right) + \frac{1}{n}\varphi\left(t-s+\frac{1}{2n}\right) \\
    &\qquad - \frac{3}{n}\varphi(t) - \frac{\pi i}{12}.
\end{aligned}
\end{equation}
$V_n(p, t, s)$, as $n \rightarrow \infty$, can be expanded as
\begin{equation}\label{VNtruncated}
\begin{aligned}
    V_n(p, t, s) &\sim \pi i \big((2p + 1)s^2 - (2p + 3)s - 2t\big) \\
    &\quad + \frac{1}{2\pi i} \left( \mathrm{Li}_2(e^{2\pi i(t+s)}) + \mathrm{Li}_2(e^{2\pi i(t-s)}) - 3\mathrm{Li}_2(e^{2\pi i t}) + \frac{\pi^2}{6} \right) \\
    &\quad - \frac{1}{2n} \left(\log( 1 - e^{2\pi i(t+s)}) + \log(1 - e^{2\pi i(t-s)}) - 4 \pi i t\right) + \mathcal{O}\left(\frac{1}{n^2}\right).
\end{aligned}
\end{equation}
\end{thm}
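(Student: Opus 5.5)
The plan is to run the Chen--Zhu argument of \cite{CZ1,CZ2} with the single extra factor $\lambda_{2k}^l$ carried along.

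\textbf{Starting formula and $q$-factorials.} Begin from the collapsed formula
\[
J'_{n,2}(\mathcal{K}_p\cup K_b^l;q)=\sum_{i,k}\frac{(-1)^k\lambda_{2k}^l\{2k+1\}\{i\}!\,q^{\frac{i(i+3)}{4}+pk(k+1)}}{\{i+k+1\}!\,\{i-k\}!}\prod_{m=1}^i\{n+m\}\{n-m\},
\]
evaluated at $q=e^{2\pi i/(n+\frac1m)}$. Write each $q$-factorial and the product $\prod\{n\pm m\}$ as ratios of Faddeev-type dilogarithms $\varphi_n$ evaluated at $t=\frac{i+1/2}{n+1/m}$ and $s=\frac{k+1/2}{n+1/m}$, exactly as in \cite{CZ1}. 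This turns the summand into
\[
(\text{prefactor})\cdot \lambda_{2k}^l\{2k+1\}\, e^{nV_n(p,t,s)},
\]
where $V_n$ is the potential \eqref{VnCZ}. The key points are:
\begin{itemize}
\item $V_n$ is unchanged by the insertion, because $\lambda_{2k}^l$ is a bounded function of $s$ alone and contributes nothing at exponential order.
\item $\{2k+1\}=2i\sin(2\pi s)$ up to the conventions absorbed in the prefactor.
\item $\lambda_{2k}=-q^{(2k+1)/2}-q^{-(2k+1)/2}$, which equals $-2\cos(2\pi s)$ exactly at the shifted root of unity; taking $m\to\infty$ changes this only by $O(1/n)$.
\end{itemize}

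\textbf{Poisson summation.} Extend the sum to a smooth bump-function cutoff on the region $D$ of \cite{CZ1} and apply Poisson summation. Since the insertion factor is entire and bounded on a neighborhood of $D$, the estimates of Chen--Zhu apply verbatim to the new amplitude $g(s)=(-2\cos 2\pi s)^l\sin(2\pi s)$. Concretely:
\begin{itemize}
\item Their bounds showing that all Fourier modes $(m_1,m_2)\neq(0,0)$ are exponentially smaller than the volume growth depend only on the real part of the potential and on a polynomial bound for the amplitude, so they carry over unchanged.
\item The boundary contributions are likewise dominated.
\end{itemize}
This leaves the $(0,0)$ term, which is the double integral in \eqref{integralCZ} with the prefactor $2(-1)^pe^{\pi i/4}n^{3/2}/\sin(\pi/2n)$. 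That prefactor comes from the Jacobian and the normalization of $\varphi_n$, identical to \cite{CZ2}.

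\textbf{Expansion of $V_n$.} To get \eqref{VNtruncated}, use the known expansion
\[
\varphi_n(x)=\frac{n}{2\pi i}\mathrm{Li}_2(e^{2\pi i x})+O(1/n)
\]
of the dilogarithm as in \cite{CZ2}. Taylor-expand the half-integer shifts $\frac{1}{2n}$ inside $\varphi$; these produce the $-\frac{1}{2n}\log(1-e^{2\pi i(\cdot)})$ terms. Then collect the linear-in-$t$ pieces $\frac{2\pi i}{n}t$ from the quadratic part and from $\varphi(t+s+\frac1{2n}-1)$ (periodicity in the shift by $-1$). Matching constants gives $\frac{\pi^2}{6}$ from $\mathrm{Li}_2(1)$ and $-\frac{\pi i}{12}$.

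\textbf{Main obstacle.} The main obstacle is checking that the insertion does not spoil the Chen--Zhu non-dominance estimates. The danger is that $(-2\cos 2\pi s)^l$ could vanish at the saddle, or grow along the deformed contour, for large $l\le 2|p|$. I would handle this by noting the estimates are stated for amplitudes bounded by $C^{l}$ on a compact neighborhood of $D$, which holds here. The integral representation does not require the amplitude to be nonzero at the saddle; nonvanishing only matters later, in Section~\ref{sec:proof}.
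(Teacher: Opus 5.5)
Your proposal is correct and follows the paper's argument. You start from the collapsed double sum and note it differs from the Chen--Zhu twist-knot formula only by $\lambda_{2k}^l$, which at $q=e^{2\pi i/(n+1/m)}$ is exactly $(-2\cos 2\pi s)^l$; the $\varphi_n$ rewriting, Poisson summation and $m\to\infty$ limit of \cite{CZ1,CZ2} then go through unchanged, and your remark that a bounded entire amplitude does not affect the non-dominance estimates is a justification the paper leaves implicit. One small slip in your expansion of $V_n$: the shift by $-1$ in $\varphi(t+s+\frac{1}{2n}-1)$ contributes nothing because $\mathrm{Li}_2(e^{2\pi i x})$ is $1$-periodic in $x$, so the $\frac{2\pi i t}{n}$ term comes solely from $\pi i\cdot\frac{2}{n}t$, and the $\frac{\pi^2}{6}$ in the bracket is just the constant $-\frac{\pi i}{12}$ rewritten as $\frac{1}{2\pi i}\cdot\frac{\pi^2}{6}$, not a value of $\mathrm{Li}_2(1)$.
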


%%%%%%%%%%%%%%%%%%%%%%%%%%%%%%%%%%%%%%%%%%%%%%%%
\section{The state integral for twist knots} \label{sec:state_integrals}

In this section we will derive an explicit formula for the state integral for twist knots, given an ideal triangulation $\mathcal{T}$, with insertions of $\Tr_{\mathcal{T}}(K_b^l)$.

\subsection{The quantum trace of $K_b$ and its cables}
\begin{figure}[htbp]
    \centering
    \begin{gather*}
    \vcenter{\hbox{
    \tdplotsetmaincoords{45}{60}
    \begin{tikzpicture}[tdplot_main_coords]  
    \begin{scope}[tdplot_main_coords]
    \coordinate (N) at (0, 0, 1);
    \coordinate (S) at (0, 0, -1);  
    \coordinate (b) at ({2*sqrt(2)*cos(0)}, {2*sqrt(2)*sin(0)}, 0);
    \coordinate (c) at ({2*sqrt(2)*cos(120)}, {2*sqrt(2)*sin(120)}, 0);
    \coordinate (d) at ({2*sqrt(2)*cos(240)}, {2*sqrt(2)*sin(240)}, 0);   
    \coordinate (bc) at ($1/2*(b) + 1/2*(c)$);
    \coordinate (bd) at ($1/2*(b) + 1/2*(d)$);
    \coordinate (cd) at ($1/2*(c) + 1/2*(d)$);
    \coordinate (kb1) at ($(bc)!.6!(N)$);
    \coordinate (kb4) at ($(bd)!.6!(S)$);   
    \begin{scope}[thick,decoration={
    markings,
    mark=at position 0.5 with {\arrow{>}}}
    ] 
    %kb and the marking it is "above"
    \draw[postaction={decorate}, dashed, orange] (bc) -- (S);
    \draw[blue] (kb1) .. controls ({1/2*cos(0)},{1/2*sin(0)},0) .. (kb4);
    %markings
    \draw[postaction={decorate}, orange] (bd) -- (N);
    \draw[postaction={decorate}, orange] (bc) -- (N);
    \draw[postaction={decorate}, orange] (cd) -- (N);
    \draw[postaction={decorate}, dashed, orange] (bd) -- (S);
    \draw[postaction={decorate}, dashed, orange] (cd) -- (S);
    \end{scope}
    %edges
    \begin{scope}[decoration={
    markings,
    mark=between positions 0.48 and .52 step .04 with {\draw(0,-3pt)--(0,3pt);}},very thick]
    \draw[postaction={decorate}] (d)--(b);
    \end{scope}
    \begin{scope}[decoration={
    markings,
    mark=between positions 0.46 and .54 step .04 with {\draw(0,-3pt)--(0,3pt);}},very thick]
    \draw[postaction={decorate}] (b)--(c);
    \end{scope}
    \begin{scope}[decoration={
    markings,
    mark=at position 0.6 with \arrow{stealth}},very thick]
    \draw[postaction={decorate}] (c)--(d);
    \end{scope}    
    \filldraw[orange] (N) circle (0.05em);
    \filldraw[orange] (S) circle (0.05em);
    \filldraw (b) circle (0.05em);
    \filldraw (c) circle (0.05em);
    \filldraw (d) circle (0.05em);
    %\node[anchor=north east] at (bd) {$a$};
    \node[below] at (bd) {$z''_3$};
    \node[above left] at (bc) {$z''_1$};
    %\node[anchor=south] at (c) {$\alpha$};
    %\node[anchor=east] at (d) {$\beta$};
    %\node[anchor=north] at (b) {$\gamma$};
    \node[above,scale=.75] at (kb1) {$\epsilon_1$};
    \node[above left,scale=.75] at (kb4) {$\epsilon_2$};    
    \end{scope}
    \end{tikzpicture}
    }}
    \hspace{1.5cm}
    \vcenter{\hbox{
    \tdplotsetmaincoords{45}{60}
    \begin{tikzpicture}[tdplot_main_coords]  
    \begin{scope}[tdplot_main_coords]
    \coordinate (N) at (0, 0, 1);
    \coordinate (S) at (0, 0, -1);  
    \coordinate (b) at ({2*sqrt(2)*cos(0)}, {2*sqrt(2)*sin(0)}, 0);
    \coordinate (c) at ({2*sqrt(2)*cos(120)}, {2*sqrt(2)*sin(120)}, 0);
    \coordinate (d) at ({2*sqrt(2)*cos(240)}, {2*sqrt(2)*sin(240)}, 0);   
    \coordinate (bc) at ($1/2*(b) + 1/2*(c)$);
    \coordinate (bd) at ($1/2*(b) + 1/2*(d)$);
    \coordinate (cd) at ($1/2*(c) + 1/2*(d)$);
    \coordinate (kb1) at ($(bd)!.6!(N)$);
    \coordinate (kb2) at ($(bc)!.6!(N)$);
    \coordinate (kb3) at ($(bc)!.4!(N)$);
    \coordinate (kb4) at ($(bc)!.4!(S)$);   
    \begin{scope}[thick,decoration={
    markings,
    mark=at position 0.5 with {\arrow{>}}}
    ] 
    %kb and the marking it is "above"
    \draw[postaction={decorate}, dashed, orange] (bd) -- (S);
    \draw[blue] (kb1) .. controls ({1/2*cos(0)},{1/2*sin(0)},0) .. (kb4);
    %markings
    \draw[postaction={decorate}, orange] (bd) -- (N);
    \draw[postaction={decorate}, orange] (bc) -- (N);
    \draw[postaction={decorate}, orange] (cd) -- (N);
    \draw[postaction={decorate}, dashed, orange] (bc) -- (S);
    \draw[postaction={decorate}, dashed, orange] (cd) -- (S);
    \end{scope}
    %edges
    \begin{scope}[decoration={
    markings,
    mark=between positions 0.48 and .52 step .04 with {\draw(0,-3pt)--(0,3pt);}},very thick]
    \draw[postaction={decorate}] (b)--(c);
    \end{scope}
    \begin{scope}[decoration={
    markings,
    mark=between positions 0.48 and .52 step .04 with {\draw(0,-3pt)--(0,3pt);}},very thick]
    \draw[postaction={decorate}] (d)--(b);
    \end{scope}
    \begin{scope}[decoration={
    markings,
    mark=at position 0.6 with \arrow{stealth}},very thick]
    \draw[postaction={decorate}] (c)--(d);
    \end{scope}    
    \filldraw[orange] (N) circle (0.05em);
    \filldraw[orange] (S) circle (0.05em);
    \filldraw (b) circle (0.05em);
    \filldraw (c) circle (0.05em);
    \filldraw (d) circle (0.05em);
    %\node[anchor=north east] at (bd) {$a$};
    \node[above] at (bd) {$z''_3$};
    \node[below right] at (bc) {$z''_2$};
    %\node[anchor=south] at (c) {$\alpha$};
    %\node[anchor=east] at (d) {$\beta$};
    %\node[anchor=north] at (b) {$\gamma$};
    \node[right,scale=.75] at (kb1) {$\epsilon_2$};
    \node[below right,scale=.75] at (kb4) {$\epsilon_3$};    
    \end{scope}
    \end{tikzpicture}
    }}\\ 
    \vspace{1cm}\\
    \vcenter{\hbox{
    \tdplotsetmaincoords{45}{60}
    \begin{tikzpicture}[tdplot_main_coords]  
    \begin{scope}[tdplot_main_coords]
    \coordinate (N) at (0, 0, 1);
    \coordinate (S) at (0, 0, -1);  
    \coordinate (b) at ({2*sqrt(2)*cos(0)}, {2*sqrt(2)*sin(0)}, 0);
    \coordinate (c) at ({2*sqrt(2)*cos(120)}, {2*sqrt(2)*sin(120)}, 0);
    \coordinate (d) at ({2*sqrt(2)*cos(240)}, {2*sqrt(2)*sin(240)}, 0);   
    \coordinate (bc) at ($1/2*(b) + 1/2*(c)$);
    \coordinate (bd) at ($1/2*(b) + 1/2*(d)$);
    \coordinate (cd) at ($1/2*(c) + 1/2*(d)$);
    \coordinate (kb1) at ($(bc)!.6!(N)$);
    \coordinate (kb4) at ($(bd)!.6!(S)$);   
    \begin{scope}[thick,decoration={
    markings,
    mark=at position 0.5 with {\arrow{>}}}
    ] 
    %kb and the marking it is "above"
    \draw[postaction={decorate}, dashed, orange] (bc) -- (S);
    \draw[blue] (kb1) .. controls ({1/2*cos(0)},{1/2*sin(0)},0) .. (kb4);
    %markings
    \draw[postaction={decorate}, orange] (bd) -- (N);
    \draw[postaction={decorate}, orange] (bc) -- (N);
    \draw[postaction={decorate}, orange] (cd) -- (N);
    \draw[postaction={decorate}, dashed, orange] (bd) -- (S);
    \draw[postaction={decorate}, dashed, orange] (cd) -- (S);
    \end{scope}
    %edges
    \begin{scope}[decoration={
    markings,
    mark=between positions 0.46 and .54 step .04 with {\draw(0,-3pt)--(0,3pt);}},very thick]
    \draw[postaction={decorate}] (d)--(b);
    \end{scope}
    \begin{scope}[decoration={
    markings,
    mark=between positions 0.48 and .52 step .04 with {\draw(0,-3pt)--(0,3pt);}},very thick]
    \draw[postaction={decorate}] (b)--(c);
    \end{scope}
    \begin{scope}[decoration={
    markings,
    mark=at position 0.6 with \arrow{stealth}},very thick]
    \draw[postaction={decorate}] (c)--(d);
    \end{scope}    
    \filldraw[orange] (N) circle (0.05em);
    \filldraw[orange] (S) circle (0.05em);
    \filldraw (b) circle (0.05em);
    \filldraw (c) circle (0.05em);
    \filldraw (d) circle (0.05em);
    \node[below] at (bd) {$z''_4$};
    \node[above left] at (bc) {$z''_2$};
    %\node[anchor=south] at (cd) {$c$};
    %\node[anchor=south] at (c) {$\alpha$};
    %\node[anchor=east] at (d) {$\beta$};
    %\node[anchor=north] at (b) {$\gamma$};
    \node[above,scale=.75] at (kb1) {$\epsilon_3$};
    \node[above left,scale=.75] at (kb4) {$\epsilon_4$};    
    \end{scope}
    \end{tikzpicture}
    }}
    \hspace{1.5cm}
    \vcenter{\hbox{
    \tdplotsetmaincoords{45}{60}
    \begin{tikzpicture}[tdplot_main_coords]  
    \begin{scope}[tdplot_main_coords]
    \coordinate (N) at (0, 0, 1);
    \coordinate (S) at (0, 0, -1);  
    \coordinate (b) at ({2*sqrt(2)*cos(0)}, {2*sqrt(2)*sin(0)}, 0);
    \coordinate (c) at ({2*sqrt(2)*cos(120)}, {2*sqrt(2)*sin(120)}, 0);
    \coordinate (d) at ({2*sqrt(2)*cos(240)}, {2*sqrt(2)*sin(240)}, 0);   
    \coordinate (bc) at ($1/2*(b) + 1/2*(c)$);
    \coordinate (bd) at ($1/2*(b) + 1/2*(d)$);
    \coordinate (cd) at ($1/2*(c) + 1/2*(d)$);
    \coordinate (kb1) at ($(bd)!.6!(N)$);
    \coordinate (kb2) at ($(bc)!.6!(N)$);
    \coordinate (kb3) at ($(bc)!.4!(N)$);
    \coordinate (kb4) at ($(bc)!.4!(S)$);   
    \begin{scope}[thick,decoration={
    markings,
    mark=at position 0.5 with {\arrow{>}}}
    ] 
    %kb and the marking it is "above"
    \draw[postaction={decorate}, dashed, orange] (bd) -- (S);
    \draw[blue] (kb1) .. controls ({1/2*cos(0)},{1/2*sin(0)},0) .. (kb4);
    %markings
    \draw[postaction={decorate}, orange] (bd) -- (N);
    \draw[postaction={decorate}, orange] (bc) -- (N);
    \draw[postaction={decorate}, orange] (cd) -- (N);
    \draw[postaction={decorate}, dashed, orange] (bc) -- (S);
    \draw[postaction={decorate}, dashed, orange] (cd) -- (S);
    \end{scope}
    %edges
    \begin{scope}[decoration={
    markings,
    mark=between positions 0.46 and .54 step .04 with {\draw(0,-3pt)--(0,3pt);}},very thick]
    \draw[postaction={decorate}] (b)--(c);
    \end{scope}
    \begin{scope}[decoration={
    markings,
    mark=between positions 0.46 and .54 step .04 with {\draw(0,-3pt)--(0,3pt);}},very thick]
    \draw[postaction={decorate}] (d)--(b);
    \end{scope}
    \begin{scope}[decoration={
    markings,
    mark=at position 0.6 with \arrow{stealth}},very thick]
    \draw[postaction={decorate}] (c)--(d);
    \end{scope}    
    \filldraw[orange] (N) circle (0.05em);
    \filldraw[orange] (S) circle (0.05em);
    \filldraw (b) circle (0.05em);
    \filldraw (c) circle (0.05em);
    \filldraw (d) circle (0.05em);
    \node[above] at (bd) {$z''_4$};
    \node[below right] at (bc) {$z''_1$};
    %\node[anchor=south] at (cd) {$c$};
    %\node[anchor=south] at (c) {$\alpha$};
    %\node[anchor=east] at (d) {$\beta$};
    %\node[anchor=north] at (b) {$\gamma$};
    \node[right,scale=.75] at (kb1) {$\epsilon_4$};
    \node[below right,scale=.75] at (kb4) {$\epsilon_1$};    
    \end{scope}
    \end{tikzpicture}
    }}
    \end{gather*}
    \caption{$K_b$'s image after splitting into face suspensions.}
    \label{fig:kbInFaceSuspensions}
\end{figure}

Figure~\ref{fig:kbInFaceSuspensions} shows a compatible state obtained after splitting $K_b$ into face suspensions; that is, the result of applying the splitting map $\overline{\sigma}$ of Theorem~\ref{thm:splitting_map}.
The image of $K_b$ under the splitting map is the sum over all such compatible states.

To compute the quantum trace, we collide the tangle in each face suspension with a boundary marking and apply the stated skein relations to factor it into an arc in the triangle algebra and an arc in the biangle algebra.
Figure~\ref{fig:kb_collision} illustrates this collision procedure for the top-left face suspension of Figure~\ref{fig:kbInFaceSuspensions}.

\begin{figure}[htbp]
    \centering
    \begin{gather*}
    \vcenter{\hbox{
    \tdplotsetmaincoords{45}{60}
    \begin{tikzpicture}[tdplot_main_coords]  
    \begin{scope}[tdplot_main_coords]
    \coordinate (N) at (0, 0, 1);
    \coordinate (S) at (0, 0, -1);  
    \coordinate (b) at ({2*sqrt(2)*cos(0)}, {2*sqrt(2)*sin(0)}, 0);
    \coordinate (c) at ({2*sqrt(2)*cos(120)}, {2*sqrt(2)*sin(120)}, 0);
    \coordinate (d) at ({2*sqrt(2)*cos(240)}, {2*sqrt(2)*sin(240)}, 0);   
    \coordinate (bc) at ($1/2*(b) + 1/2*(c)$);
    \coordinate (bd) at ($1/2*(b) + 1/2*(d)$);
    \coordinate (cd) at ($1/2*(c) + 1/2*(d)$);
    \coordinate (kb1) at ($(bc)!.6!(N)$);
    \coordinate (kb4) at ($(bd)!.6!(S)$);   
    \begin{scope}[thick,decoration={
    markings,
    mark=at position 0.5 with {\arrow{>}}}
    ] 
    %kb and the marking it is "above"
    \draw[postaction={decorate}, dashed, orange] (bc) -- (S);
    \draw[blue] (kb1) .. controls ({1/2*cos(0)},{1/2*sin(0)},0) .. (kb4);
    %markings
    \draw[postaction={decorate}, orange] (bd) -- (N);
    \draw[postaction={decorate}, orange] (bc) -- (N);
    \draw[postaction={decorate}, orange] (cd) -- (N);
    \draw[postaction={decorate}, dashed, orange] (bd) -- (S);
    \draw[postaction={decorate}, dashed, orange] (cd) -- (S);
    \end{scope}
    %edges
    \begin{scope}[decoration={
    markings,
    mark=between positions 0.48 and .52 step .04 with {\draw(0,-3pt)--(0,3pt);}},very thick]
    \draw[postaction={decorate}] (d)--(b);
    \end{scope}
    \begin{scope}[decoration={
    markings,
    mark=between positions 0.46 and .54 step .04 with {\draw(0,-3pt)--(0,3pt);}},very thick]
    \draw[postaction={decorate}] (b)--(c);
    \end{scope}
    \begin{scope}[decoration={
    markings,
    mark=at position 0.6 with \arrow{stealth}},very thick]
    \draw[postaction={decorate}] (c)--(d);
    \end{scope}    
    \filldraw[orange] (N) circle (0.05em);
    \filldraw[orange] (S) circle (0.05em);
    \filldraw (b) circle (0.05em);
    \filldraw (c) circle (0.05em);
    \filldraw (d) circle (0.05em);
    %\node[anchor=north east] at (bd) {$a$};
    \node[below] at (bd) {$z''_3$};
    \node[above left] at (bc) {$z''_1$};
    %\node[anchor=south] at (c) {$\alpha$};
    %\node[anchor=east] at (d) {$\beta$};
    %\node[anchor=north] at (b) {$\gamma$};
    \node[above,scale=.75] at (kb1) {$\epsilon_1$};
    \node[above left,scale=.75] at (kb4) {$\epsilon_2$};    
    \end{scope}
    \end{tikzpicture}
    }}
    \;\;=\;\;
    \sum_{\mu} (-A^2)^{\frac \mu 2} \hspace{-3mm}
    \vcenter{\hbox{
    \tdplotsetmaincoords{45}{60}
    \begin{tikzpicture}[tdplot_main_coords]  
    \begin{scope}[tdplot_main_coords]
    \coordinate (N) at (0, 0, 1);
    \coordinate (S) at (0, 0, -1);  
    \coordinate (b) at ({2*sqrt(2)*cos(0)}, {2*sqrt(2)*sin(0)}, 0);
    \coordinate (c) at ({2*sqrt(2)*cos(120)}, {2*sqrt(2)*sin(120)}, 0);
    \coordinate (d) at ({2*sqrt(2)*cos(240)}, {2*sqrt(2)*sin(240)}, 0);   
    \coordinate (bc) at ($1/2*(b) + 1/2*(c)$);
    \coordinate (bd) at ($1/2*(b) + 1/2*(d)$);
    \coordinate (cd) at ($1/2*(c) + 1/2*(d)$);
    \coordinate (kb1) at ($(bc)!.6!(N)$);
    \coordinate (kb4) at ($(bd)!.6!(S)$);   
    \coordinate (kb2) at ($(bc)!.4!(S)$);
    \coordinate (kb3) at ($(bc)!.6!(S)$);   
    \begin{scope}[thick,decoration={
    markings,
    mark=at position 0.5 with {\arrow{>}}}
    ] 
    %kb and the marking it is "above"
    \draw[postaction={decorate}, dashed, orange] (bc) -- (S);
    \draw[blue] (kb1) .. controls ({1/2*cos(0)},{1/2*sin(0)},0) .. (kb2);
    \draw[blue] (kb3) .. controls ({1/2*cos(0)},{1/2*sin(0)},0) .. (kb4);
    %markings
    \draw[postaction={decorate}, orange] (bd) -- (N);
    \draw[postaction={decorate}, orange] (bc) -- (N);
    \draw[postaction={decorate}, orange] (cd) -- (N);
    \draw[postaction={decorate}, dashed, orange] (bd) -- (S);
    \draw[postaction={decorate}, dashed, orange] (cd) -- (S);
    \end{scope}
    %edges
    \begin{scope}[decoration={
    markings,
    mark=between positions 0.48 and .52 step .04 with {\draw(0,-3pt)--(0,3pt);}},very thick]
    \draw[postaction={decorate}] (d)--(b);
    \end{scope}
    \begin{scope}[decoration={
    markings,
    mark=between positions 0.46 and .54 step .04 with {\draw(0,-3pt)--(0,3pt);}},very thick]
    \draw[postaction={decorate}] (b)--(c);
    \end{scope}
    \begin{scope}[decoration={
    markings,
    mark=at position 0.6 with \arrow{stealth}},very thick]
    \draw[postaction={decorate}] (c)--(d);
    \end{scope}    
    \filldraw[orange] (N) circle (0.05em);
    \filldraw[orange] (S) circle (0.05em);
    \filldraw (b) circle (0.05em);
    \filldraw (c) circle (0.05em);
    \filldraw (d) circle (0.05em);
    %\node[anchor=north east] at (bd) {$a$};
    \node[below] at (bd) {$z''_3$};
    \node[above left] at (bc) {$z''_1$};
    %\node[anchor=south] at (c) {$\alpha$};
    %\node[anchor=east] at (d) {$\beta$};
    %\node[anchor=north] at (b) {$\gamma$};
    \node[above,scale=.75] at (kb1) {$\epsilon_1$};
    \node[above left,scale=.75] at (kb4) {$\epsilon_2$};   
    \node[right,scale=.75] at (kb2) {$-\mu$};  
    \node[below,scale=.75] at (kb3) {$\mu$}; 
    \end{scope}
    \end{tikzpicture}
    }}
    \end{gather*}
    \caption{Colliding a tangle with a boundary marking.}
    \label{fig:kb_collision}
\end{figure}

The bad-arc relations of Figures~\ref{fig:triangle_bad_arcs} and \ref{fig:biangle_bad_arcs} reduce the sum over states to just two nonzero contributions:
\begin{itemize}
    \item $(\epsilon_1, \epsilon_2, \epsilon_3, \epsilon_4) = (+, -, +, -)$, with quantum trace $\dfrac{\hat{z}''_1\,\hat{z}''_2}{\hat{z}''_3\,\hat{z}''_4}$;
    \item $(\epsilon_1, \epsilon_2, \epsilon_3, \epsilon_4) = (-, +, -, +)$, with quantum trace $\dfrac{\hat{z}''_3\,\hat{z}''_4}{\hat{z}''_1\,\hat{z}''_2}$.
\end{itemize}
Applying the quantized gluing relation \eqref{item:gluing_rel}
\[
    \hat{z}''_1 \hat{z}''_2 \hat{z}''_3 \hat{z}''_4 \;=\; -A^{2},
\]
we obtain
\begin{equation}\label{eq:trace-Kb-single}
    \Tr_{\mathcal{T}}(K_b) \;=\; -A^{-2}(\hat{z}''_1)^{2}(\hat{z}''_2)^{2} - A^{2}(\hat{z}''_1)^{-2}\,(\hat{z}''_2)^{-2}.
\end{equation}

We now extend the calculation to the cables of $K_b$.
The cable $K_b^l$ consists of $l$ framed parallel copies of $K_b$.
After splitting into face suspensions, each of the four face suspensions visited by $K_b$ contains $l$ parallel strands running between the same pair of edge cones as in the single-copy case.

A compatible state of $K_b^l$ assigns a sign to each of the $4l$ boundary points produced by splitting.
The bad-arc relations of Figures~\ref{fig:triangle_bad_arcs} and \ref{fig:biangle_bad_arcs} again force the two endpoints of each strand to carry opposite signs, so fixing the signs at the top boundary marking of the top-left face suspension of Figure~\ref{fig:kbInFaceSuspensions} determines the state at every other endpoint.
A compatible state is therefore specified by an $l$-tuple
\[
    \vec{\epsilon} \;=\; (\epsilon_1, \ldots, \epsilon_l) \;\in\; \{\pm 1\}^l,
\]
recording the states of the $l$ strands at the top marking of the top-left face suspension.
Write $k_+ = k_+(\vec{\epsilon})$ and $k_- = k_-(\vec{\epsilon})$ for the number of positive and negative entries of $\vec{\epsilon}$, so that $k_+ + k_- = l$.

In the computation that follows, we will use unhatted shape parameters to denote the relevant face suspension variables, setting aside their hatted versions for the actual square-root quantized shape parameters.

Fix a state $\vec\epsilon$ and consider its contribution to the top-left face suspension.
After colliding with the boundary marking, this contribution is a product of $l$ triangle factors and $l$ biangle factors.
The triangle factors are all powers of the same triangle generator and so commute among themselves; the biangle factors lie in the commutative algebra $\mathbb{B}^{\otimes 3}$ and so also commute among themselves.
Grouping the like factors together, the contribution can therefore be written as
\[
    \big(-(-A^{2})^{-\frac{1}{2}}[z''_3\,z_3]\big)^{k_+ - k_-} \big(z_3\,z''_1\big)^{k_- - k_+}.
\]
Reordering using the $A$-commutation relations, this becomes
\[
    \alpha(\vec\epsilon)\left(-(-A^{2})^{-\frac{1}{2}}[z''_3\,z_3] (z_3)^{-1}(z''_1)^{-1} \right)^{k_+ - k_-},
\]
for some $\alpha(\vec\epsilon) \in R$.
The expression inside the parentheses is precisely the contribution of a single $K_b$ strand in the top-left face suspension, so this expresses the top-left face suspension's contribution as a power of its single-strand counterpart, up to the prefactor $\alpha(\vec\epsilon)$.

Crucially, performing the analogous reordering in the top-right face suspension produces the analogous expression but with $\alpha(\vec\epsilon)^{-1}$ in place of $\alpha(\vec\epsilon)$, so the prefactors cancel between this pair.
The same cancellation occurs between the other two face suspensions.
Putting this all together, the contribution of $\vec\epsilon$ to $\Tr_\mathcal{T}(K_b^l)$ is exactly the $(k_+ - k_-)$-th power of the single-strand expression $\frac{\hat z''_1 \hat z''_2}{\hat z''_3 \hat z''_4}$ appearing in \eqref{eq:trace-Kb-single}.
Summing over $\vec\epsilon$ and applying the binomial theorem yields the following proposition:

\begin{prop}\label{prop:trace-Kb-cable}
For each integer $l \geq 0$,
\begin{equation}
    \Tr_{\mathcal{T}}(K_b^l) \;=\; \Big(-A^{-2}(\hat z''_1)^2 (\hat z''_2)^2 - A^{2}(\hat z''_1)^{-2} (\hat z''_2)^{-2} \Big)^{l}.
\end{equation}
\end{prop}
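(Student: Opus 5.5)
The plan is to compute $\Tr_{\mathcal{T}}(K_b^l)$ state by state via the splitting map of Theorem~\ref{thm:splitting_map}, and then assemble the answer with the binomial theorem. Since the problem factors through the four face suspensions of Figure~\ref{fig:kbInFaceSuspensions}, I first fix a compatible state for the $l$ parallel strands. I then show that the bad-arc ideals $I^{\mathrm{bad},\pm}$ kill every state in which some strand carries equal signs at its two ends inside a face suspension. This is the same local argument as in the single-copy case, applied strand by strand after colliding each strand with the marking as in Figure~\ref{fig:kb_collision}. Consequently the surviving states are indexed by $\vec\epsilon\in\{\pm1\}^l$, the signs at the top marking of the top-left face suspension, and compatibility propagates these signs around all four face suspensions.

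For a fixed $\vec\epsilon$, I would evaluate each face suspension using the presentation of $\overline{\mathrm{Sk}}(Sf)$ and the embedding $\Tr_{Sf}$. Each strand contributes one triangle generator (or its inverse, according to $\epsilon_j$) and one biangle monomial. Within a face suspension the triangle pieces are powers of a single generator, so they commute. The biangle pieces lie in the commutative algebra $\mathbb{B}^{\otimes3}$, so they commute as well. The only noncommutativity is therefore the interleaving of triangle and biangle pieces. Normal-ordering them into $(\text{single-strand contribution})^{k_+-k_-}$ costs a scalar $\alpha_f(\vec\epsilon)=A^{N_f(\vec\epsilon)}$. Here $N_f$ is a signed count of inversions between the positions of $+$ and $-$ strands, and it is computed from the $A$-commutation relations in $\mathbb{S}f$.

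The key step, and the main obstacle, is showing that $\prod_f\alpha_f(\vec\epsilon)=1$, i.e.\ that $\sum_f N_f(\vec\epsilon)=0$. My approach is to pair the face suspensions: top-left with top-right, which share the edge cone $z''_3$, and bottom-left with bottom-right, which share $z''_4$. In each pair the strands cross the shared marking in the same order. However, the relevant triangle sits on the opposite side of the marking (source versus sink in Figure~\ref{fig:face_suspension_markings}), and the two tensor factors of $\mathbb{S}f$ carry opposite commutation conventions ($b_1a_1=Aa_1b_1$ versus $a_2b_2=Ab_2a_2$). Together these force $N_{\text{TR}}=-N_{\text{TL}}$, and similarly for the other pair. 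I would verify this by an explicit check with $l=2$ and $\vec\epsilon=(+,-)$, and then argue that the general $N_f$ is additive over pairs of strands, so the $l=2$ case suffices. I would also check carefully that the framing half-twist relation \eqref{item:halfTwistRel} introduces no additional power of $(-A^3)^{1/2}$. This holds because parallel blackboard-framed strands meet the markings without twisting relative to one another.

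Granting the cancellation, the state $\vec\epsilon$ contributes $X^{k_+-k_-}$, where $X=\hat z''_1\hat z''_2(\hat z''_3\hat z''_4)^{-1}$. Using the gluing relation $\hat z''_1\hat z''_2\hat z''_3\hat z''_4=-A^2$, this becomes $X=-A^{-2}(\hat z''_1)^2(\hat z''_2)^2$, matching \eqref{eq:trace-Kb-single}; note that $\hat z''_1$ and $\hat z''_2$ commute since they belong to different tetrahedra. Because $X$ and $X^{-1}$ commute, summing over $\vec\epsilon$ gives $\sum_{k}\binom{l}{k}X^{k}X^{-(l-k)}=(X+X^{-1})^l$. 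This is exactly the claimed formula.
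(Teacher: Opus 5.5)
Your proposal follows the paper's argument essentially step for step: bad arcs reduce the compatible states to $\vec\epsilon\in\{\pm1\}^l$, and each face suspension's contribution is rewritten as a scalar $\alpha(\vec\epsilon)$ times the $(k_+-k_-)$-th power of its single-strand factor; these scalars cancel between the top-left/top-right and bottom-left/bottom-right pairs, and the gluing relation plus the binomial theorem give $(X+X^{-1})^l$. The paper only asserts the cancellation $\alpha(\vec\epsilon)\leftrightarrow\alpha(\vec\epsilon)^{-1}$, so your explicit $l=2$ check, together with additivity over pairs of strands, is exactly what would make that step rigorous. Since the pairwise exponents enter symmetrically, the prefactor in fact depends only on $k_\pm$, not on where the signs sit.
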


From this computation of the quantum trace of $K_b$ and its cables, it is clear that all of these skeins lie in the even part of the skein module. 

\begin{rmk}  
The skein module of an ideally triangulated $3$-manifold is naturally a module over the skein algebras of its boundary tori. 
It is natural to expect that the $3$d quantum trace is compatible with some quantum trace map on these boundary tori. 
$\Tr_\mathcal{T}(K_b^l) = \Tr_\mathcal{T}(K_b)^l$ would follow immediately from this fact. 
\end{rmk}

\subsection{Formula for the state integral}\label{sec:stintpq}

We choose the four independent equations $\{\mathcal{C}_{1},\mathcal{C}_{2},\mathfrak{M}_{1},\mathfrak{M}_{2}\}$ and the following choice of quad:
\begin{eqnarray}
(y_{1},\ldots,y_{4})=(z''_{1},z''_{2},z''_{3},z'_{4}),\qquad (y''_{1},\ldots,y''_{4})=(z_{1},z_{2},z_{3},z''_{4}).
 \end{eqnarray}
 With this choice, the matrices $A$ and $B$ and the vector $\nu$ become:
\begin{eqnarray}
A=\left[\begin{array}{cccc}
1 & 1 & 1 & 0
\\
 0 & 0 & 0 & -1
\\
 0 & 0 & 0 & -1
\\
 1 & 0 & 1 & 0
\end{array}\right],\qquad B = \left[\begin{array}{cccc}
0 & 0 & 0 & 1
\\
 1 & -1 & -1 & -1
\\
 0 & -1 & 0 & -1
\\
 1 & -1 & 0 & 0
\end{array}\right],
\end{eqnarray}
\begin{eqnarray}
\nu=(2,-1,-1,1).
\end{eqnarray}
In order to compute the matrices $C$ and $D$, we need to use the equations from the longitudes. We find, however, that in order to satisfy \eqref{condsympl}, we need to use the equations for $-\mathfrak{L}_{i}$, $i=1,2$, rather than \eqref{longitudes}. Then, $C$ and $D$ are explicitly given by:
\begin{eqnarray}
C&=&\left[\begin{array}{cccc}
c-a+b & c+\frac{1}{2}-a+b& c-a+b & -\frac{1}{2}-b+a
\\
 -a+b & -a+b& 1-a+b & a
\\
 \frac{1}{2} & \frac{1}{2} & -\frac{1}{2} & 0
\\
 -1 & -\frac{1}{2} & -1 & 0
\end{array}\right],\nonumber\\
D &=& \left[\begin{array}{cccc}
-\frac{1}{2}+b-a & 1+a-b & a-b & c
\\
 -a & a & a & b
\\
 0 & 0 & 0 & -\frac{1}{2}
\\
 \frac{1}{2} & -\frac{1}{2} & 0 & -\frac{1}{2}
\end{array}\right],\nonumber
\end{eqnarray}
where $a,b,c\in\frac{1}{2}\mathbb{Z}$ are arbitrary. The vector $\nu'$ becomes
\begin{eqnarray}
\nu'=\left(0,\frac{3}{2}\right),
\end{eqnarray}
and the flattening equations can be solved by
\begin{eqnarray}
f=(-2-a_{1},3+a_{1},a_{1},a_{2}),\qquad f''=(3+a_{1}-a_{2},a_{1}-a_{2},3+a_{1}-a_{2},1-a_{1}),
\end{eqnarray}
where $a_{1},a_{2}\in\mathbb{Z}$ are arbitrary. Then $Q(Y,X)$ in \eqref{PartFn} becomes
\begin{eqnarray}\label{Qwhite}
Q(Y,X)&=&\left(\frac{a_{1}}{2}-a_{2}\right)
\left(\pi^{2}+i\pi\hbar-\frac{\hbar^{2}}{4}\right)\nonumber\\
&+&i\pi(2(X_{1}+2X_{2})-Y_{2}+2Y_{4})-2X_{2}(Y_{1}+Y_{3})+Y_{1}(2X_{1}-Y_{2}+Y_{4})\nonumber\\
&-&\frac{Y_{2}^{2}}{2}+Y_{2}(2X_{1}+Y_{4}-Y_{3})+Y_{3}Y_{4}+\frac{3}{2}\pi^{2}+X_{2}^{2}\nonumber\\
&+&\hbar\left(\frac{Y_{2}}{2}-X_{1}-2X_{2}-Y_{4}\right)+\frac{3}{2}i\pi\hbar-\frac{3}{8}\hbar^{2}.
\end{eqnarray}
By choosing $\frac{a_{1}}{2}-a_{2}=-\frac{3}{2}$, we can get rid of the constant terms in \eqref{Qwhite}. We implement the Dehn filling along 
$-\mathfrak{q}\mathfrak{L}_{1}+\mathfrak{p}\mathfrak{M}_{1}$ by the prescription outlined in Section \ref{sec:DehnFillsi}. 
We will be interested in the asymptotics as $\hbar\rightarrow 0$. Therefore assuming\footnote{This is consistent with $\hbar\in i\mathbb{R}_{<0}$.} $\mathrm{Re}\left(\frac{X_{1}}{\mathfrak{q}}\right)<0$ and setting $X_{2}=0$, we define
\begin{equation}\label{partfnlimit}
Z_{\hbar}^{(\mathfrak{p},\mathfrak{q})}(\mathcal{T},\vec{a}):=-2ie^{-\hbar\frac{\mathfrak{s}}{4\mathfrak{q}}}\int\frac{dX_{1}}{\sqrt{2\pi \hbar \mathfrak{q}}} \prod_{i=1}^{4}\frac{dY_{i}}{\sqrt{2\pi \hbar}}e^{\frac{1}{\hbar}Q_{(\mathfrak{p},\mathfrak{q})}(Y,X_{1})}\sinh\left(\frac{X_{1}-i\pi \mathfrak{s}}{\mathfrak{q}}\right)y^{\vec{a}}x_{1}^{a_{5}}\prod_{i=1}^{4}\psi_{\hbar}(Y_{i}),
\end{equation}
where
\begin{eqnarray}\label{Qpq}
Q_{(\mathfrak{p},\mathfrak{q})}(Y,X_{1})&:=&i\pi(2X_{1}-Y_{2}+2Y_{4})+Y_{1}(2X_{1}-Y_{2}+Y_{4})-\frac{Y_{2}^{2}}{2}+Y_{2}(2X_{1}+Y_{4}-Y_{3})+Y_{3}Y_{4}\nonumber\\
&-&\hbar\left(X_{1}+Y_{4}-\frac{Y_{2}}{2}\right)+\frac{\mathfrak{p}}{\mathfrak{q}}X_{1}^{2}+\frac{2\pi i}{\mathfrak{q}}X_{1}+\frac{\mathfrak{s}\pi^{2}}{\mathfrak{q}},\nonumber\\
x_{1}&:=&\exp X_{1},\qquad y^{\vec{a}}:=\prod_{i=1}^{4}y_{i}^{a_{i}}, \qquad \vec{a}\in\mathbb{Z}^{5}.
\end{eqnarray}

%%%%%%%%%%%%%%%%%%%%%%%%%%%%%%%%%%%%%%%%%%%%%%%%

\section{The length conjecture for twist knots} \label{sec:proof}

This section is dedicated to putting it all together, culminating in a proof of Theorem~\ref{thm:main}.
The main idea is to expand both integrals \eqref{integralCZ} and \eqref{partfnlimit}, and explicitly compare their one-loop terms using a change of variables. 

\subsection{Expanding the state integral to first order}\label{sec:stateintexp}

First, we determine the critical points of the integrand of $Z^{(\mathfrak{p},\mathfrak{q})}_{\hbar}(\mathcal{T},\vec{a})$, defined in Section \ref{sec:stintpq}, as $\hbar\rightarrow 0$. Using the expansion \eqref{eq:psi expansion} of $\psi_{\hbar}(Y_{i})$, it is straightforward to show
\begin{eqnarray}\label{h0expform}
e^{\frac{1}{\hbar}Q_{(\mathfrak{p},\mathfrak{q})}(Y,X_{1})}\sinh\left(\frac{X_{1}-i\pi \mathfrak{s}}{\mathfrak{q}}\right)y^{\vec{a}}x_{1}^{a_{5}}\prod_{i=1}^{4}\psi_{\hbar}(Y_{i})=e^{\frac{1}{\hbar}\mathcal{W}_{0}(Y,X_{1})+\mathcal{O}(\hbar^{0})},\quad\text{as \ }\hbar\rightarrow 0,
\end{eqnarray}
where
\begin{eqnarray}
\mathcal{W}_{0}(Y,X_{1})&:=&i\pi(2X_{1}-Y_{2}+2Y_{4})+Y_{1}(2X_{1}-Y_{2}+Y_{4})-\frac{Y_{2}^{2}}{2}+Y_{2}(2X_{1}+Y_{4}-Y_{3})+Y_{3}Y_{4}\nonumber\\
&+&\frac{\mathfrak{p}}{\mathfrak{q}}X_{1}^{2}+\frac{2\pi i}{\mathfrak{q}}X_{1}+\frac{\mathfrak{s}\pi^{2}}{\mathfrak{q}}+\sum_{i=1}^{4}\mathrm{Li}_{2}\left(e^{-Y_{i}}\right).
\end{eqnarray}
Then, the critical points correspond to the solutions of $\exp(\frac{\partial \mathcal{W}_{0}}{\partial Y_{i}})=\exp(\frac{\partial \mathcal{W}_{0}}{\partial X_{1}})=1,$ which are equivalent to the (deformed) gluing equations:
\begin{eqnarray}\label{defglueqs}
\frac{(y_{1}-1)x_{1}^{2}y_{4}}{y_{1}y_{2}}&=&1,\nonumber\\
-\frac{(y_{2}-1)x_{1}^{2}y_{4}}{y_{2}^{2}y_{1}y_{3}}&=&1,\nonumber\\
\frac{(y_{3}-1)y_{4}}{y_{2}y_{3}}&=&1,\nonumber\\
\frac{(y_{4}-1)y_{1}y_{2}y_{3}}{y_{4}}&=&1,\nonumber\\
y_{1}^{2}y_{2}^{2}x_{1}^{\frac{2\mathfrak{p}}{\mathfrak{q}}}e^{\frac{2\pi i}{\mathfrak{q}}}&=&1.
\end{eqnarray}
These equations imply
\begin{eqnarray}
y^{2\mathfrak{q}}_{1}y_{2}^{2\mathfrak{q}}x_{1}^{2\mathfrak{p}}=e^{-\mathfrak{q}\mathfrak{L}_{1}}e^{\mathfrak{p}\mathfrak{M}_{1}}=1,
\end{eqnarray}
as expected, since they must reproduce the Dehn filling condition. 
\begin{rmk} A nondegenerate solution of \eqref{defglueqs} is equivalent to a solution of the following simplified system
\begin{align}\label{simplglueqs}
&y_{2} = y_{1}, \quad y_{3} = -y_{1}^{-1}, \quad y_{4} = \frac{y_{1}}{1+y_{1}}, \notag \\
&y_{1}^{4}\, x_{1}^{\frac{2\mathfrak{p}}{\mathfrak{q}}}\, e^{\frac{2\pi i}{\mathfrak{q}}} = 1, \quad y_{1}^{2} + (1-x_{1}^{2})\, y_{1} + x_{1}^{2} = 0.
\end{align}
\end{rmk}
From \eqref{simplglueqs}, it is evident that the critical points are finite. We then choose a critical point
\begin{eqnarray}
\mathbf{Y}^{(c)}:=(Y^{(c)}_{1},\ldots,Y^{(c)}_{4},X^{(c)})
\end{eqnarray}
and make the replacement $\mathbf{Y}\rightarrow \mathbf{Y}+\mathbf{Y}^{(c)}$ in the exponent of the $\hbar\rightarrow 0$ expansion \eqref{h0expform} and  Taylor expand around $\mathbf{Y}=0$. This will result precisely in an integrand of the form of the one used in Appendix \ref{app:Feynmann}, where the potential function \eqref{potfn} is identified with $\mathcal{W}_{0}(\mathbf{Y})$ and the vertex functions \eqref{verticesG} can be read from the expression:
\begin{eqnarray}
\sum_{k\geq 1}\frac{1}{k!}\Gamma_{i_{1},\ldots,i_{k}}(\mathbf{Y}^{(c)},\hbar)\mathbf{Y}_{i_{1}}\cdots \mathbf{Y}_{i_{k}}&=&a_{i}Y_{i}+ fB^{-1}X-\frac{1}{2}YB^{-1}\nu\nonumber\\
&+&\frac{X_{1}}{\mathfrak{q}}-\sum_{s=1}^{\infty}(-1)^{s}\frac{(2X_{1}/\mathfrak{q})^s}{s!}\mathrm{Li}_{1-s}\left(e^{\frac{2\pi i \mathfrak{s}}{\mathfrak{q}}}e^{-2\frac{X_{1}^{(c)}}{\mathfrak{q}}}\right)\nonumber\\
&+&\sum_{i=1}^{4}\sum_{s=1}^{2}(-1)^{s}\frac{Y_{i}^{s}}{s!}\sum_{n=1}^{\infty}B_{n}\frac{\hbar^{n-1}}{n!}\mathrm{Li}_{2-n-s}\left(e^{-Y_{i}^{(c)}}\right)\nonumber\\
&+&\sum_{i=1}^{4}\sum_{s=3}^{\infty}(-1)^{s}\frac{Y_{i}^{s}}{s!}\sum_{n=0}^{\infty}B_{n}\frac{\hbar^{n-1}}{n!}\mathrm{Li}_{2-n-s}\left(e^{-Y_{i}^{(c)}}\right).
\end{eqnarray}
Denote by $Z_{\hbar}^{(\mathfrak{p},\mathfrak{q})}(\mathbf{y}^{(c)},\mathcal{O}_{K_{b}^{l}})$ the integral \eqref{partfnlimit}, with the insertion of the operator
\begin{eqnarray}
\mathcal{O}_{K_{b}^{l}}:=(-1)^{l}\left(e^{-\frac{\hbar}{2}}y_{1}y_{2}+\frac{e^{\frac{\hbar}{2}}}{y_{1}y_{2}}\right)^{l} = \Tr_\mathcal{T}(K_b^l),
\end{eqnarray}
expanded around the critical point $\mathbf{y}^{(c)}=\exp(\mathbf{Y}^{(c)})$. Then, we can write \eqref{expoperator} in this case as
\begin{eqnarray}\label{expansionlengthconj}
\frac{Z_{\hbar}^{(\mathfrak{p},\mathfrak{q})}(\mathbf{y}^{(c)},\mathcal{O}_{K_{b}^{l}})}{Z_{\hbar}^{(\mathfrak{p},\mathfrak{q})}(\mathbf{y}^{(c)},\mathbf{1})}=\exp\left(\sum_{k\geq 0}\hbar^{k}W^{\mathcal{I}}_{k}(K_{b}^{l})\right).
\end{eqnarray}
In the notation of \eqref{operatorinserted}, we have that $\vec{a}\in\mathbb{Z}(1,1,0,0,0)$. In the following we use the fact that, in order to obtain the twist knot $\mathcal{K}_{p}$, we can set the Dehn filling parameters to (see Section \ref{sec:dehnFillingParams})
\begin{eqnarray}
|\mathfrak{p}|=1,\qquad p=-\mathrm{sgn}(\mathfrak{p})\mathfrak{q}, \qquad \mathfrak{q}\in\mathbb{Z}_{\geq 1}.
\end{eqnarray}

Denote $\mathbf{v}:=(1,1,0,0,0)$. Then, the expressions for $W^{\mathcal{I}}_{0}(K_{b}^{l})$ and  $W^{\mathcal{I}}_{1}(K_{b}^{l})$ in \eqref{expressW} greatly simplify to

\begin{align}\label{explicitWstate}
\exp\left(W^{\mathcal{I}}_{0}(K_{b}^{l})\right) &= (-1)^{l}\left(y^2+\frac{1}{y^{2}}\right)^{l}, \\
W^{\mathcal{I}}_{1}(K_{b}^{l}) &= \frac{1}{(y^{2}+y^{-2})^{l}}\sum_{k=0}^{l}\binom{l}{k}(2k-l)\,y^{4k-2l} \notag \\
&\qquad \times \left(-\frac{1}{2}+\widetilde{S}_{1}^{\mathbf{v}}(u)+\left(k-\frac{l}{2}-\frac{1}{2}\right)\mathbf{v}_{i}\Pi_{ij}\mathbf{v}_{j}\right),
\end{align}
where
\begin{eqnarray}
u&:=&\mathfrak{p}\mathfrak{q}=\mathrm{sgn}(\mathfrak{p})\mathfrak{q},\qquad y:=y^{(c)}_{1},
\end{eqnarray}
when evaluated at a critical point (i.e. a solution of \eqref{simplglueqs}). The notation in \eqref{explicitWstate} is as in \eqref{propagator} and \eqref{individualS}. With the use of \eqref{simplglueqs} we can arrive at the explicit form for $\widetilde{S}_{1}^{\mathbf{v}}(u)$ and $\mathbf{v}_{i}\Pi_{ij}\mathbf{v}_{j}$:
\begin{align}
\widetilde{S}_{1}^{\mathbf{v}}(u) &= \frac{1}{(y^{2}+1)(4uy^{2}+y^{2}-4u-2y-1)^{2}} \Big[ (8u^{2}+14u+3)y^{6} - 8(1+u)y^{5} \notag \\
&\qquad - 2(4u^{2}+9u+1)y^{4} - (8u^{2}-2u-3)y^{2} + 8u(u+y) + 2u \Big], \\
\mathbf{v}_{i}\Pi_{ij}\mathbf{v}_{j} &= \frac{2(y^{2}-1)}{(1+4u)y^{2}-2y-1-4u}.
\label{defsstateelts}
\end{align}

\subsection{Expanding the colored Jones polynomial}

Our starting point is the integral expression \eqref{integralCZ} with $l=0$, and with \eqref{VnCZ} truncated at $\mathcal{O}(n^{-2})$, given in \eqref{VNtruncated}. In addition we allow for the insertion of a monomial in the integration variables $t$ and $s$. For this purpose it is convenient to define:
\begin{eqnarray}
Z_{1}:=2\pi i s,\qquad Z_{2}:=2\pi i t,\qquad \hbar:=\frac{2\pi i}{n}
\end{eqnarray}
and
\begin{align}\label{CZinsertion}
Z_{\hbar}(\mathcal{K}_{p}, \vec{a}) &:= \int z^{\vec{a}}\, e^{\frac{1}{\hbar}V_{n}(Z)}\, dZ_{1}\, dZ_{2}, \\
V_{n}(Z) &:= (2p+1)\frac{Z_{1}^{2}}{2} - \pi i(2p+3) Z_{1} - 2\pi i\, Z_{2} \notag \\
&\quad + \mathrm{Li}_{2}(e^{Z_{1}+Z_{2}}) + \mathrm{Li}_{2}(e^{Z_{2}-Z_{1}}) - 3\, \mathrm{Li}_{2}(e^{Z_{2}}) + \frac{\pi^{2}}{6} \notag \\
&\quad + \hbar \left( \frac{1}{2}\left[ \mathrm{Li}_{1}(e^{Z_{1}+Z_{2}}) + \mathrm{Li}_{1}(e^{Z_{2}-Z_{1}}) + 2 Z_{2} \right] + \ln \sin(-i Z_{1}) \right), \\
z^{\vec{a}} &:= \prod_{i=1}^{2} z_{i}^{a_{i}}, \qquad z_{i} = \exp(Z_{i}), \quad \vec{a} \in \mathbb{Z}^{2}.
\end{align}
Then we have a setup where we can apply the techniques of Appendix \ref{app:Feynmann} with the potential function \eqref{potfn} identified with $\mathcal{V}(Z):=V_{n}(Z)|_{\hbar=0}$, hence the critical point equations are given by
\begin{align}\label{critCZ}
\exp\!\left(\frac{\partial \mathcal{V}(Z)}{\partial Z_{1}}\right) &= \frac{(z_{1}-z_{2})\, z_{1}^{2p}}{z_{1}z_{2}-1} = 1, \nonumber\\
\exp\!\left(\frac{\partial \mathcal{V}(Z)}{\partial Z_{2}}\right) &= \frac{(z_{2}-1)^{3}\, z_{1}}{(z_{1}z_{2}-1)(z_{1}-z_{2})} = 1.
\end{align}
Therefore, given a solution $\mathbf{Z}^{(c)}=(Z_{1}^{(c)},Z_{2}^{(c)})$ of \eqref{critCZ}, we proceed as in Section \ref{sec:stateintexp} and expand $Z_{\hbar}(\mathcal{K}_{p},\vec{a})$ around $\mathbf{Z}^{(c)}$ as $\hbar\rightarrow 0$. The vertices can be read from:
\begin{eqnarray}
\sum_{k\geq 1}\frac{1}{k!}\Gamma_{i_{1},\ldots,i_{k}}(\mathbf{Z}^{(c)},\hbar)Z_{i_{1}}\cdots Z_{i_{k}}&=&\frac{1}{2}\sum_{k\geq 1}\sum_{\varepsilon\in\{\pm1\}}\sum_{k_{1}+k_{2}=k}\frac{\varepsilon^{k_{1}}}{k_{1}!k_{2}!}\mathrm{Li}_{1-k}\left(e^{\varepsilon Z_{1}^{(c)}+Z_{2}^{(c)}}\right)Z_{1}^{k_{1}}Z_{2}^{k_{2}}\nonumber\\
&+&\frac{1}{\hbar}\sum_{k\geq 3}\sum_{\varepsilon\in\{\pm1\}}\sum_{k_{1}+k_{2}=k}\frac{\varepsilon^{k_{1}}}{k_{1}!k_{2}!}\mathrm{Li}_{2-k}\left(e^{\varepsilon Z_{1}^{(c)}+Z_{2}^{(c)}}\right)Z_{1}^{k_{1}}Z_{2}^{k_{2}}\nonumber\\
&-&\frac{3}{\hbar}\sum_{k\geq 3}\frac{1}{k!}\mathrm{Li}_{2-k}\left(e^{Z_{2}^{(c)}}\right)Z_{2}^{k}\nonumber\\
&+&\sum_{k\geq 1}\frac{1}{k!}\frac{\partial^{k}}{\partial Z_{1}^{k}}\ln\sin(-iZ^{(c)}_{1})Z_{1}^{k}+a_{i}Z_{i}+Z_{2}.
\end{eqnarray}
Denote by $Z_{\hbar}(\mathcal{K}_{p};\mathbf{z}^{(c)},K_{b}^{l})$ the integral \eqref{CZinsertion}, with the insertion of the operator
\begin{eqnarray}
(-2\cos(2\pi s))^{l}=(-1)^{l}\left(z_{1}+\frac{1}{z_{1}}\right)^{l},
\end{eqnarray}
expanded around the critical point $\mathbf{z}^{(c)}=\exp(\mathbf{Z}^{(c)})$. Then we can write \eqref{expoperator} in this case as
\begin{eqnarray} \label{expansionlengthconjJ}
\frac{Z_{\hbar}(\mathcal{K}_{p};\mathbf{z}^{(c)},K_{b}^{l})}{Z_{\hbar}(\mathcal{K}_{p};\mathbf{z}^{(c)},\mathbf{1})}=\exp\left(\sum_{k\geq 0}\hbar^{k}W^{\mathcal{J}}_{k}(K_{b}^{l})\right).
\end{eqnarray}
In the notation of \eqref{operatorinserted}, we have that $\vec{a}\in\mathbb{Z}(1,0)$. Denote $\mathbf{e}:=(1,0)$, then the expressions for $W^{\mathcal{J}}_{0}(K_{b}^{l})$ and  $W^{\mathcal{J}}_{1}(K_{b}^{l})$ in \eqref{expressW} simplify to
\begin{align}
\exp\left(W^{\mathcal{J}}_{0}(K_{b}^{l})\right) &= (-1)^{l}\left(z+\frac{1}{z}\right)^{l}, \\
W^{\mathcal{J}}_{1}(K_{b}^{l}) &= \frac{1}{(z+z^{-1})^{l}}\sum_{k=0}^{l}\binom{l}{k}(2k-l)\, z^{2k-l} \notag \\
&\qquad \times \left(\widetilde{S}_{1}^{\mathbf{e}}(z)+\left(k-\frac{l}{2}-\frac{1}{2}\right)\mathbf{e}_{i}\Pi_{ij}\mathbf{e}_{j}\right).
\end{align}
The expression for $\widetilde{S}_{1}^{\mathbf{e}}(z)$ is a bit too convoluted to record here (see the Maple file submitted with the arXiv version of this paper). On the other hand
\begin{equation}\label{eqpropagatorCZ}
\mathbf{e}_{i}\Pi_{ij}\mathbf{e}_{j} = \Pi_{1,1} = \frac{-(2z_{2}+1)(z_{1}^{2}+1)+(z_{2}^{2}+2z_{2}+3)\, z_{1}}{((4p-1)z_{2}+2p+1)(z_{1}^{2}+1)+((-2p+1)(z_{2}^{2}+2z_{2})-6p-3)\, z_{1}},
\end{equation}
where we did not use any of the critical point equations \eqref{critCZ}.

\subsection{A proof to first order}
\begin{thm}\label{thm:main_theorem}
Let $p \geq 6$, and let $W^{\mathcal{J}}_{k}(K_{b}^{l})$ and
$W^{\mathcal{I}}_{k}(K_{b}^{l})$ be the coefficients of
\eqref{expansionlengthconjJ} and \eqref{expansionlengthconj}, so that
\begin{align*}
\left.\frac{J_{n,2}\!\left(\mathcal{K}_{p}\cup K_{b}^{l};q\right)}
{J_{n}\!\left(\mathcal{K}_{p};q\right)}\right|_{q=e^{2\pi i/n}}
\;&\sim\;
\exp\!\left(\sum_{k\geq 0}\hbar^{k}W^{\mathcal{J}}_{k}(K_{b}^{l})\right),
\qquad n \to \infty,\\
\frac{Z_{\hbar}^{(\mathfrak{p},\mathfrak{q})}\!\left(\mathbf{y}^{(c)},
\mathcal{O}_{K_{b}^{l}}\right)}
{Z_{\hbar}^{(\mathfrak{p},\mathfrak{q})}\!\left(\mathbf{y}^{(c)},\mathbf{1}\right)}
\;&=\;
\exp\!\left(\sum_{k\geq 0}\hbar^{k}W^{\mathcal{I}}_{k}(K_{b}^{l})\right).
\end{align*}
Then, under the identifications
\begin{eqnarray}\label{changevars}
p=-u,\quad z_{1}=y^2,\quad z_{2}=\frac{x_{1}^{-2}+y^{2}}{x_{1}^{-2}y^{2}+1},  
\end{eqnarray}
a nondegenerate solution of the critical point equations \eqref{critCZ}
implies
\begin{eqnarray}
\exp\left(W^{\mathcal{J}}_{0}(K_{b}^{l})\right)=\exp\left(W^{\mathcal{I}}_{0}(K_{b}^{l})\right),\qquad W^{\mathcal{J}}_{1}(K_{b}^{l})=W^{\mathcal{I}}_{1}(K_{b}^{l})
\end{eqnarray}
for all $0\leq l \leq 2|p|$.
\end{thm}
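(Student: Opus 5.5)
Under the change of variables \eqref{changevars}, the leading-order identity is immediate: $\exp(W^{\mathcal{J}}_0(K_b^l))=(-1)^l(z+z^{-1})^l$ with $z=z_1=y^2$ becomes $(-1)^l(y^2+y^{-2})^l=\exp(W^{\mathcal{I}}_0(K_b^l))$. The first-order identity is the real content. Since both $W_1$'s have the same structure in $l$ (a binomial sum weighted by $(2k-l)$ times an affine function of $k$), it suffices to match the two $l$-independent ingredients. Comparing coefficients of $k$ and of the constant term in $k$, I would aim to prove
\begin{equation*}
\mathbf{e}_i\Pi_{ij}\mathbf{e}_j=\mathbf{v}_i\Pi_{ij}\mathbf{v}_j,\qquad \widetilde{S}_1^{\mathbf{e}}(z)=-\tfrac12+\widetilde{S}_1^{\mathbf{v}}(u),
\end{equation*}
both evaluated at corresponding critical points. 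Together with the identity of the binomial prefactors, $z^{2k-l}=y^{4k-2l}$ and $(z+z^{-1})^l=(y^2+y^{-2})^l$, this gives $W^{\mathcal{J}}_1=W^{\mathcal{I}}_1$ for every $l$. The range $0\le l\le 2|p|$ enters only because the colored Jones asymptotics of Theorem~\ref{thmCZ} and the basis of Theorem~\ref{thm:twist_knot_basis} are restricted to it.

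The first step is to check that \eqref{changevars} maps critical points to critical points. Starting from a nondegenerate solution of \eqref{simplglueqs} with $p=-u$, I would set $z_1=y^2$ and $z_2=(x_1^{-2}+y^2)/(x_1^{-2}y^2+1)$, and verify both equations of \eqref{critCZ}. The elimination runs as follows:
\begin{itemize}
\item the quadratic $y^2+(1-x_1^2)y+x_1^2=0$ lets us solve $x_1^2=(y^2+y)/(y-1)$ rationally in $y$;
\item this makes $z_2$ a rational function of $y$;
\item the second equation of \eqref{critCZ} should then reduce to an identity in $y$;
\item the first equation should reduce to the Dehn filling condition $y^4x_1^{2\mathfrak p/\mathfrak q}e^{2\pi i/\mathfrak q}=1$, after raising to the appropriate power $\mathfrak q$ and using $|\mathfrak p|=1$ and $p=-\mathrm{sgn}(\mathfrak p)\mathfrak q$.
\end{itemize}
Some care with branches is needed. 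I would restrict to the geometric critical point and use $\mathrm{Re}(X_1/\mathfrak q)<0$, together with the imaginary-part conventions, to fix the roots of unity. Conversely, one shows the map is surjective onto nondegenerate solutions, so "a nondegenerate solution of \eqref{critCZ} implies" the identities.

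Next comes the propagator identity. I would substitute $z_1=y^2$ and $z_2(y)$ into \eqref{eqpropagatorCZ} with $p=-u$, clear denominators, and compare the result with $2(y^2-1)/((1+4u)y^2-2y-1-4u)$ modulo the ideal generated by the critical point relations. This is a finite rational-function computation.

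The main obstacle is the identity $\widetilde S_1^{\mathbf e}=-\tfrac12+\widetilde S_1^{\mathbf v}$. The left side comes from a two-variable Feynman expansion including the one-loop $\hbar$-corrections $\tfrac12\mathrm{Li}_1$ and $\ln\sin(-iZ_1)$. The right side comes from a five-variable one, with the $\sinh$ kernel and the quantum dilogarithm Bernoulli corrections. My plan is to do this reduction by computer algebra:
\begin{enumerate}
\item express both sides as rational functions of $y$ (and $u$), using the Appendix formulas for vertices and propagators;
\item compute the difference modulo the critical point relations, via a Gröbner basis or resultant elimination of $x_1$;
\item check that the result vanishes identically in $u$.
\end{enumerate}
If a discrepancy appears, I expect it to be an additive constant coming from the normalization conventions, specifically the $-\tfrac12$ from the $\hbar X_1$ and $\hbar Y_4$ linear terms and from the $\sinh$ factor. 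That constant would be fixed by tracking those linear-in-$\hbar$ terms carefully. Treating $u$ as a formal parameter makes the identity hold for all $p\ge 6$ at once.
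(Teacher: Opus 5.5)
\textbf{There is a gap in the critical-point step.} Your reduction of $W^{\mathcal{J}}_1=W^{\mathcal{I}}_1$ to the two $l$-independent identities $\mathbf{e}_i\Pi_{ij}\mathbf{e}_j=\mathbf{v}_i\Pi_{ij}\mathbf{v}_j$ and $\widetilde{S}_1^{\mathbf{e}}=\widetilde{S}_1^{\mathbf{v}}-\tfrac12$, checked by computer algebra, is what the paper does. The problem is the critical-point correspondence: you prove only the direction the theorem does not need.

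The statement starts from a nondegenerate solution of \eqref{critCZ}. You check that a solution of \eqref{simplglueqs} maps to one, and then you only assert surjectivity. That converse is where the content lies:
\begin{itemize}
\item Under \eqref{changevars}, $x_1^{-2}=(z_1-z_2)/(z_1z_2-1)$ is determined, but $y$ is fixed only up to sign.
\item Away from $y^2=1$ and $y^2=-x_1^{-2}$, the second equation of \eqref{critCZ} splits into two branches $x_1^2=-y(y+\varepsilon)/(1-\varepsilon y)$, $\varepsilon=\pm1$, on which $z_2=1+\varepsilon(y-y^{-1})$.
\item Only $\varepsilon=+1$ is the quadratic in \eqref{simplglueqs}, and the $\varepsilon=-1$ branch at $y$ is exactly the $\varepsilon=+1$ branch at $-y$.
\end{itemize}
Since $\mathbf{v}_i\Pi_{ij}\mathbf{v}_j$ and $\widetilde{S}_1^{\mathbf{v}}$ are not even in $y$ (note the $-2y$ in the denominator of $\mathbf{v}_i\Pi_{ij}\mathbf{v}_j$), the propagator identity fails for the wrong square root. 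There $\mathbf{e}_i\Pi_{ij}\mathbf{e}_j$ equals $\mathbf{v}_i\Pi_{ij}\mathbf{v}_j$ evaluated at $-y$, which differs from its value at $y$. So you must show that the Chen--Zhu critical point, with the $y$ you actually use, lies on the $\varepsilon=+1$ branch and corresponds to the geometric critical point of the state integral. Your proposal never does this.

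The paper closes the gap with an input you never invoke: Proposition 6.3 of \cite{CZ1}. It says the nondegenerate critical point of \eqref{critCZ} is unique and satisfies $-\pi<\mathrm{Im}\,Z_i<0$ for $i=1,2$. Taking $Y=Z_1/2$ gives $-\pi<\mathrm{Im}\,Y<0$, so $\sin(\arg y)<0$. Then $\mathrm{Im}\,z_2=\varepsilon\,(|y|+|y|^{-1})\sin(\arg y)$ must be negative, which forces $\varepsilon=1$. This also places the point among the nondegenerate solutions of \eqref{simplglueqs}.

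Your appeal to $\mathrm{Re}(X_1/\mathfrak{q})<0$ and the imaginary-part conventions only handles the $\mathfrak{q}$-th-root-of-unity ambiguity in the Dehn filling equation. It does not handle this sign/branch ambiguity, which is what decides whether the first-order identities hold.
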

\begin{proof}
Consider a nondegenerate solution $(z_{1},z_{2})$ of \eqref{critCZ}. According to Proposition 6.3 of \cite{CZ1}, this solution is unique, and more importantly, it must satisfy $-\pi<\mathrm{Im}(Z_{i})<0$, $i=1,2$. Then consider the change of variables \eqref{changevars}. This implies $-\pi< \mathrm{Im}(Y)<0$, $Y=\ln y$. Moreover, equations  \eqref{critCZ} become
\begin{eqnarray}
&&x_{1}^{2}y^{4u}=1,\\
&&(y^{2}-1)^{2}(y^{2}+x_{1}^{-2})(1+y^{2}x_{1}^{-2}+(x_{1}^{-2}-1)y)(1+y^{2}x_{1}^{-2}+(1-x_{1}^{-2})y)=0.
\label{eqsproof1}
\end{eqnarray}
For a nondegenerate solution $(z_{1},z_{2})$ we must have $y^{2}\neq 1$ and $y^{2}+x_{1}^{-2}\neq 0$, therefore \eqref{eqsproof1} implies
\begin{eqnarray}\label{eqsproof2}
x_{1}^{2}=-\frac{y(y+\varepsilon)}{1-\varepsilon y},
\end{eqnarray}
where $\varepsilon$ is either $1$ or $-1$. Under \eqref{eqsproof2}, we obtain $z_{2}=1+\varepsilon(y-y^{-1})$, so it is straightforward to see that if $-\pi<\mathrm{Im}(Z_{2})<0$ we must have that $(x_{1},y)$ solve the equation with $\varepsilon=1$. So, we have concluded that given a nondegenerate solution $(z_{1},z_{2})$ of \eqref{critCZ}, the variables $(x_{1},y)$ defined by the change of variables \eqref{changevars} must be a nondegenerate solution of \eqref{simplglueqs}. Therefore,
\begin{eqnarray}
\exp\left(W^{\mathcal{J}}_{0}(K_{b}^{l})\right)=\exp\left(W^{\mathcal{I}}_{0}(K_{b}^{l})\right)
\end{eqnarray}
under \eqref{changevars}. It is tedious but straightforward (see the Maple file attached to the arXiv version of this paper) to show that under the same change of variables

\begin{equation}
\widetilde{S}_{1}^{\mathbf{v}}(u)=\frac{1}{2}+\widetilde{S}_{1}^{\mathbf{e}}(z),\qquad \mathbf{v}_{i}\Pi_{ij}\mathbf{v}_{j}=  \mathbf{e}_{i}\Pi_{ij}\mathbf{e}_{j}, 
\end{equation}
where $\mathbf{v}_{i}\Pi_{ij}\mathbf{v}_{j}$ and $\mathbf{e}_{i}\Pi_{ij}\mathbf{e}_{j}$ are defined in equations \eqref{defsstateelts} and \eqref{eqpropagatorCZ}, respectively. This implies
\begin{equation}
 W^{\mathcal{J}}_{1}(K_{b}^{l})=W^{\mathcal{I}}_{1}(K_{b}^{l}), 
\end{equation}
completing the proof. 
\end{proof}

\begin{rmk}
Insertions of the meridian are trivial with respect to the length conjecture. 
Indeed, an addition of the meridian modifies the $n$th colored Jones polynomial of a knot by a factor of $-(q^{n/2}+q^{-n/2})$, which simply goes to $2$ when evaluated at an $n$th root of unity. 
This is compatible with the state integral case, since $e^{\mathfrak{M}_{2}}$ is identified with $x_{2}^{2}=\exp(2X_{2})$, as can be seen by keeping the variable $X_{2}$ in \eqref{Qpq}: the equations \eqref{defglueqs} get modified, and it is straightforward to solve for $x_{2}$: 
\begin{equation}
x_{2}^{2}=\frac{(y_{1}-1)y_{4}x^{2}_{1}}{y_{1}y_{2}}=-\frac{(y_{1}-1)y_{2}y_{3}}{(y_{2}-1)}=e^{\mathfrak{M}_{2}},    
\end{equation}
therefore an insertion of $K_{m}^{l}$ in $Z_{\hbar}^{(\mathfrak{p},\mathfrak{q})}(\mathcal{T})$ will be equivalent to inserting $(x^{2}_{2}+x_{2}^{-2})^{l}$, which at $X_{2}=0$ just becomes an overall factor of $2^{l}$ in $Z_{\hbar}^{(\mathfrak{p},\mathfrak{q})}(\mathcal{T})$, leaving the expansion \eqref{expansionlengthconj} unaffected. This is compatible with the length conjecture, as stated in \cite{AGLR}.
\end{rmk}

%%%%%%%%%%%%%%%%%%%%%%%%%%%%%%%%%%%%%%%%%%%%%%%%

\section{The Dehn-filled quantum trace} \label{sec:filled_quantum_trace}
We conclude this paper with an interesting conjecture suggested by our work. 

As discussed in Section \ref{sec:quantum_trace}, there is a natural $3$d quantum trace map from the stated skein module of $M$ to the square-root quantum gluing module of its ideal triangulation, denoted
\[ \Tr_{\mathcal{T}}: \mathrm{Sk}(M) \to \mathrm{SQGM}_{\mathcal{T}}(M). \]
By classical results on skein modules (see, for instance, \cite{Pr}), the skein module of a Dehn-filled manifold is obtained as a natural quotient of the skein module of the original manifold.
Specifically, there is a natural surjective homomorphism of stated skein modules, which we denote as
\[ q_S: \mathrm{Sk}(M) \to \mathrm{Sk}(M(\mathfrak{p},\mathfrak{q})). \]
The kernel of this map is generated by the relations induced by allowing skein elements to pass freely through the newly attached solid torus.

It is natural to expect that this topological Dehn filling operation has a corresponding algebraic counterpart on the square-root quantum gluing module.
That is, there should exist some quotient map $q_Q$ from $\mathrm{SQGM}_{\mathcal{T}}(M)$ to a reduced gluing module $\mathrm{SQGM}_{\mathcal{T}}(M)_{\mathfrak{p},\mathfrak{q}}$.
The proposed Dehn-filled quantum trace map is then the natural map making the resulting square commute:
\begin{conj} \label{conj:dehn_filled_trace}
Given an ideally triangulated, oriented $3$-manifold $M$ and a $(\mathfrak{p},\mathfrak{q})$ Dehn filling on a chosen boundary component $\partial_i M$, there exists a nontrivial quotient module $\mathrm{SQGM}_{\mathcal{T}}(M)_{\mathfrak{p},\mathfrak{q}}$ of the square-root quantum gluing module and a quotient map $q_Q: \mathrm{SQGM}_{\mathcal{T}}(M) \to \mathrm{SQGM}_{\mathcal{T}}(M)_{\mathfrak{p},\mathfrak{q}}$ such that the Dehn-filled quantum trace map 
\[ \Tr^{\mathfrak{p},\mathfrak{q}}_{\mathcal{T}}: \mathrm{Sk}(M(\mathfrak{p},\mathfrak{q})) \to \mathrm{SQGM}_{\mathcal{T}}(M)_{\mathfrak{p},\mathfrak{q}} \]
is well-defined and makes the following diagram commute:

\begin{equation*}
    \begin{tikzcd}
        \mathrm{Sk}(M) \arrow[r, "\Tr_{\mathcal{T}}"] \arrow[d, "q_S"'] & \mathrm{SQGM}_{\mathcal{T}}(M) \arrow[d, "q_Q"] \\
        \mathrm{Sk}(M(\mathfrak{p},\mathfrak{q})) \arrow[r, "\Tr^{\mathfrak{p},\mathfrak{q}}_{\mathcal{T}}"', dashed] & \mathrm{SQGM}_{\mathcal{T}}(M)_{\mathfrak{p},\mathfrak{q}}
    \end{tikzcd}
\end{equation*}
\end{conj}

Conjecture~\ref{conj:dehn_filled_trace} ties the present results back to the original idea of the length conjecture proposed in \cite{AGLR}.
Throughout this paper, the insertion appearing in our state integral is $\Tr_{\mathcal{T}}(K_b^{l})$, the standard $3$d quantum trace of $K_b^{l}$ viewed as a curve in the unfilled Whitehead link complement.
The original proposal of the length conjecture, on the other hand, calls for an insertion intrinsic to $S^3 \setminus \mathcal{K}_p$.
Assuming the conjecture, we have
\[
q_Q\!\left( \Tr_{\mathcal{T}}(K_b^{l}) \right) = \Tr^{1, p}_{\mathcal{T}}(K_b^{l}),
\]
expressing the (conjectural) quantum trace intrinsic to $S^3 \setminus \mathcal{K}_p$ in terms of the established $3$d quantum trace.

%%%%%%%%%%%%%%%%%%%%%%%%%%%%%%%%%%%%%%%%%%%%%%%%
\appendix 
%%%%%%%%%%%%%%%%%%%%%%%%%%%%%%%%%%%%%%
\section{Conventions on the quantum dilogarithm function}
\label{App:QDL}
%%%%%%%%%%%%%%%%%%%%%%%%%%%%%%%%%%%%%%%%%%%%%%%%%%%%%%%%%%%%%%

The quantum dilogarithm function $\psi_{\hbar}(Z)$ was introduced by Faddeev \cite{Faddeev:1993rs,Faddeev:1995nb}; we use the normalization adopted in the Chern-Simons context by \cite{Dimofte:2009yn}:
\begin{align}
\begin{split}
\psi_{\hbar}(Z) := 
\begin{cases}
\prod_{r=1}^{\infty} \frac{1 - {\sf{q}}^r e^{-Z}}{1 - \tilde{\sf{q}}^{-r+1} e^{-\tilde{Z} } }  \quad \text{if} \quad |{\sf{q}}| < 1\;,
\\
\prod_{r=1}^{\infty} \frac{1 - \tilde{\sf{q}}^r e^{-\tilde{Z}}}{1 - {\sf{q}}^{-r+1} e^{-Z } }  \quad \text{if} \quad |{\sf{q}}| > 1\;,
\end{cases}
\end{split}
\end{align}
with
\begin{align}
\begin{split}
{\sf{q}} := e^{\hbar}
,\qquad
\tilde{\sf{q}} := e^{-\frac{4\pi^2}{\hbar}}
\,,\qquad
\tilde{Z} := \frac{2\pi i Z}{\hbar }.
\end{split}
\end{align}
The function satisfies the following functional identities:
\begin{align}
\begin{split}
\psi_{\hbar}(Z+\hbar) = (1-e^{-Z}) \psi_{\hbar}(Z)
\,,\quad
\psi_{\hbar}(Z+2\pi i ) = (1-e^{-\tilde{Z}}) \psi_{\hbar}(Z)\;. \label{difference for QDL}
\end{split}
\end{align}
In the limit $\hbar \rightarrow 0$, $\psi_{\hbar}(Z)$ has the asymptotic expansion
\begin{align}
\begin{split}
\log \psi_{\hbar}(Z) \xrightarrow{\hbar\rightarrow 0}
\sum_{n=0}^{\infty} \frac{B_n \hbar^{n-1}}{n !} \text{Li}_{2-n}(e^{-Z})\;,
\label{eq:psi expansion}
\end{split}
\end{align}
where the branch cuts of the functions $\mathrm{Li}_{2}$ and $\mathrm{Li}_{1}$ appearing in \eqref{eq:psi expansion} are located along $\mathrm{Re}(Z)=0$ \cite{Dimofte:2012qj}. 

When $\mathrm{Im}(Z)\in[-\pi,0]$, the location of the branch cuts is irrelevant. $B_n$ stands for the $n$-th Bernoulli number with $B_1=1/2$. 
The Fourier transform of the quantum dilogarithm function  satisfies the following identity \cite{Faddeev:2000if,Ponsot:2000mt}:
\begin{align}
\int \frac{dZ}{\sqrt{2\pi \hbar}} e^{-\frac{Z U}\hbar}\psi_\hbar (Z) = e^{i \delta}e^{\frac{U^2-(2\pi i +\hbar)U }{2\hbar} +\frac{i \pi}{12} (\frac{\hbar}{2\pi i}+\frac{2\pi i}{\hbar})} \psi_\hbar (U) \label{Fourier of QDL},
\end{align}
with a constant overall phase factor $e^{i\delta}$.
%%%%%%%%%%%%%%%%%%%%%%%%%%%%%%%%%%%%%%%

\section{Feynman diagram expansions}\label{app:Feynmann}

Consider a parameter $\hbar$ and a Laurent-polynomial function $\mathcal{O}(y;\hbar)\in\mathbb{C}[[\hbar]][y_{1},\ldots,y_{N}]$, of the form
\begin{eqnarray}\label{operatorinserted}
\mathcal{O}(y;\hbar)=\sum_{\vec{a}}c_{\vec{a}}(\hbar)y^{\vec{a}},\qquad \vec{a}\in\mathbb{Z}^N,\quad y^{\vec{a}}:=\prod_{i=1}^{N}y_{i}^{a_{i}},
\end{eqnarray}
where $c_{\vec{a}}(\hbar)$ has a power series expansion,
\begin{eqnarray}
c_{\vec{a}}(\hbar)=\sum_{k\geq 0}\hbar^{k}c_{\vec{a},k},
\end{eqnarray}
and a potential function of $\ln(y_{i})$ (upon a choice of branch cut for the $\ln$),
\begin{eqnarray}\label{potfn}
V_{0}(Y),\qquad Y_{i}:=\ln(y_{i}).
\end{eqnarray}
Assume $V_{0}$ is independent of $\hbar$ and the critical points of $V_{0}(Y)$ are isolated, and choose one of them; call it $Y^{(c)}$. Then, define the integrals\footnote{The integrals that will appear in our formulae have an overall factor of the form $\exp\Gamma_{0}(Y^{(c)};\hbar)=\exp(\hbar^{-1}V_{0}(Y^{(c)})+O(\hbar^{0}))$, but this will not play any role on the subsequent computation in this section.}
\begin{eqnarray}
Z(y^{(c)},\mathcal{O}(y^{(c)};\hbar)):=\int \prod_{i=1}^{N} dY_{i}\mathcal{O}(y;\hbar)\exp\left(\frac{1}{2\hbar}\frac{\partial^{2}V_{0}(Y^{(c)})}{\partial Y_{i}\partial Y_{j}}Y_{i}Y_{j}+\sum_{k\geq 1}\frac{1}{k!}\Gamma_{i_{1},\ldots,i_{k}}(Y^{(c)},\hbar)Y_{i_{1}}\cdots Y_{i_{k}}\right),\nonumber
\end{eqnarray}
where the vertex functions $\Gamma_{i_{1},\ldots,i_{k}}(Y^{(c)},\hbar)$ take the form:
\begin{equation}\label{verticesG}
\begin{aligned}
\Gamma_{i}(Y^{(c)},\hbar) &= \Gamma^{(0)}_{i}(Y^{(c)})+O(\hbar)\\
\Gamma_{ij}(Y^{(c)},\hbar) &= \Gamma^{(0)}_{ij}(Y^{(c)})+O(\hbar)\\
\Gamma_{i_{1},\ldots,i_{k}}(Y^{(c)},\hbar) &= \hbar^{-1}\Gamma^{(-1)}_{i_{1},\ldots,i_{k}}(Y^{(c)})+O(\hbar^{0})
\end{aligned}
\end{equation}
We are then interested in computing the asymptotic expansion (i.e. the saddle point approximation around $Y=0$) of the ratio of integrals:
\begin{eqnarray}\label{expoperator}
\frac{Z(y^{(c)},\mathcal{O}(y^{(c)};\hbar))}{Z(y^{(c)},\mathbf{1})}=\exp\left(\sum_{k\geq 0}\hbar^{k}W_{k}(\mathcal{O}(y^{(c)}))\right),
\end{eqnarray}
where $W_{k}(\mathcal{O}(y^{(c)}))$ are functions of $y^{(c)}$, independent of $\hbar$. The functions $W_{k}(\mathcal{O}(y^{(c)}))$ are conveniently written in terms of the functions $\widetilde{S}^{\vec{a}}(y^{(c)})$, determined by the identity
\begin{eqnarray}
\frac{Z(y^{(c)},y^{\vec{a}})}{Z(y^{(c)},\mathbf{1})}=\exp\left(\sum_{k\geq 0}\hbar^{k}\widetilde{S}^{\vec{a}}_{k}(y^{(c)})\right),
\end{eqnarray}
and the propagator $\Pi_{i,j}$:
\begin{eqnarray}\label{propagator}
\Pi_{i,j}:=-\hbar(\mathcal{H}^{-1})_{i,j},\qquad \mathcal{H}_{i,j}:=\frac{\partial^{2}V_{0}(Y^{(c)})}{\partial Y_{i}\partial Y_{j}}.
\end{eqnarray}
The coefficients $\widetilde{S}^{\vec{a}}_{k}(y^{(c)})$ can be written using Feynman diagram techniques (see for instance \cite{polyak2005feynman}). We only need the terms for $k=0,1$:
\begin{eqnarray}\label{individualS}
\widetilde{S}^{\vec{a}}_{0}(y^{(c)})&=&a_{i}Y^{(c)}_{i}\nonumber\\
\widetilde{S}^{\vec{a}}_{1}(y^{(c)})&=&\mathrm{coeff}\Big\{\frac{1}{2}a_{i}\Pi_{ij}\Gamma_{jkl}\Pi_{kl}+\frac{1}{2}a_{i}\Pi_{ij}a_{j}+a_{i}\Pi_{ij}\Gamma_{j},\hbar\Big\},
\end{eqnarray}
then, explicit expressions for $W_{k}(\mathcal{O}(y^{(c)}))$, $k=0,1$ follow:
\begin{eqnarray}\label{expressW}
W_{0}(\mathcal{O}(y^{(c)}))&=&\ln\left(\sum_{\vec{a}}c_{\vec{a},0}(y^{(c)})^{\vec{a}}\right)\nonumber\\
W_{1}(\mathcal{O}(y^{(c)}))&=&\frac{\sum_{\vec{a}}(c_{\vec{a},1}+c_{\vec{a},0}\widetilde{S}^{\vec{a}}_{1}(y^{(c)}) )(y^{(c)})^{\vec{a}} }{\exp W_{0}(\mathcal{O}(y^{(c)}))}.
\end{eqnarray}
\bibliographystyle{alpha}
\bibliography{ref}

\end{document}